\newtheorem{theorem}{Theorem}[section]
\newtheorem{lemma}[theorem]{Lemma}
\newtheorem{Cor}[theorem]{Corollary}
\newcommand{\hphi}{\widehat{\phi}}
\newcommand{\hT}{\widehat{T}}
\newcommand{\hx}{\widehat{x}}
\newcommand{\hU}{\widehat{U}}
\newcommand{\hV}{\widehat{V}}
\newcommand{\hnu}{\widehat{\nu}}
\newcommand{\DnT}{D^T_\nu}
\newcommand{\DnhT}{D^{\hT}_{\hnu}}
\newcommand{\tU}{\widetilde{U}}
\newcommand{\tV}{\widetilde{V}}
\newcommand{\homega}{\widehat{\omega}}
\newcommand{\wrt}{w.r.t.\ }
\def\phi{\varphi}
\def\res{\!\!\upharpoonright\!}     
\def\+{\oplus}
\def\*{\odot}
\newcommand{\0}{\mathbf{0}}
\newcommand{\ra}{\rightarrow}
\newcommand{\ba}{\begin{eqnarray*}}
\newcommand{\ea}{\end{eqnarray*}}
\newcommand{\bes}{\begin{enumerate}\topsep=1.5mm \itemsep=-1mm}
\newcommand{\ee}{\end{enumerate}}
\newcommand{\calE}{\mathcal{E}}
\newtheorem{Definition}[theorem]{Definition}
\newcommand{\ov}{\overline}
\newcommand\leT{\leq_{\rm T}}
\newcommand{\nleT}{\nleq_{\rm T}}
\def\equivT{\equiv_{\rm T}}
\newcommand{\es}{\emptyset}
\def\diverges{\!\uparrow\,}
\def\converges{\!\downarrow\,}
\newcommand{\ce}{c.e.\ }
\newcommand{\al}{\alpha}
\newcommand{\h}{\hat}
\newcommand{\wh}{\widehat}
\newcommand{\hA}{{\widehat A}}
\newcommand{\DnA}{D^A_\nu}
\newcommand{\DnhA}{D^{\hA}_{\hnu}}
\newcommand{\Fbp}{\scrF_{\beta}^+}
\newcommand{\Wfi}{W_{f(i)}}
\newcommand{\Wgi}{W_{g(i)}}
\newcommand{\scrF}{\mathcal{F}}
\def\<{\langle}
\def\>{\rangle}
\newcommand{\Vhat}{\widehat{V}}
\newcommand{\Uhat}{\widehat{U}}
\def\om{\omega}
\newcommand{\LG}{\mathcal L^\mathcal G}
\begin{document}
\title{Computably Enumerable Sets that are Automorphic to Low Sets}
\author{Peter Cholak and Rachel Epstein}

\date{} 

\maketitle


\begin{abstract}

  We work with the structure consisting of all computably enumerable (c.e.) sets
  ordered by set inclusion.  The question we will partially
  address is which c.e.\ sets are autormorphic to low (or low$_2$)
  sets.  Using work of R.\ Miller \cite{MR1933396}, we can see that every
  set with semilow complement is $\Delta^0_3$ automorphic to a low
  set. While it remains open whether every set with semilow complement
  is effectively automorphic to a low set, we show that there are sets
  without semilow complement that are effectively automorphic to low
  sets. 
  We also consider other lowness notions such as having a
  semilow$_{1.5}$ complement, having the outer splitting property,
  and having a semilow$_2$ complement.  We show that in every nonlow
  \ce degree, there are sets with semilow$_{1.5}$ complements without
  semilow complements as well as sets with semilow$_2$ complements and
  the outer splitting property that do not have semilow$_{1.5}$
  complements.
    We also address the question of which sets are
  automorphic to low$_2$ sets.

\end{abstract}

\section{Introduction}

Our domain of discourse is the collection of all c.e.\ sets under
inclusion.  This structure is called $\mathcal{E}$. By adding
intersection, union, the empty set and $\omega$, this structure is a
lattice.  We should also mention that these binary operations in addition to
$\emptyset$, $\omega$, and being computable and finite are definable in
this structure.  Thus, if we have an automorphism of $\mathcal{E}$ then
computable sets must go to computable sets and infinite sets to
infinite sets.

If we take the quotient of $\calE$ modulo the ideal of finite sets, we get the lattice $\calE^*$ of \ce
sets up to finite difference.  Soare \cite[XV 2.5]{Soare:87}
showed that if there is an automorphism of $\calE^*$ taking $A$ to
$B$, then there is one of $\calE$ as well, so we can work in
$\calE^*$ to show that automorphisms of $\calE$ exist.  

A c.e.\ set $W$ is \emph{low} if and only if its jump, $W'$, is Turing
equvialent to $\emptyset'$.  As we will discuss below, the low sets are
special within $\mathcal{E}$.  Our goal is to understand as best
possible which sets look like and behave like low sets in this
structure. That is, when does a set have a low set in its orbit?

The first result in this vein is a result of Soare \cite{Soare:82}
following \cite{Soare:74}.
Soare showed that if a \ce set $A$ is low then its collection of c.e.\
supersets of $A$ under inclusion is isomorphic to $\mathcal{E}$.
Formally, $\mathcal{L}(A) = \{ W_e \cup A \mid e \in \omega\}$ is
isomorphic to $\mathcal{E}$.  This suggests that low sets are similar to
computable sets.  Again, we can work modulo the finite sets, showing that the 
quotient of $\mathcal{L}(A)$ modulo the finite sets, called $\mathcal{L}^*(A)$, is isomorphic to $\calE^*$.  

Actually Soare proved a stronger result.  But for that we need a few
definitions.  First a set $X$ is \emph{semilow} if and only if
$\{e \mid W_e \cap X \neq \emptyset\} \leq_T \mathbf{0'}$. This index set
is $\Sigma^X_1$. If $\overline{X}$ is c.e.\ and low then this index
set is $\Delta^0_2$.  So if $A$ is low then $A$ has a semilow
complement. Now consider an isomorphism $\Phi$ between
$\mathcal{L}^*(A)$ (or $\mathcal{E}^*)$ and $\mathcal{E}^*$.  There are
functions $g$ and $h$ such that $\Phi(W_e \cup A) = W_{g(e)}$ and
$\Phi^{-1}(W_e) = W_{h(e)} \cup A$ (or $\Phi(W_e) = W_{g(e)}$ and
$\Phi^{-1}(W_e) = W_{h(e)} $).  If computable ($\Delta^0_3$) $g$ and
$h$ can be found for $\Phi$, then we call $\Phi$ \emph{effective}
($\Delta^0_3$).  It is standard to use the terms ``effective'' or ``$\Delta^0_3$'' to describe isomorphisms between $\mathcal{L}(A)$ or $\calE$ and $\calE$,
even though the isomorphisms produced may only be effective or $\Delta^0_3$ on the quotient spaces $\mathcal{L}^*(A)$ and $\calE^*$.  This is the way we will use 
these terms in this paper. 

Soare \cite{Soare:82} showed that if $A$ is c.e.\ set with semilow complement then
$\mathcal{L}^*(A)$ is effectively isomorphic to $\mathcal{E}^*$.  There have
been several improvements on this result. We observe, using
work of R.\ Miller \cite{MR1933396}, that this can be improved to if $A$ is a
c.e.\ set with semilow complement then $A$ is $\Delta^0_3$
automorphic to a low set (see Section~\ref{semilow}). 
  This means there
is a $\Delta^0_3$ automorphism $\Phi$ such that
$\Phi(A) = \widehat{A}$ is low.  One recent open question raised by
Soare (personal communication) is whether the above $\Delta^0_3$ can be
replaced by effective.  Another question raised by Soare is to
characterize the sets which are effectively automorphic to low sets. It was
thought that perhaps a characterization would be all sets which have
semilow complements.  But, in Section~\ref{notsemiloweff}, we show
that this is not the case, as there are sets without semilow complements
that are effectively automorphic to low sets.  

Soare's result about sets with semilow complements was first improved
by Maass \cite{Maass:83}.  Maass showed that for any c.e.\ set $A$
with semilow$_{1.5}$ complements that $\mathcal{L}(A)$ is $\Delta^0_3$
isomorphic to $\mathcal{E}$.  A set $X$ is \emph{semilow$_{1.5}$} if
and only if the index set $\{ e \mid W_e \cap X $ is
finite$\} \leq_1 \mathbf{\emptyset''}$.  By an index set argument, we
know that if $A$ is effectively automorphic to a low set $\widehat{A}$
it must have semilow$_{1.5}$ complement ($W_e \cap A \neq^* \emptyset$
if and only if $W_{g(e)} \cap \widehat{A} \neq^* \emptyset$ and if
$\widehat{A}$ is low then the latter is $\Pi^0_2$).

Now Harrington and Soare \cite[Section 5]{MR1640265} show that there
is a property $NL(A) $ definable in the structure $\mathcal{E}$ such
that if $NL(A)$ holds then $A$ does not have semilow complement.
Morever they showed that there is a set $A$ with semilow$_{1.5}$
complement and $NL(A)$. Thus we know that not all sets $A$ with
semilow$_{1.5}$ complement can be automorphic to a low set.  But can
they be automorphic to low$_2$ sets?

We thought we had a positive answer but $NL(A)$ for nonlow$_2$ $A$
seems to be a barrier.  $NL(A)$ does provide a barrier for the
question about whether two promptly simple sets with semilow
complements are automorphic.  This barrier is discussed in Section~5.4
of \cite{MR1640265}.  (Our situation is similar. The issue seems to be
that we cannot get $\widehat{A}$ to cover $A$ in real time.  Even
though we know a state $\nu$ is emptied into $A$, it is emptied slowly
through a series of moves into RED and BLUE sets.  On the $\widehat{A}$
side we have to match this series of BLUE and RED moves and hence we
cannot quickly cover $A$. Another problem here is that depending
on $e$ the series of RED and BLUE moves can change.  The series of
moves does not just depend on $\nu$.)

There was one more extension of Soare's and Maass's work.  Cholak
\cite{mr95f:03064} showed that if $A$ has the outer splitting property
and has semilow$_2$ complement then $\mathcal{L}(A)$ is isomorphic to
$\mathcal{E}$.  $A$ has the \emph{outer splitting property} if and
only if there are computable functions $g$ and $h$ such that for all
$e$, $W_e = W_{g(e)} \sqcup W_{h(e)}$,
$W_{g(e)} \cap \overline{A} =^* \emptyset$ and if
$W_e \cap \overline{A}$ is infinite then $W_{g(e)} \cap \overline{A} $
is nonempty.  Maass \cite[Lemma 2.3]{Maass:83} shows in a very clever
argument that if $\overline{A}$ is semilow$_{1.5}$ then $A$ has the
outer splitting property.  $X$ is \emph{semilow$_2$} if and only if
the index set $\{ e \mid W_e \cap X $ is
infinite$\} \leq_T \mathbf{\emptyset''}$.  At one time we thought we
could show that if $A$ has the outer splitting property and has
semilow$_2$ complement then $A$ is automorphic to a low$_2$ set.  Note
that if $A$ is automorphic to a low$_2$ then it has semilow$_2$
complement.

There is also some related recent work of Epstein, \cite{MR3003266}.
Epstein shows that there is a properly low$_2$ degree such that every
c.e.\ set in that degree is automorphic to a low set.  We were
wondering if that could be shown more modularly.  We wondered if there
is some property $P$ such that every set with property $P$ is automorphic
to a low set and there is some properly low$_2$ degree such that every set 
in that degree has
property $P$.  One reasonable candidate for $P$ would be having
semilow complement.  But Soare \cite[IV 4.10]{Soare:87} shows that
every nonlow degree contains a c.e.\ set whose complement is not
semilow (via a nice index set argument).  Other later results rule
out other possible $P$'s.

Downey, Jockusch, and Schupp \cite[Theorems~4.6 and 4.7]{MR3125901}
showed that every nonlow degree contains a c.e.\ $A$ without the outer
splitting property (so $A$'s complement is not semilow$_{1.5}$).  In related
results, 
 we show that every nonlow
degree contains a c.e.\ set $A$ whose complement is semilow$_{1.5}$
but not semilow (see Section~\ref{semilow15}) and a c.e.\ set $A$
whose complement is semilow$_2$ but not semilow$_{1.5}$ and has the
outer splitting property (see Section~\ref{semilow2OSP}).  We also
provide a nice index set argument that every nonlow$_2$ degree
contains a c.e.\ set $A$ whose complement is not semilow$_2$.

We should mention that it has been long known that if a degree is
nonlow$_2$ it must contain a c.e.\ set which is not automorphic to a
low$_2$ set.  Lachlan \cite{Lachlan:68} showed, using a true stages
construction, that every low$_2$ set has a maximal superset while
Shoenfield \cite{Shoenfield:76} showed that every nonlow$_2$ degree
contains a c.e.\ set with no maximal superset.  These two results have
been generalized by Cholak and Harrington \cite{mr2003h:03063}.  One
corollary of the work by Cholak and Harrington is that if $\mathbf{a}$
and $\mathbf{b}$ are two c.e.\ degrees and
$\mathbf{a''} \nleq_T \mathbf{b''}$, then $\mathbf{a}$ contains a
c.e.\ set not automorphic to anything computable from $\mathbf{b}$. It
is open if this can be improved to show that $\mathbf{a}$ contains a
c.e.\ set not automorphic to anything whose double jump is computable
from $\mathbf{b''}$.

We mention one more open related question: if $A$ is low$_2$ then is
$\mathcal{L}(A)$ isomorphic to $\mathcal{E}$?  We now know that there
is a properly low$_2$ set without the outer splitting property.  Thus,
a positive result here may not use the outer
splitting property and is very likely to use the a \emph{true stage}
construction in the style of Lachlan \cite{Lachlan:68}.

\section{C.e.\ sets with semilow complements are automorphic to low}

\label{semilow}

Soare \cite{Soare:82} showed that if $A$ is a c.e.\ set with semilow
complement then $\mathcal{L}(A)$ is effectively isomorphic to
$\mathcal{E}$.  It has been conjectured that in fact any \ce set $A$
with semilow complement is effectively automorphic to a low set.  Here
we show that it is possible to modify a proof of R. Miller
\cite{MR1933396} to show that every \ce set with semilow complement
can be taken by a $\Delta^0_3$ automorphism to a low set.

\begin{theorem}\label{Millers}[R. Miller, \cite{MR1933396}, Theorem 1.1.1] For every \ce set $A$ with semilow complement and every noncomputable \ce set
$C$, there exists a $\Delta^0_3$ automorphism of $\calE$ mapping $A$ to a set $B$ such that
$C\nleT B$.
\end{theorem}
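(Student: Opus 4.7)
The plan is to extend the effective isomorphism construction of Soare that shows $\mathcal{L}^*(A) \cong \mathcal{E}^*$ when $\overline{A}$ is semilow (and its extendability to an automorphism of $\mathcal{E}$) in two ways: first, build the image $B$ simultaneously as a fresh c.e.\ set with flexibility in choosing which specific numbers enter $B$; second, weaken the isomorphism to $\Delta^0_3$ so that $\emptyset''$ may be used as an oracle, and use this extra oracle power to meet cone-avoidance requirements $R_e : \Phi^B_e \neq C$ for every $e$.

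First I would set up the automorphism skeleton. For each level $n$ and each $n$-state $\nu$, maintain a matching between the $A$-side elements in state $\nu$ and the $B$-side elements in the matching $B$-state. As an $A$-side element $x$ changes state (by entering some $W_e$, or by entering $A$), its $B$-side partner is driven by the construction into the corresponding $\hat{W}_{g(e)}$, or into $B$. Fresh large $B$-side witnesses are introduced to match new $A$-states. Because $\overline{A}$ is semilow, the $\Sigma^A_1$ index set $\{e : W_e \cap \overline{A} \ne \emptyset\}$ is $\Delta^0_2$, and lifting to $\emptyset''$ gives us enough look-ahead to decide, at each stage, which $B$-side elements can have their states tentatively committed and which should be held as free reserves.

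Second I would add the cone-avoidance module. Order the $R_e$ in priority. The strategy for $R_e$ picks a fresh witness $x_e$ and, whenever it sees $\Phi^B_{e,s}(x_e)\!\downarrow = C_s(x_e)$, imposes a restraint $r_e = \phi^B_{e,s}(x_e)$ on the $B$-side: no element $\le r_e$ may be put into $B$ while the restraint stands. If $x_e$ later enters $C$ while $r_e$ has been honored since some stage $s_0$ at which $\Phi^B_{e,s_0}(x_e) = 0$, then $\Phi^B_e(x_e) = 0 \ne 1 = C(x_e)$ and $R_e$ is met; otherwise, either the restraint is permanent (and the $0$-computation is preserved, contradicting $x_e \in C$), or $\Phi^B_e(x_e)$ diverges, or infinitely often a fresh witness is picked and one of them succeeds by a standard argument exploiting noncomputability of $C$.

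The main obstacle, as in every automorphism-plus-preservation argument, is reconciling the two modules: the skeleton may be forced to move a specific $B$-side partner $y \le r_e$ into $B$ or into some $\hat{W}_e$ in order to keep the state counts balanced when an $A$-side move happens. The key to overcoming this is the $\emptyset''$-flexibility: using the oracle I would refuse to commit any $B$-side element to a state that is not yet $\emptyset''$-confirmed as final for the $A$-side partner, keeping an unbounded reserve of uncommitted large $B$-side candidates that can be slotted into the matching state above all currently active restraints; a semilow-complement argument (as in Soare's and Miller's original proofs) guarantees this reserve is $\emptyset''$-uniformly identifiable. A finite-injury priority argument then shows each $R_e$ is injured finitely often and eventually establishes a permanent restraint, while the state-count invariants are preserved, yielding the $\Delta^0_3$ automorphism $\Phi$ with $\Phi(A) = B$ and $C \nleq_T B$. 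The reason the construction is not effective is precisely that committing $B$-side elements to states requires $\emptyset''$-lookahead to absorb the extra injury from the $R_e$'s.
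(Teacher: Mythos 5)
This theorem is quoted from R.\ Miller's thesis \cite{MR1933396}; the paper does not reprove it and gives only a one-paragraph description of Miller's method before modifying it for Theorem~\ref{semilowlow}. That description says Miller works directly inside the Harrington--Soare $\Delta^0_3$ automorphism machinery of \cite{Harrington.Soare:96}, building the automorphism on a tree, with the two new pieces being the list $\LG$ that records which $\alpha$-states elements flow into $A$ from, and a ``Step~$\hat 0$'' that enumerates fresh large elements into $B$ in the corresponding hatted states while respecting the cone-avoidance restraint. Your plan is a genuinely different route: you want to start from Soare's effective isomorphism $\mathcal L^*(A)\cong\mathcal E^*$ and extend it (Extension-Theorem style) to an automorphism of $\calE$, grafting cone avoidance on top. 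That framing is closer to what Section~\ref{notsemiloweff} of this paper does, but there it is used \emph{without} cone avoidance, for a different theorem, and it is not at all clear that the exact-state-matching discipline of the Extension Theorem survives once Sacks-type restraints are competing for the same $B$-side elements. So this is not the paper's route, and it is not obviously a working one either.

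More concretely, your cone-avoidance module as written does not meet $R_e$. You pick a single witness $x_e$, wait for $\Phi^B_e(x_e)[s]\!\downarrow=0=C_s(x_e)$, restrain, and ``hope'' $x_e$ later enters $C$; but $C$ is a fixed \ce set you do not control, so if $x_e$ never enters $C$ the preserved computation yields \emph{agreement} $\Phi^B_e(x_e)=0=C(x_e)$, not a diagonalization, and your ``contradicting $x_e\in C$'' clause is vacuous. What is actually needed is Sacks preservation: track the length of agreement $\ell(e,s)=\max\{n:(\forall y<n)\,\Phi^B_e(y)[s]\!\downarrow=C_s(y)\}$, restrain $B$ on the uses of these computations, and argue that if $\liminf_s\ell(e,s)=\infty$ then $C$ is computable (read off $C(y)$ at the first stage where agreement passes $y$ and the restraint below the relevant uses has settled), contradicting noncomputability of $C$. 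Your aside about ``infinitely often a fresh witness is picked \ldots by a standard argument'' is presumably a gesture at this, but the strategy you actually described is the wrong one, and the one-line appeal to $\emptyset''$-identifiable reserves does not yet reconcile the Sacks restraint with the obligation (inherited from semilowness of $\overline A$) to keep the gateway-state flow into $B$ matched to the flow into $A$ --- that reconciliation is precisely the technical content Miller's $\LG$ and Step~$\hat 0$ supply.
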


R.\ Miller states this theorem differently, saying that $A$ is a low set instead of a set with semilow complement.  However, he mentions that the construction only requires that $A$ have semilow complement.

We modify R.\ Miller's proof to get the following theorem.

\begin{theorem}\label{semilowlow} For every \ce set $A$ with semilow complement, there exists a $\Delta^0_3$ automorphism of $\mathcal E$ mapping $A$ to a low set $B$.
\end{theorem}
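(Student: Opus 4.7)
The plan is to adapt R.~Miller's construction underlying Theorem~\ref{Millers} by replacing its single diagonalization requirement ``$C \nleT B$'' with the standard family of lowness requirements $L_e$, one for each $e$: preserve the computation $\Phi_e^B(e)$ by restraining enumeration into $B$ below its use once it converges. If every $L_e$ is met, then using the $\Delta^0_3$ index of $B$ and an oracle for $\emptyset'$ one can decide $B'$ by searching for a preserved convergent computation or confirming permanent divergence, and hence $B$ is low.

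Recall that Miller builds $B = \Phi(A)$ by the usual $e$-state matching technique in the Soare tradition, augmented by a module that uses $\emptyset''$ to preserve long initial segments of $B$ in order to spoil each potential reduction $\Phi_e^B = \chi_C$. The semilowness of $\overline{A}$ is what reconciles that restraint module with the state-matching: since $\emptyset'$ decides $\{e : W_e \cap \overline{A} \neq \emptyset\}$, the construction can see in advance which full states on the $A$-side are nonempty and can therefore choose matching witnesses on the $B$-side that lie above any finite restraint currently imposed.

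The modification I propose is to replace the single $C$-module by a priority list of modules $L_0, L_1, L_2, \ldots$. The module $L_e$ sets a restraint equal to the current use $\phi_e^B(e)[s]$ whenever $\Phi_e^B(e)[s]\!\downarrow$, and forbids lower priority strategies from enumerating numbers below that use into $B$. The $\Delta^0_3$ construction uses $\emptyset''$ to determine, for each $e$, whether the restraint imposed by $L_e$ has a finite $\liminf_s \phi_e^B(e)[s]$, i.e., whether $\Phi_e^B(e)\!\downarrow$; in either outcome $L_e$ is satisfied, and the construction proceeds on the corresponding branch of the priority tree, just as Miller's does for its single $\Pi_2$ branching.

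The main obstacle is the familiar conflict between lowness and enumeration: collectively the restraints of the $L_e$ might forbid all the small elements that state-matching would use to realize a given full state on the $B$-side. I would address this exactly as Miller does, exploiting the semilowness of $\overline{A}$ to shift matching witnesses upward past any finite restraint, together with the observation that each $L_e$ whose restraint stabilizes only blocks a single initial segment and hence only diverts finitely many targets at each priority level. The remaining verification, that this shifted matching still yields a genuine automorphism of $\calE$ rather than only an isomorphism of $\calL^*(A)$ with $\calE^*$, is carried out by the same bookkeeping Miller uses, now applied to the $L_e$-tree in place of the original single $C$-branch. The outcome is a $\Delta^0_3$ automorphism of $\calE$ sending $A$ to a low set $B$, as desired.
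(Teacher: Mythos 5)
Your high-level strategy is the right one and matches the paper's: keep Miller's construction intact and swap the ``$C \nleT B$'' restraint module for a lowness-preservation restraint, leaning on the semilowness of $\overline{A}$ so that state-matching witnesses on the $B$-side can always be pushed above whatever finite restraint is in force. Where you diverge from the paper is in the implementation, and your version is substantially heavier than necessary. You propose to hang the requirements $L_e$ on the priority tree as new $\Pi_2$-branching nodes whose outcomes are decided by $\emptyset''$. The paper does not touch the tree at all. Instead it makes two surgical changes to Miller's machinery: (i) the list $\LG$ is upgraded from pairs $\langle\alpha,\hnu\rangle$ to triples $\langle\alpha,\hnu,x\rangle$, which serves to give each pending enumeration into $B$ a canonical code; and (ii) Step $\hat0$ --- the \emph{only} step that ever enumerates into $B$ --- is rewritten so that an element $\widehat{y}$ may enter $B$ to service the triple $\langle\alpha,\hnu_0,x\rangle$ only if $\widehat{y}$ exceeds every use $\phi_i^B(i)[s]$ with $i < \langle\alpha,\hnu_0,x\rangle$. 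Lowness then drops out of a short finite-injury argument (the paper's Lemma~\ref{Blow}): once every triple with code at most $i$ has either been serviced or abandoned, no later enumeration into $B$ can injure a convergent $\Phi_i^B(i)[s]$, so the jump of $B$ is limit computable. No new tree nodes, no new $\emptyset''$ queries beyond what Harrington--Soare already uses, and no interaction problems between a fresh family of $L_e$-nodes and the existing automorphism tree. Your approach would likely work, but it requires re-verifying compatibility of the added $L_e$-branching with the delicate Harrington--Soare tree dynamics, whereas the paper's version localizes all the new bookkeeping inside the one step that can hurt $B'$.

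Two smaller points worth flagging. First, your framing of Miller's $C$-diagonalization as ``a single $\Pi_2$ branching'' understates it: Miller has infinitely many requirements $\Phi_e^B \ne C$, and the restraint is already handled inside Step $\hat0$ rather than as a dedicated branch, which is exactly why the paper's modification is natural. Second, your closing worry about obtaining an automorphism of $\calE$ rather than merely an isomorphism of $\calL^*(A)$ with $\calE^*$ is misplaced here: the whole construction produces an automorphism of $\calE^*$, which lifts to $\calE$ by Soare's result cited in the introduction; the structure $\calL^*(A)$ is not what is being built in this theorem.
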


\begin{proof} Here we discuss the minor modifications to the proof of
  Theorem \ref{Millers} that result in a proof of Theorem
  \ref{semilowlow}.  Because our modifications are minor and the original proof using the
  complex Harrington-Soare automorphism construction of \cite{Harrington.Soare:96}, we will not
  reproduce R.\ Miller's proof here.
Instead we briefly sketch the proof and refer the reader of this section to Theorem \ref{Millers} in \cite{MR1933396} for more details.

To prove Theorem \ref{Millers}, R. Miller builds an automorphism on a tree, as in Harrington-Soare \cite{Harrington.Soare:96}, which takes a given set $A$ to a constructed set $B$ with the desired property that $C\nleT B$.  The primary challenge of the theorem is to allow enough flow of elements into $B$ to match the flow of elements into $A$ while simultaneously restraining elements from $B$ so that $B$ cannot compute $C$.  There are two key components of the construction.  The first is a list $\LG$ that keeps track of the states of elements flowing into $A$ so that we can ensure that if infinitely many elements flow into $A$ in a given state, then infinitely many will flow into $B$ in the matching state.  The second key component is Step $\hat0$, which enumerates elements into $B$ that are in the appropriate states.  Step $\hat0$ only allows elements to enter if they are large enough to preserve a given restraint.  In our construction, we modify $\LG$ and Step $\hat0$ to reflect our new restraint, but little else is changed.

In R.\ Miller's construction, Step $\hat0$ was the only step that involved putting elements into $B$.  He needed to guarantee the proper flow of elements into $B$, while preserving restraint that would ensure that $B$ would not be able to compute $C$.  In our modified construction, we just need to preserve a different restraint.  In fact, this part of the construction can be done as in Step B of Epstein's proof in \cite{MR3003266} that there is a nonlow degree such that every set in that degree is $\Delta^0_3$ automorphic to a low set.  

In order to make this modification, we first must alter the list $\LG$.  In R.\ Miller's construction, pairs $\<\al,\hnu\>$ are added to the list whenever an element $x$ enters $A$ from the $\alpha$-state $\nu$.  (Note that $\hnu$ is the corresponding state on the $\ov B/B$ side of the construction.)  In our modification, we will instead add the triple $\<\al, \hnu, x\>$ to the list $\LG$ whenever $x$ enters $A$ from the $\alpha$-state $\nu$.  Note that we can identify each triple with a number in $\omega$.

Next, we replace R.\ Miller's Step $\hat0$ with the following new Step $\hat0$.\\

\noindent{\bf Step $\hat0$}: (Moving elements into B.)

Find the first triple $\<\al, \hnu_0, x\>$ in $\LG$ such that there is a $\wh y\in \wh\omega$ that has never before caused action on this step satisfying all of the following:

($\hat0.1$) $\alpha$ is consistent;

($\hat0.2$) $\wh y\in R_{\al, s}$;

($\hat0.3$) $\hnu(\al,\wh y, s)=\hnu_0$; and

($\hat0.4$) for all $i<\<\alpha, \hnu_0, x\>$, $\phi^B_{i}(i)[s]\converges\implies \phi^B_i(i)[s]<\wh y$.

\medskip
\noindent
{\bf Action.}  If $\<\al, \hnu_0, x\>$ is not checked, check $\<\alpha,x,e\>$, and do not enumerate $\wh y$ into $B$.    If $\<\al, \hnu_0, x\>$ has been checked already, enumerate $\wh y$ into $B$ and remove $\<\al, \hnu_0, x\>$ from the list $\LG$. This will leave infinitely many elements in $\ov B$, while still matching the flow into $A$.

The purpose of this step is essentially the same as in the original construction.  It creates a flow of elements into $B$ matching the flow into $A$, while also respecting the restraint of the negative requirements and ensuring that infinitely many elements remain outside of $B$.

Most of the lemmas in the proof of Theorem \ref{Millers} in \cite{MR1933396} could be kept exactly the same.  Lemmas 3.1.3 and 3.3.4 would need only very minor and straightforward changes, reflecting how the new construction still guarantees that $\ov B$ is infinite and that Step $\hat0$ is able to enumerate an element for every $\<\al, \hnu_0,x\>$ on $\LG$, with $\alpha$ on the true path, which works essentially the same as before, but with the old restraint replaced by the new one.

The only significant difference in the verification would be to replace Lemmas 3.3.1 and 3.3.2 with the following lemma, which is the same as Lemma 8.17 \mbox{in \cite{MR3003266}.}

\begin{lemma}\label{Blow}
The set $B$ is low.
\end{lemma}
\begin{proof}
Suppose there exist infinitely many $s$ such that $\Phi^B_i(i)[s]\converges$.  

Let $s_0$ be the least $s$ such that for all $\<\al,\hnu, x\>\leq i$, $\<\al,\hnu, x\>$ has either been removed from the list $\LG$ already or will never be removed from the list (this can happen if it is never added to the list, or if it is added but never matched).  Since each $\<\al,\hnu, x\>$ can enter $\LG$ only once, then after stage $s_0$, no $\wh y$ will enter $B$ in order to match $\<\al,\hnu, x\>$.  Let $s>s_0$ be some stage with $\Phi^B_i(i)[s]\converges$.  Then by ($\hat0.4$), nothing can enter $B$ below the use of this computation.  So $(\forall t>s)$ $[\Phi^B_i(i)[t]\converges]$.  So either $(\forall^\infty s)[\Phi^B_i(i)[s]\converges]$ or $(\forall^\infty s)[\Phi^B_i(i)[s]\diverges]$.  Thus, B is low.
\end{proof}

This completes the modification of R.\ Miller's proof to show that every \ce set with semilow complement is $\Delta^0_3$ automorphic to a low set.

\end{proof}

It remains open whether every \ce set with semilow complement is effectively automorphic to a low set.  In the next section, we show that there are sets without semilow complement that are effectively automorphic to low sets.  

\section{Effectively automorphic to low but not semilow}

\label{notsemiloweff}

In Theorem \ref{Q5}, we build an effective automorphism of
$\mathcal E^*$ that takes a set $A$ without semilow complement to a
low set $\ov A$.  To build an effective automorphism, we use Soare's
Effective Extension Theorem.

The main tool in constructing an automorphism of $\calE^*$ is matching
infinite $e$-states.  Suppose we are given listings of all the \ce
sets modulo finite difference, $\{U_e\}_{e\in\omega}$ and
$\{V_e\}_{e\in\omega}$, and an invertible map $\Theta$ of $\calE^*$
that takes $U_e$ to $\hU_e$ and $\Theta^{-1}$ takes $V_e$ to $\hV_e$.
The {\em $e$-state} of an element $x$ tells us which \ce sets $U_i$
and $\wh{V_i}$, or $\wh{U_i}$ and $V_i$ contain the element $x$, for
all $i\leq e$.  The map $\Theta$ is an automorphism of $\calE^*$ if
there are infinitely many elements in an $e$-state $\nu$ with respect
to $U_i$ and $\hV_i$ if and only if there are infinitely many elements
in the corresponding $e$-state $\hnu$ with respect to $\hU_i$ and
$V_i$.

More formally, we consider two copies of $\omega$, which we will refer
to as $\omega$ and $\homega$.  We imagine that our automorphism is
given by a permutation of $\omega$, which can be represented as a
function from $\omega$ to $\homega$.  The $e$-state of an element
$x\in \omega$ at stage $s$ is given by the triple
$\nu(e,x,s)=\<e,\sigma(e,x,s),\tau(e,x,s)\>$, where
$\sigma(e,x,s)=\{ i\leq e \mid x\in U_{i,s}\}$ and
$\tau(e,x,s)=\{ i\leq e \mid x\in \hV_{i,s}\}$.  The $e$-state of an
element $\hx\in\homega$ is determined the same way, except with
$\hU_{i,s}$ replacing $U_{i,s}$ and $V_{i,s}$ replacing $\hV_{i,s}$.
The final $e$-state of an element is
$\nu(e,x)=\<e,\sigma(e,x),\tau(e,x)\>$, where
$\sigma(e,x)=\{ i\leq e \mid x\in U_{i}\}$ and
$\tau(e,x)=\{ i\leq e \mid x\in \hV_{i}\}$, and similarly for
$\hx\in\homega$.

To see that $\Theta$ is an automorphism, it suffices to show that
\begin{eqnarray}
  \label{eq_aut}
  &(\forall \nu)(\exists^{\infty}x \in \om)[\nu(e,x) = \nu \ \wrt
    \{U_n\}_{n \in \om} \mbox{ and } \{\Vhat_n\}_{n\in \om}] \\
  &\Longleftrightarrow (\exists^{\infty}\h{y} \in \h{w})[\nu(e,\h{y}) =
    \nu \ \wrt \{\Uhat_n\}_{n\in \om} \mbox{ and } \{V_n\}_{n\in \om}].
    \nonumber
\end{eqnarray}

The theorem stated below is actually a special case of Soare's
Extension Theorem [1974].  The full version is stronger than is needed
for this paper.  
Recall that for an given enumeration of two c.e.\ sets $U$ and $V$,
$U \backslash V = \{ x | \exists s [ x \in (U_{s+1}- V_s)]\}$, $U$
\emph{before} $V$ and $U \searrow V = (U \backslash V) \cap V$, $U$
\emph{before} $U$ and \emph{then} $V$.

\begin{theorem}[The Extension Theorem](Soare[1974])\label{ET} Let $A$
  and $\hA$ be infinite \ce sets, and let $\{U_n\}_{n\in\omega}$,
  $\{V_n\}_{n\in\omega}$, $\{\hU_n\}_{n\in\omega}$, and
  $\{\hV_n\}_{n\in\omega}$ be computable arrays of \ce sets.  Suppose
  there is a simultaneous computable enumeration of all of the above
  such that $\hA\searrow \hU_n=\es=A\searrow \hV_n$ for all $n$.  For
  each full $e$-state $\nu$, define

$$D_\nu^A=\{x\ : \ x\in A_{\text{at } s+1} \text{ and } \nu=\nu(e,x,s) \text{ w.r.t. } \{U_{n,s}\}_{n,s\in\omega} \text{ and } \{\hV_{n,s}\}_{n,s\in\omega}\}$$

and

$$\DnhA=\{\hx\ : \ \hx\in \hA_{\text{at } s+1} \text{ and } \hnu=\nu(e,\hx,s) \text{ w.r.t. } \{\hU_{n,s}\}_{n,s\in\omega} \text{ and } \{V_{n,s}\}_{n,s\in\omega}\}.$$

$\DnA$ is the set of all elements that enter $A$ from $e$-state $\nu$,
and similarly for $\DnhT.$

Suppose that the simultaneous enumeration satisfies:

\begin{equation}\label{DnA}  
  (\forall \nu)[\DnA \text{ infinite}\iff  \DnhA \text{ infinite}].
\end{equation}

Then there is a computable enumeration of \ce sets
$\{\tU_n\}_{n\in \omega}$ and $\{\tV_n\}_{n\in\omega}$, where $\tU_n$
extends $\hU_n$ and $\tV_n$ extends $\hV_n$ such that
$\hU_n=\tU_n\backslash \hA$, $\hV_n=\tV_n\backslash A$, and
\begin{eqnarray}\label{auto}
  \nonumber \exists^\infty x \in A \text{ with final $e$-state $\nu$ w.r.t. } \{U_{n,s}\}_{n,s\in\omega} \text{ and } \{\tV_{n,s}\}_{n,s\in\omega}\\
  \iff\quad\quad\quad\quad\quad\quad\quad\quad\quad\quad\quad\quad\quad\\
  \nonumber \exists^\infty \hx \in \hA \text{ with final $e$-state $\hnu$ w.r.t. } \{\tU_{n,s}\}_{n,s\in\omega} \text{ and } \{V_{n,s}\}_{n,s\in\omega}.
\end{eqnarray}
\end{theorem}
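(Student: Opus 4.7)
The plan is to build $\tU_n$ and $\tV_n$ by a stage-by-stage matching construction: elements entering $A$ are paired with elements entering $\hA$ at the same entry state, and once paired their final $e$-states are forced to agree by mirroring future $U_i$-enumerations on the $A$-side into $\tU_i$ and future $V_i$-enumerations on the $\hA$-side into $\tV_i$. The timing hypothesis $\hA\searrow\hU_n=\es=A\searrow\hV_n$ is what makes this possible: it freezes the $\hU$-state of every $\hx\in\hA$ and the $\hV$-state of every $x\in A$ at entry time, so after entry the state of a matched pair can change only through the two mirror moves we control.

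At stage $s+1$ I append each new element $x\in A_{s+1}\setminus A_s$ to a waiting list indexed by its full entry profile (its $e$-state $\nu(e,x,s)$ at every $e$, computed from $\{U_{n,s}\}$ and $\{\hV_{n,s}\}$), and similarly for each new $\hx\in\hA_{s+1}\setminus\hA_s$, using $\{\hU_{n,s}\}$ and $\{V_{n,s}\}$. A greedy matching step pairs one unmatched $x$ with one unmatched $\hx$ whose entry profiles agree. Then, for every currently matched pair $(x,\hx)$, I enumerate $\hx$ into $\tU_i$ for each $i$ with $x\in U_{i,s+1}$ but $\hx\notin\tU_{i,s}$, and enumerate $x$ into $\tV_i$ for each $i$ with $\hx\in V_{i,s+1}$ but $x\notin\tV_{i,s}$. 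Since every element I put into $\tU_n$ (respectively $\tV_n$) is already in $\hA$ (respectively $A$) at the moment of enumeration, $\tU_n\setminus\hA=\hU_n$ and $\tV_n\setminus A=\hV_n$ hold automatically.

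For the verification, each matched pair $(x,\hx)$ satisfies $\{i\leq e:x\in U_i\}=\{i\leq e:\hx\in\tU_i\}$ and $\{i\leq e:x\in\tV_i\}=\{i\leq e:\hx\in V_i\}$ in the limit, so $x$ and $\hx$ have identical final $e$-states for every $e$. If $\DnA$ is infinite then (\ref{DnA}) gives $\DnhA$ infinite, so the greedy matching eventually pairs every element of $\DnA$ with one of $\DnhA$ (the backlog on either side clears because new arrivals come infinitely often on both sides), and both sides of (\ref{auto}) become infinite together; if $\DnA$ is finite, only finitely many elements on either side enter state $\nu$, and they cannot witness $\exists^\infty$. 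The main obstacle is coordinating the matching across all levels $e$ simultaneously, because an element's entry profile is an $\omega$-sized object visible only one column at a time; the standard device, which I would adopt, is to match at the highest $e$-state that has been stabilized by the current stage and refine the pairing as further columns of the enumeration appear, using a priority ordering on $e$-states so that every level is eventually handled correctly.
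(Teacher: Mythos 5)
Note first that the paper does not prove this theorem at all: it states it as a citation to Soare [1974] and uses it as a black box, so there is no in-paper proof to compare against. Evaluating your argument on its own terms: the basic mirroring observation is sound. The hypotheses $\hA \searrow \hU_n = \emptyset = A \searrow \hV_n$ do freeze the $\hU$-state of each $\hx \in \hA$ and the $\hV$-state of each $x \in A$ at the moment of entry, so that \emph{if} $x$ and $\hx$ could be permanently paired with identical full entry profiles, mirroring future $U$-changes of $x$ into $\tU$ and $V$-changes of $\hx$ into $\tV$ would force their final $e$-states to agree at every level. The gap is in the pairing itself. Hypothesis~(\ref{DnA}) only equates \emph{infiniteness} of $\DnA$ and $\DnhA$, one finite $e$-state $\nu$ at a time; it does not yield a bijection pairing each $x$ with an $\hx$ of identical full entry profile. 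Indeed no such $\hx$ need exist for a given $x$: the entry $e$-states of $x$ can agree with those of candidate partners at every $e$ where the relevant $\DnA$ is infinite and still diverge at some other $e$ (where both sides of (\ref{DnA}) are finite but of different sizes). Since there are infinitely many levels and only finitely many exceptions are tolerable at each, these imperfectly matched elements can accumulate and concentrate on a single final $e$-state, breaking~(\ref{auto}).

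Your proposed remedy, ``refine the pairing as further columns appear,'' does not close this, because $\tU_n$ and $\tV_n$ are c.e.\ and enumeration is irreversible: once you have mirrored some columns for a pair $(x, \hx_1)$ and then re-pair $x$ with $\hx_2$, both $x$ and $\hx_1$ permanently retain the $\tV_i$- and $\tU_i$-memberships already granted, and there is no mechanism ensuring that $\hx_2$'s residual $\tU$-state is compatible with $x$'s $U$-state (nor that $x$'s inherited $\tV$-state is compatible with $\hx_2$'s $V$-state). Managing exactly this — the leftover state produced by injury, coordinated with the count-matching requirement across all levels — is what Soare's marker machinery, the priority ordering on states, and the ``covering'' relation mentioned in the paper's surrounding discussion are for, and the proposal does not supply a substitute for them.
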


A {\em skeleton} of the \ce sets is a simultaneous computable
enumeration of \ce sets such that every \ce set is finitely different
from some set on the list.  Let $\{U_n\}_{n\in\omega}$ and
$\{V_n\}_{n\in\omega}$ be skeletons of the \ce sets.  Thus, if we
begin with a partial automorphism of $\calE^*$ on the complements of
$A$ and $\widehat A$ that takes $U_n$ to $\hU_n$ and $\hV_n$ to $V_n$,
then we can extend it to an automorphism of $\calE^*$ that takes $A$
to $\widehat A$, $U_n$ to $\tU_n$, and $\tV_n$ to $V_n$.

The way we build a partial automorphism on the complements of
$A$ and $\hA$ is to match infinite $e$-states for elements in $\ov A$
and $\ov \hA$.  That is, if there are infinitely many elements
$x\in \ov A$ such that $\nu(e,x)=\nu$, then there are infinitely many
elements in $\wh y\in \ov \hA$ such that $\hnu(e,\wh y)=\hnu$, and
vice versa.

We call the state $\nu$ ($\hnu$) a {\em gateway state} if $\DnT$
($\DnhT$) is infinite.  In the full version of the Extension Theorem,
we do not need an exact matching of gateway states $\nu$ and $\hnu$,
but only a {\em covering} of the states, as described in Soare [1974].
Our construction gives an exact matching of gateway states, so we have
stated only the special case of the theorem here.

\begin{theorem}[Cholak/Epstein]\label{Q5} There is a set $A$ that does
  not have semilow complement, but is effectively automorphic to a low
  set.
\end{theorem}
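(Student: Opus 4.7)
The plan is to build $A$, $\hA$, and computable enumerations of auxiliary arrays $\{\hU_n\}, \{\hV_n\}$ simultaneously by a tree priority argument in the Harrington--Soare style used in R.~Miller's proof of Theorem~\ref{Millers}, with additional diagonalization requirements ensuring that $\overline A$ is not semilow. Fix computable skeletons $\{U_n\}, \{V_n\}$. Applying the Extension Theorem to the resulting setup will produce computable enumerations $\{\tU_n\},\{\tV_n\}$ extending $\{\hU_n\},\{\hV_n\}$, and since the skeletons are computably indexed this yields an effective automorphism of $\calE^*$ mapping $A$ to $\hA$. The requirements are the standard matching requirements for exact gateway-state matching (maintained by the list $\LG$ and by a steady flow of fresh elements into $\hA$), the Miller-style lowness requirements $L_e$ for $\hA$, and a new family $N_e$: for the $e$-th $\Delta^0_2$ function $f_e$, some index $i$ must satisfy $\lim_s f_e(i,s) \neq \chi_S(i)$, where $S = \{i : W_i \cap \overline A \neq \emptyset\}$.

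For the $N_e$ strategy, I would use the recursion theorem to fix an index $i_e$ whose $W_{i_e}$ is planned to be the singleton $\{x_e\}$, with $x_e$ a fresh witness chosen on the first visit to the $N_e$-node, and give the node two outcomes. The ``$0$'' outcome guesses $\lim_s f_e(i_e,s) = 0$ and keeps $x_e$ permanently out of $A$, so that $W_{i_e} \cap \overline A = \{x_e\}$ and $\chi_S(i_e) = 1$; the ``$1$'' outcome guesses the limit is $1$ and enumerates $x_e$ into $A$ at some stage, so that $W_{i_e} \cap \overline A = \emptyset$ and $\chi_S(i_e) = 0$. Along the true path the correct outcome is taken and $N_e$ is satisfied. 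When $x_e$ is enumerated into $A$ at stage $s$ under the ``$1$'' outcome, its $e$-state is $\<e, \sigma, \emptyset\>$ with $\sigma = \{n \le e : x_e \in U_{n,s}\}$; the second coordinate is empty because we refrain from placing $x_e$ into any $\hV_n$, preserving $A\searrow\hV_n = \emptyset$. This computably determined state is then matched on the $\hA$ side by enumerating a fresh large $\widehat y$ first into the appropriate $\hU_n$'s (for $n \in \sigma$) and then into $\hA$.

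The main obstacle is orchestrating the $N_e$-actions with the Miller-style lowness restraints and with the Harrington--Soare state-matching bookkeeping. When the ``$1$'' outcome of $N_e$ fires and we enumerate $x_e$ into $A$, we must simultaneously find a fresh $\widehat y$ that exceeds every active lowness restraint $\phi_j^{\hA}(j)[s]$ of higher priority, with enough room to be pre-enumerated into the required $\hU_n$'s before entering $\hA$ (so that $\hA \searrow \hU_n = \emptyset$ is preserved). Since only finitely many restraints are active at any stage, an arbitrarily large $\widehat y$ suffices, and the lowness verification of Lemma~\ref{Blow} transfers essentially verbatim. The remaining verifications follow the template of Theorem~\ref{Millers}: along the true path all requirements are met, condition~\eqref{DnA} of the Extension Theorem holds, and the resulting automorphism is effective since all constructed enumerations and the fixed skeletons are computable.
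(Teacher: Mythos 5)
Your proposal diverges from the paper's actual construction (which uses a pinball machine, not a Harrington--Soare tree), and it has a genuine gap in the state-matching step that the paper's pigeonhole device is specifically designed to close.

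The crux of Theorem~\ref{Q5} is that whenever an element $x$ enters $A$ in some $e$-state $\nu$, an element must enter $\hA$ in the mirror state $\hnu$, and this must happen without violating the lowness restraint on $\hA$. You propose to achieve this by choosing a fresh large $\widehat y$, pre-enumerating it into the $\hU_n$'s in $\sigma$, and then putting it into $\hA$, asserting that $x_e$'s $\tau$-coordinate is empty ``because we refrain from placing $x_e$ into any $\hV_n$.'' This is where the argument breaks down. In the automorphism machinery, the sets $\hV_n$ are not free parameters: an element $x$ of $\overline A$ is put into $\hV_n$ precisely when its partner $m(x)$ lies in $V_n$, and $m(x)$'s membership in $V_n$ is driven by the externally given c.e.\ sets $W_n$. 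Refusing to put $x_e$ into the $\hV_n$'s forced by $m(x_e)$'s $V_n$-memberships would destroy the partial automorphism on the complements. You have conflated the Extension Theorem hypothesis $A \searrow \hV_n = \emptyset$ (which only forbids $x$ entering $\hV_n$ \emph{after} it has entered $A$) with the much stronger and false claim that witnesses never need to be in any $\hV_n$. Consequently, $x_e$'s entry state can have a nonempty, dynamically changing $\tau$-component, and there is no way to manufacture a fresh $\widehat y$ with a matching $\tau$-component: the relevant coordinate of $\widehat y$'s state is its membership in the $V_n$'s, which you do not control.

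This is exactly what the paper's $4^{i+1}+1$ parallel sets $S^i_k$ and the pigeonhole argument are for. By appointing $4^{i+1}+1$ witnesses (one more than the number of $i$-states) and waiting until all are active, the construction guarantees that two witnesses $x_0$ and $x_1$ share the same $i$-state. Then $x_0$ is dumped into $A$ and $m(x_1)$ is dumped into $\hA$; since $x_1$ and $m(x_1)$ are partners, $m(x_1)$ is \emph{already} in the mirror state $\hnu$ with no manufacturing required, and the leftover elements $m(x_0)$ and $x_1$ are re-partnered. Moreover, the witnesses are spread over the sets $S^i_k$ precisely so that the index $j = i+k+n^i_k$ used in the $\hA$-restraint grows, letting the construction satisfy the lowness requirements $N_j$ despite repeated dumping. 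Your single-witness-per-requirement scheme has no analogue of either mechanism, and it also has a secondary problem: once $x_e$ is enumerated into $A$ on a premature visit to the ``$1$'' outcome, the action cannot be retracted if $\lim_s f_e(i_e,s)$ later settles to $0$. The paper sidesteps this by resetting and re-populating the $S^i_k$'s on each change of $\phi_i$'s guess, so that persistent oscillation means $\lim_s \phi_i(g(i,k^*,n),s)$ fails to exist, which also satisfies the requirement.
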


\begin{proof}

  We will construct a \ce set $\overline{A}$ that is not semilow and a
  \ce set $\widehat A$ that is low, such that $A$ and $\widehat A$ are
  effectively automorphic to each other.  We use Soare's Effective
  Extension Theorem.

  \subsection{Requirements}
  
  Our first requirement is the automorphism requirement, which constructs an
  automorphism taking $A$ to $\widehat A$.  To do this, we build a
  simultaneous computable enumeration of \ce sets $\{U_n\}_{n\in\omega}$,
  $\{V_n\}_{n\in\omega}$, $\{\hU_n\}_{n\in\omega}$, and
  $\{\hV_n\}_{n\in\omega}$ as in the extension
  theorem, such that $\{U_n\}_{n\in\omega}$ and 
  $\{V_n\}_{n\in\omega}$ are skeletons.  We will ensure that the map taking $U_n$ to $\hU_n$ and $\hV_n$
  to $V_n$ gives a partial automorphism of $\mathcal E^*$ on the
  complements of $A$ and $\hA$, and that we have equality of gateway
  states to meet the hypotheses of the Extension Theorem.

  Let $\phi_i$ be a listing of all partial computable functions of two
  variables.  We will use it to list all $\Delta_2$ functions.

  To achieve that $A$ does not have semilow complement, we meet the
  following requirement for all $i\in\omega$:

  P$_i$:\quad $\widehat\phi_i(e):= \lim_s \phi_i(e,s)$ is not the
  characteristic function of $\{e\ :\ W_e\cap \ov{A}$ nonempty$\}$.

  To this end, for each $i\in\omega$ we will build sets $S_k^i$ for
  each $k\in \{0,1,\ldots 4^{i+1}\}$, such that for some $k$, $S_k^i$
  will be the \ce set $W_e$ and
  $\widehat \phi_i(e)=\lim_s \phi_i(e,s)$ will be wrong about whether
  $W_e$ and $\ov{A}$ have nonempty intersection.  The index $e$ for
  $S_k^i$ may be found computably in $k$, $i$, and the number of times
  $n$ we have started over in building $S_k^i$.  We call this
  computable function $g(i,k,n)$.  By the Recursion Theorem with
  Parameters (see \cite[II 3.5]{Soare:87}), we may assume that we know the function $g$ in advance.
  We can rewrite P$_i$ as:

  P$_i$:\quad
  $(\exists k)(\exists n)[0\leq k\leq 4^{i+1}\text{ and
  }\hphi_i(g(i,k,n))=1\implies S_k^i\cap \ov A= \es, $
  and $\hphi_i(g(i,k,n))=0\implies S_k^i\cap \ov A \neq \es$, for
  $S_k^i$ having been reset $n$ times$]$.

  The number $4^{i+1}$ appears as it is the number of $i$-states.
  Since each $i$-state is determined by a subset of
  $\{U_0, \ldots, U_i\}$ as well as a subset of
  $\{\hV_0, \ldots, \hV_i\}$, there are
  $2^{i+1}\cdot 2^{i+1}=4^{i+1}$.  This will be important for matching
  entry states, as we will see in the next
  section.  

  To achieve that $\hA$ is low, we meet the usual requirement for all
  $j\in\omega$:

  N$_j$:\quad
  $(\exists^\infty s)\Phi^{\widehat A}_j(j)[s]\converges\implies
  \Phi^{\widehat A}_j(j)\converges$.

  This guarantees that $\hA$ is low because it makes the jump of $\hA$
  limit computable, and thus computable in $\bf 0'$.

  \subsection{Basic strategy}

  Our construction will take place on two identical pinball machines,
  $M$ and $\widehat M$, see Cholak \cite{mr95f:03064}, Soare
  \cite{Soare:87} or Harrington and Cholak
  \cite{Harrington.Soare:96}. Each element of $\omega$ will flow
  through $M$, and each element of a copy of $\omega$ called
  $\widehat \omega$ will flow through $\widehat M$.  The construction
  of the pinball machines is extremely simple.  They each have a
  single corridor along which all elements flow.  Along the corridor
  are gates $G_e$ for each $e\in \omega$.  Elements may be held at
  gates or pass through, according to the construction.  In our
  construction, a {\em closed} gate will let nothing through, while an
  {\em open} gate will let all elements through except for those
  currently designated as witness, as explained later.  Throughout the
  construction, we will match elements in $\omega$ to elements in
  $\homega$ by a matching function $m(x)$ with domain $\omega$, where
  we begin with $m(x)=\widehat x$, the copy of $x$ in
  $\widehat \omega$.  During the construction, elements may be
  rematched.

  Let $\{W_e\}_{e\in\omega}$ be a standard enumeration of the \ce
  sets.  We build two skeletons $\{U_e\}_{e\in\omega}$ and
  $\{V_e\}_{e\in\omega}$ as follows: If $x\in W_e$ and $x$ is either
  at gate $G_{e'}$ for $e'\geq e$ or $x$ has been removed from $M$,
  enumerate $x$ into $U_e$.  Similarly, if $x\in W_e$ and $\hx$ is
  either at gate $G_{e'}$ for $e'\geq e$ or $\hx$ has been removed
  from $\widehat M$, enumerate $\hx$ into $V_e$.  For each $e$, only
  finitely many elements never reach gate $G_e$ or leave the machines,
  so $U_e=^* W_e$ and $V_e=^*W_e$.

  Elements move through $M$ by flowing through the machine, starting
  at gate $G_0$, then moving to $G_1$, and so on.  Every element on
  the machine is at a gate.  As $x$ moves, $m(x)$ copies its move on
  $\widehat M$.  To meet the automorphism requirement, if $x$ is in
  $U_e$ while at gate $G_{e'}$ for $e'\geq e$, enumerate $m(x)$ into
  $\widehat U_e$.  Similarly, if $m(x)$ is in $V_e$, enumerate $x$
  into $\widehat V_e$.

  We define for each stage $s$ a restraint
  function
  $$r(j,s)=\max\{\phi^{\widehat A}_{j'}(j')[s]\ | \ j'\leq j\}.$$ That
  is, $r(j,s)$ is the maximum use of any jump computation
  $\Phi^\hA_{j'}(j')[s]$ for $j'\leq j$.
 
  To meet P$_i$, we would like to build a \ce set $S$ such that if
  $\phi_i$ guesses that $S\cap \ov A$ is empty, we put an element not
  currently in $A$ into $S$, and if it later guesses that
  $S\cap \ov A$ is nonempty, we put that element into $A$.  This is
  the standard method of constructing a set that does not have semilow
  complement.  It is also frequently used to build a nonlow set, as
  any set without semilow complement is not low.  In order to meet the
  automorphism requirement, if we put infinitely many elements into
  $A$, we must put infinitely elements into $\widehat A$ from the same
  $i$-state.  We cannot simply enumerate $\widehat x$ into
  $\widehat A$ whenever we enumerate $x$ into $A$ because we need to
  ensure that $\widehat A$ is low.  Instead, when we put an element
  into $A$, we would like to put a large enough element into $\hA$
  that is in the same $i$-state.  The way we will accomplish this is
  to build several \ce sets $S_k^i$ for each $i$ and we will guarantee
  that one will act as the desired set $S$.  When we want to enumerate
  an element $x$ from some $S_{k_0}^i$ into $A$ to satisfy a positive
  requirement, we will simultaneously enumerate a large enough element
  $\widehat z$ into $\hA$, where $\widehat z$ is in the same $i$-state
  as $x$.  Since we have removed $x$ and $\widehat z$ from the
  machines, we have left $m(x)$ and $m^{-1}(\widehat z)$ without
  reasonable partners, and so we partner them with each other.  By
  this process, whenever we enumerate an element into $A$ from some
  $i$-state, we also enumerate an element into $\hA$ from the same
  $i$-state, so that we achieve exact matching of entry states, as
  desired by the Effective Extension Theorem.
  
  For every $i$, we build $S_k^i$ for each
  $k\in \{0,1,\ldots 4^{i+1}\}$.  We will start with all sets empty.
  If $\phi_i$ guesses that they all have empty intersection with
  $\overline A$, then we will close gate $G_i$ and begin to fill each
  $S_k^i$ with a single element, in increasing order of $k$.  We fill
  $S_k^i$ with the least element $x$ at gate $G_i$ such that
  $m(x)>r(j,s)$ for $j=i+k$ and $m(x)$ is greater than any element in
  any $S_{k'}^i$ for
  $k'<k$.  

  When a set $S_k^i$ contains an element not in $A$, we call that
  element $x$ and its partner $m(x)$ {\em witnesses}.  Once each
  $S_k^i$ contains an element, we reopen the gate $G_i$ to all
  non-witnesses.  

  We then wait until $\phi_i$ guesses that each $S_k^i$ has nonempty
  intersection with $\overline A$.  Now, there are $4^{i+1}+1$
  witnesses and only $4^{i+1}$ different
  $i$-states.
  Thus, by the pigeonhole principle, there are two witnesses in the
  same $i$-state.  Say they are $x_0$ in $S_{k_o}^i$ and $x_1$ in
  $S_{k_1}^i$, with $k_0<k_1$.  We will enumerate $x_0$ into $A$ and
  $m(x_1)$ into $\widehat A$.  These elements are entering in the same
  state.  Once elements enter $A$ or $\widehat A$, they are removed
  from the pinball machine.  Their previous matches,
  $m(x_0)=\widehat y$ and $x_1$, do not get removed from the pinball
  machine.  Instead, set $m(x_1)$ to be $\widehat y$, the element
  previously known as $m(x_0)$.  We reset all $S_{k'}^i$ for $k'>k_0$,
  removing the name ``witness'' from any elements in sets that are
  reset, and starting new empty $S_{k'}^i$.  Note that $S_{k_0}^i$ has
  not been reset, but its only element has entered $A$.

  If ever $\phi_i$ again guesses correctly which sets $S_k^i$ have
  nonempty intersection with $\ov A$, then we must repeat the process,
  with some minor changes.  We again close the gate $G_i$ until there
  is a single element in each $S_k^i$ that is not also in $A$.  For
  $k=0$, we fill $S_0^i$ as before, with the least element $x$ such
  that $m(x)>r(j,s)$ for $j=i+0=i$.  For $k>0$, we wait until
  $S_{k-1}^i$ has been filled and enumerate into it the least $x$ such
  that $m(x)>r(j,s)$ for $j=i+k+n^i_k$, where $n^i_k$ is the number of
  times $S_k^i$ has been reset by the action of
  P$_i$. 
  The purpose of $n^i_k$ is so that if P$_i$ acts infinitely often, it
  will still only injure each N$_j$ finitely often.  Once each $S_k^i$
  contains a witness, we reopen $G_i$ to all non-witnesses.  We then
  continue as in the previous paragraph.

  In order that $S_k^i$ respect N$_j$ whenever $j\leq i+k+n^i_k$, if
  $\Phi^{\widehat A}_j(j)[s+1]\converges$ by a new computation, we
  reset $S_k^i$.  This causes any witnesses to no longer be witnesses
  and the set to be built again from an empty set.  Note that it does
  not cause $n_k^i$ to increase because the resetting was caused by
  N$_j$ and not by the action of P$_i$.

  The primary reason for using multiple $S_k^i$ sets instead of a
  single $S^i$ is that we want to respect more and more computations
  when we enumerate elements into $\hA$, which we could not do with
  only a single $S^i$ that we can only reset finitely often.  We will
  show in the verification that there will be some $S_k^i$ that is
  only reset finitely often that we will use to satisfy $P_i$.

  \subsection{Construction}


  {\em Stage $s=0$.}  Let each $n_k^i=0$.  All sets begin empty and
  all gates begin open.

  {\em Stage $s+1$}.

  {\bf Step 1}: Place $s$ on machine $M$ at gate $G_0$ and
  $\widehat s$ on machine $\widehat M$ at gate $G_0$.  Let
  $m(s)=\widehat s$.

  {\bf Step 2}: For each $x$ and $e$, if $x\in W_{e,s}$, and $x$ is
  either at gate $G_{e'}$ for $e'\geq e$ or $x$ has been removed from
  the machine, enumerate $x$ into $U_{e,s+1}$.  Similarly, if
  $\hx\in W_{e,s}$, and either $\hx$ is at gate $G_{e'}$ for
  $e'\geq e$ or $\hx$ has been removed from the machine, enumerate
  $\hx$ into $V_{e,s+1}$.
 
  In addition, if $x\in U_{e,s+1}$ and is still on the
  machine, 
  enumerate $m(x)$ into $\widehat U_{e,s+1}$.  Similarly, if
  $\widehat x\in V_{e,s+1}$ and is still on the
  machine, 
  enumerate $m^{-1}(\hx)$ into $\widehat V_{e,s+1}$.

  {\bf Step 3}: If $\Phi_j^{\widehat A}(j)[s]\converges$ via a new
  computation, reset all $S_k^i$ with $k+i+n_k^i\geq j$ by creating a
  new $S_k^i$, increasing $n_k^i$ by one, and cancelling all witnesses
  from $S_k^i$.

  {\bf Step 4}: For each $i$, in increasing order, check if either of
  the following cases apply.

  \quad {\em Case 4A} (filling $S_k^i$): At least one $S_k^i$ has
  empty intersection with $\ov A$, and either the gate is closed or
  $\phi_i(g(i,k,n_k^i),s)$ is equal to the characteristic function of
  $S_k^i\cap \ov A$ for each $k$.  Close the gate if not already
  closed.  For the least $k$ such that $S_k^i\cap \ov A$ has empty
  intersection, check if there is an $x$ at gate $G_i$ such that both
  $x$ and $m(x)$ are larger than any current or previous witness at
  the gate and $m(x)>r(j,s)$ for $j=i+k+n_k^i$.  If so, enumerate $x$
  into $S_k^i$ and call it and $m(x)$ witnesses.  If we enumerated an
  element into $S_k^i$ for $k=4^{i+1}$, open the gate.  Continue to
  Step 5.

  \quad {\em Case 4B} (enumerating into $A$ and $\widehat A$): All
  $S_k^i$ have nonempty intersection with $\ov A$ and
  $\phi_i(g(i,k,n_k^i),s)=1$ for all $k$.  Find the least witness
  $x_0$ such that there is a witness $x_1$ in the same $i$-state as
  $x_0$.  Such a pair is guaranteed by the pigeonhole principle.  Say
  the element $x_0$ is in $S_{k_0}^i$ and $x_1$ is in $S_{k_1}^i$.
  Note that $k_0<k_1$.  Enumerate $x_0$ into $A$ and $m(x_1)$ into
  $\widehat A$ and remove both $x_0$ and $m(x_1)$ from the machines.
  Now $x_1$ and $\widehat y=m(x_0)$ do not have matches on the
  machine, so set $m(x_1)=\widehat y$.  Reset each $S_k^i$ for $k>k_0$
  by increasing $n_k^i$ and starting $S_k^i$ over with no witnesses.
  Note that $S_{k_0}^i$ does not get reset and so is not empty, but
  now has empty intersection with $\ov A$ and thus no witnesses.

  {\bf Step 5}: For each $x$ at an open gate $G_e$ such that $x$ is
  not a witness and $x,~ m(x)>e$, move $x$ and $m(x)$ to gate
  $G_{e+1}$ on their respective machines.

  \subsection{Verification}

  \begin{lemma}\label{injury1} Each N$_j$ is injured finitely often
    and is satisfied, so $\widehat A$ is low.
  \end{lemma}
  \begin{proof}
    Induct on $j$.  Suppose true for all $j'<j$.  Then since N$_{j'}$
    is injured finitely often, $\lim_s r(j',s)$ exists.

    If $\Phi_j^{\widehat A}(j)[s]\converges$, then the only $S_{k}^i$
    that can injure the computation by enumerating $m(x)$ into
    $\widehat A$ for $x\in S_k^i$ must satisfy $i+k<j$.  Each of these
    $S_k^i$ gets reset finitely often by N$_{j'}$, for $j'<j$, by the
    induction hypothesis.  By the construction, after an element
    $m(x)$ enters $\widehat A$, $S_k^i$ gets reset and $n_k^i$
    increases by one, as $S_k^i$ is playing the role of $S_{k_1}^i$ in
    the construction, where $k_1>k_0$.  Thus, after finitely many
    elements of the form $m(x)$ with $x\in S_k^i$ enter $\hA$, any
    future elements $x\in S_k^i$ must satisfy $m(x)>r(j,s)$ since
    $i+k+n^i_k>j$ for large enough $n^i_k$.  Since N$_j$ can only be
    injured finitely often by finitely many $S_k^i$, N$_j$ will
    eventually be satisfied.  Thus, the jump of $\widehat A$ is limit
    computable, so $\widehat A$ is low.
  \end{proof}

  \begin{lemma}\label{gateinf} For each gate $G_i$, infinitely many
    elements pass through gate $G_i$ to gate $G_{i+1}$.
  \end{lemma}

  \begin{proof} Induct on $i$.  Suppose true for all $i'<i$.  Then
    infinitely many elements reach gate $G_i$.

    Note that if gate $G_i$ is ever closed, then it will reopen when
    an element enters $S_k^i$ for $k=4^{i+1}$.  First we show that it
    will eventually reopen.  As $G_i$ is closed, we know that
    $S_{4^{i+1}}^i$ has empty intersection with $\ov A$, so Case 4A
    applies.  While the gate is closed, Case 4B will not act for this
    $i$, so the only way for any $S_k^i$ to be reset is for
    $\Phi_j^\hA(j)[s]$ to converge via a new computation for
    $j\leq k+i+n^i_k$.  However, no $n^i_k$ will change for any $k$
    while $G_i$ is closed, and Lemma \ref{injury1} tells us that
    $\Phi_j^\hA(j)[s]$ can only converge via a new computation
    finitely often, so each $S^i_k$ will only be reset finitely often
    while $G_i$ remains closed.  Furthermore, for $j=i+k+n^i_k$,
    $r(j,s)$ will eventually stop increasing while the gate remains
    closed.  As infinitely many elements arrive at gate $G_i$, each
    $S^i_k$ will eventually receive an element, including
    $S^i_{4^{i+1}}$, at which point the gate is
    opened. 
    Therefore, whenever gate $G_i$ is closed, it is eventually
    reopened.

    If the gate $G_i$ is closed only finitely often, then almost all
    elements that enter $G_i$ also enter $G_{i+1}$.  Consider the case
    that $G_i$ is closed infinitely often.  Then it is opened
    infinitely often, meaning that infinitely often, all $S_k^i$ have
    nonempty intersection with $\ov A$.  In order for $G_i$ to become
    closed again, some $S_k^i$ must have empty intersection with
    $\ov A$.  This can only happen by Step 4 Case 4B or Step
    3 applies.  In either case, there must be some $S_k^i$ that gets
    reset and so its witness is no longer a witness, and on Step 5, it
    moves to gate $G_{i+1}$ if large enough.  Thus, infinitely often,
    an element moves from $G_i$ to $G_{i+1}$.
  \end{proof}

  \begin{lemma}\label{skeleton} For each $e$, $W_e=^* U_e=^* V_e$, and
    thus $\{U_e\}_{e\in\omega}$ and $\{V_e\}_{e\in \omega}$ are
    skeletons.
  \end{lemma}

  \begin{proof} We will prove that $W_e=^* U_e$.  The proof that
    $W_e=^* V_e$ is similar.  Note that $U_e\subseteq W_e$, as we
    never enumerate an element into $U_e$ until after it appears in
    $W_e$.  Note also that the only elements in $W_e$ that are not
    enumerated into $U_e$ are those that are permanently held at a
    gate $G_i$ for $i<e$.  We know from Lemma \ref{gateinf} that all
    gates are open at infinitely many stages, thus no element is held
    forever at gate $G_i$ unless it is forever labeled a witness or if
    either $x$ or $m(x)$ are less than or equal to $i$.  The latter
    situation happens for finitely many elements.  When any $S_k^i$ is
    reset, its witnesses are cancelled, meaning they are no longer
    considered to be witnesses, so any element that is forever a
    witness is in the final incarnation of $S_k^i$ and never enters
    $A$.  (For the $V_e$ case, these witnesses are of the form $m(x)$
    for $x\in S_k^i\cap \ov A$.)  
There is never more than one element
    of $S_k^i\cap \ov A$, for each $i<e$, so there are finitely many
    elements that are forever kept at $G_i$ as witnesses.  Thus, all
    other elements of $W_e$ eventually either leave the machine or
    reach $G_e$ and are enumerated into $U_e$.
  \end{proof}

  \begin{lemma} Requirement P$_i$ is met, so $\ov A$ is not semilow.
  \end{lemma}
  \begin{proof}
    We must show that
    $\lim_s \phi_i(e,s)\neq \{e\ | \ W_e\cap \ov A\neq \es\}$.
    Suppose for a contradiction that it is false.  Let $i$ be the
    least such that $\lim_s \phi_i(e,s)$ gives the characteristic
    function of the set of $e$ such that $W_e$ has nonempty
    intersection with $\ov A$.

    Note that $S_0^i$ can only be reset by N$_j$ for $j\leq i$, which
    will only reset $S_0^i$ finitely often.  Let $k^*$ be the greatest
    $k$ such that $S_k^i$ is only ever reset finitely often.  If
    $k^*=4^{i+1}$, then Step 4 only acts finitely often, meaning that
    Case 4A and Case 4B only apply finitely often, so for almost all
    $s$, $\phi_i$ guesses wrong about whether some $S_k^i$ has
    nonempty intersection with $\ov A$.  Thus, since we are assuming
    that $\phi_i$ is eventually correct, we cannot have that
    $k^*=4^{i+1}$.

    Now, let $S_{k^*}^i$ be reset $n$ times in total.  Then
    $\lim_s \phi_i(g(i,k^*,n),s)$ is 1 if $S_{k^*}^i$ has nonempty
    intersection with $\ov A$ and 0 otherwise.  Since
    $k^*\neq 4^{i+1}$, $S_{k^*+1}^i$ gets reset infinitely often,
    which means that Step 4 Case 4B acts infinitely often by
    enumerating an element $x\in S_{k^*}^i$ into $A$.  Thus,
    infinitely often, $S_{k^*}^i$ gets a new witness in Case 4A and
    then has that witness enumerated into $A$ in Case 4B.  Therefore,
    there are infinitely many stages when $S_{k^*}^i$ has empty
    intersection with $\ov A$ and Case 4A applies because
    $\phi_i(g(i,k^*,n),s)=0$ and there are infinitely many stages when
    $S_{k^*}^i$ has nonempty intersection with $\ov A$ and Case 4B
    applies because $\phi_i(g(i,k^*,n),s)=1$.  This contradicts the
    assumption that $\lim_s \phi_i(g(i,k^*,n),s)$ exists.  Therefore,
    requirement P$_i$ is met, and since
    $\{e\ | \ W_e\cap \ov A\neq \es\}$ is not limit computable,
    $\ov A$ is not semilow.

  \end{proof}

  \begin{lemma} There is an effective automorphism of $\mathcal E^*$
    taking $A$ to $\widehat A$.
  \end{lemma}
  \begin{proof} Note that the construction provides a simultaneous
    computable enumeration of the sets $\{\widehat U_e\}_{e\in\omega}$
    and $\{\widehat V_e\}_{e\in \omega}$.  We guarantee that $\nu$ is
    infinite on the complement of $A$ if and only if $\widehat \nu$ is
    infinite on the complement of $\wh A$ by always enumerating
    elements simultaneously with their partners: $x$ and $m(x)$ are in
    the same $e$-state, where $G_e$ is the highest gate they reach.
    Note that there is no possibility for either $x$ or $\hx$ to be
    rematched infinitely often, as they each only reach finitely many
    gates and can only be witnesses once at each gate.  Also note that
    if $x$ and $m(x)$ are permanently partners and remain forever at
    gate $G_e$, then they are in the same $e'$-state for all $e'>e$ as
    well, since neither are enumerated into $U_i$, $V_i$, $\hU_i$ or
    $\hV_i$.

    Next, we apply the Effective Extension Theorem.  It suffices to
    show that the gateway states for $A$ equal the gateway states for
    $\widehat A$.  We only ever enumerate into either $A$ or
    $\widehat A$ during Step 4 Case 4B.  When we do, we pick witnesses
    $x_0$ and $x_1$ at Gate $G_i$ that are in the same $i$-state,
    which are guaranteed to exist by the pigeonhole principle.  Then,
    we enumerate $x_0$ into $A$ and $m(x_1)$ into $\widehat A$.  If
    $x_0$ is in $i$-state $\nu$, then $m(x_1)$ is in $\widehat \nu$.
    As above, note that $x_0$ and $m(x_1)$ are also in the same
    $e$-state for any $e$ because if $e>i$, then neither element is in
    $U_e$, $V_e$, $\widehat U_e$, or $\widehat V_e$.  Thus, we get
    that the gateway states $D_\nu^A$ equal the gateway states
    $\DnhA$.

    By the Effective Extension Theorem, we can extend the partial
    automorphism on the complements of $A$ and $\widehat A$ to a total
    automorphism of $\mathcal E^*$ taking $A$ to $\widehat A$.

  \end{proof}
\end{proof}


\section{Semilow$_{1.5}$ not semilow in all nonlow degrees}

\label{semilow15}

In Theorem \ref{Q5}, we showed that there is set $A$ that does not
have semilow complement and is effectively automorphic to a low set.
Since $A$ is effectively automorphic to a low set, $A$ has
semilow$_{1.5}$ complement (see the introduction).

In Theorem \ref{onefivenotsemilow}, we show that in every nonlow \ce
degree, there is a set that has semilow$_{1.5}$ complement but that
does not have semilow complement.  It remains open whether we can find
such sets in every degree that are also effectively automorphic to low
sets.

\begin{theorem}\label{onefivenotsemilow} For every nonlow \ce degree
  $\bf d$, there is a \ce set $A\in \bf d$ such that $A$ has
  semilow$_{1.5}$ complement, but does not have semilow complement.
\end{theorem}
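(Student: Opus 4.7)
The plan is to fix a c.e.\ set $D\in\mathbf d$ (nonlow by assumption) and build a c.e.\ set $A\equiv_T D$ whose complement is semilow$_{1.5}$ but not semilow. I will code $D$ into $A$ on a reserved infinite computable set $R=\{r_0<r_1<\cdots\}$ via $n\in D\iff r_n\in A$, giving $D\leT A$, and enumerate $x\notin R$ into $A$ only at stages $s$ where $D_s\restriction(x+1)\neq D_{s-1}\restriction(x+1)$, so that $A\leT D$. All remaining enumerations into $A$ are driven by two families of requirements, organized by a finite-injury priority argument.

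For non-semilow, for each $e$ a requirement $N_e$ works against $\varphi^{\emptyset'}_e$ being the characteristic function of $\{i:W_i\cap\overline A\neq\emptyset\}$, via the Recursion Theorem with Parameters in exactly the fashion of Soare's construction \cite[IV 4.10]{Soare:87} of a non-semilow-complement c.e.\ set in every nonlow degree. For each $e$ and each $j$ from a set of candidate indices, I build $W_{h(e,j)}$ with a controlled witness $x_{e,j}$ and, based on the limiting value of $\varphi^{\emptyset'}_e(h(e,j),s)$, either keep $x_{e,j}\in\overline A$ (limit $0$) or enumerate $x_{e,j}$ into $A$ on its first subsequent $D$-permit (limit $1$). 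The nonlowness permitting step of Soare's argument carries over unchanged and guarantees the diagonalization succeeds for at least one $j$.

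For semilow$_{1.5}$, I build c.e.\ sets $V_e$ uniformly in $e$ such that $|V_e|=\infty\iff|W_e\cap\overline A|=\infty$, in the style of Maass \cite{Maass:83}. Sub-requirements $S_{e,n}$ try to reserve an $n$-th witness $y_{e,n}\in W_{e,s}\cap\overline A_s$, enumerating one new mark into $V_e$ each time they successfully reserve. Lower-priority $N$-actions are required to avoid reserved witnesses, whereas a higher-priority $N$-action that enumerates a reserved $y_{e,n}$ into $A$ counts as an injury to $S_{e,n}$, which must then reserve a fresh replacement (and enumerate a new mark). Standard finite-injury bookkeeping shows that each $S_{e,n}$ fires successfully at most finitely often; moreover $S_{e,n}$ eventually obtains a permanent reservation if and only if $|W_e\cap\overline A|\geq n$, so $V_e$ is infinite iff $|W_e\cap\overline A|$ is.

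The main obstacle is the joint management of reservations and $D$-permits across the two families: a single $N_e$-enumeration of $x$ into $A$ can force $V_{e'}$-replacements for every $e'$ with $x\in W_{e'}$, each of which must be underwritten by its own $D$-permit on a fresh witness while preserving all higher-priority reservations. I expect to resolve this by the same permitting-with-preservation bookkeeping used by Downey, Jockusch and Schupp in \cite[Theorems 4.6, 4.7]{MR3125901} for their construction of a set without the outer splitting property in every nonlow degree: witnesses are chosen sufficiently large that enough $D$-permits arrive before they are required, and the priority ordering is interleaved so that only finitely many $N$-actions can ever injure any given $S_{e,n}$.
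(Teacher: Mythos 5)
Your proposal has the right overall shape (permit through a nonlow $D$ for $\equiv_T$, diagonalize against $\Delta^0_2$ guess functions for non-semilow, and build a uniform $\Pi^0_2$ witness $V_e$ for semilow$_{1.5}$), but two of its load-bearing pieces would fail as stated.

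First, the rigid coding $n\in D\iff r_n\in A$ is incompatible with the semilow$_{1.5}$ requirement. Your $S_{e,n}$-machinery can only faithfully track $W_e\cap\overline A$ on the part of $\omega$ it is allowed to reserve. On $R$ it is helpless: $W_e\cap R\cap\overline A=\{r_m\in W_e:m\notin D\}$, which is a c.e.\ set intersected with the complement of $D$, and you have no control over $D$. Whether this set is infinite is not uniformly a $\Pi^0_2$ fact for an arbitrary nonlow $D$, so if $W_e$ piles up on $R$ your $V_e$ will be wrong in one direction or the other (either it never fires because it must avoid $R$ while $W_e\cap\overline A\subseteq^* R$, or it fires infinitely often as reserved $r_m$'s keep falling into $A$ when $m$ enters $D$). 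The paper dodges this with a flexible coding device: finite blocks $F_k\subseteq\omega^{[0]}$ of size $2k^2$, where only one element of $F_k$ is dumped into $A$ and the construction chooses which one so as to avoid damaging the $L_j$-bookkeeping (see Step 6 of the construction and the accompanying counting in the $D\leq_T A$ lemma). That slack is not a cosmetic convenience; it is what makes the accounting possible.

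Second, the claim that ``standard finite-injury bookkeeping'' settles the interaction between the $N$-actions, the $D$-permits, and the $S_{e,n}$-reservations papers over the genuinely hard part of the theorem. The paper's proof is a $\Pi^0_2$-tree argument, not a flat priority argument: each node $\alpha$ carries a guess $\alpha(j)\in\{0,1\}$ about whether $W_j-(Y^j\cup A)$ has infinitely many expansionary stages, nodes guessing the infinite outcome may only appoint new witnesses after a fresh expansionary stage, and nodes guessing the finite outcome are \emph{reset} at each expansionary stage. The crucial decomposition (Lemma~\ref{semiparttwo}) is $W_j\cap\overline A$ infinite iff $W_j\cap Z^j$ is infinite or $W_j-(Y^j\cup A)$ has infinitely many expansionary stages, where $Y^j$ is the c.e.\ set of all witnesses at levels $\leq j$ and $Z^j$ the c.e.\ set of witnesses that become inactive without entering $A$, together with Lemma~\ref{subseteqstar} ($Y_h\cap\overline A\subseteq^* Z_h$). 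Your $V_e$ is morally trying to play the role of $Z^j$ together with the expansionary-stage counter, but without the tree and the reset discipline you have no mechanism that turns ``infinitely many expansionary stages'' into ``actually infinitely many permanent elements,'' and you have no way to certify that each $S_{e,n}$ injects only finitely many marks when the non-semilow requirement above it never settles (a case you must handle because during the construction you cannot know whether $\varphi^{\emptyset'}_e$ converges in the limit). I would recommend adopting the $F_k$-coding and then working out the $L_j$/$P_h^\alpha$ interaction on a tree as the paper does; the reservation idea alone does not carry the semilow$_{1.5}$ verification.
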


\begin{Cor} The nonlow \ce degrees are precisely the degrees of \ce sets that have semilow$_{1.5}$ complement but not semilow complement.
\end{Cor}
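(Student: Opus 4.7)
The plan is to deduce the corollary from Theorem~\ref{onefivenotsemilow} together with one simple observation about low sets. There are two directions to verify.

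For the forward direction, suppose $\mathbf{d}$ is a nonlow \ce degree. Then Theorem~\ref{onefivenotsemilow} directly provides a \ce set $A\in\mathbf{d}$ whose complement is semilow$_{1.5}$ but not semilow, so $\mathbf{d}$ is the degree of such a set.

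For the reverse direction, suppose $\mathbf{d}$ is a \ce degree containing some \ce set $A$ whose complement is semilow$_{1.5}$ but not semilow. I claim $\mathbf{d}$ must be nonlow. This is where we use the standard fact, recalled in the introduction, that if $A$ is a low \ce set then the index set $\{e\mid W_e\cap \overline{A}\neq\emptyset\}$ is $\Sigma^{\overline{A}}_1$ and hence $\Delta^0_2$, so $\overline{A}$ is automatically semilow. By contrapositive, since $\overline{A}$ is \emph{not} semilow, $A$ cannot be low, so $\mathbf{d}=\deg(A)$ is not low. (Neither of the semilow$_{1.5}$ nor non-semilow clauses is strictly needed for this direction; only the failure of semilow is used.)

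Combining the two directions, the nonlow \ce degrees are exactly the \ce degrees of \ce sets whose complements are semilow$_{1.5}$ but not semilow. There is no obstacle here beyond invoking the preceding theorem and the elementary implication ``low $\Rightarrow$ semilow complement''; the substantive content is entirely in Theorem~\ref{onefivenotsemilow}.
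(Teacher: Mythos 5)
Your proposal is correct and matches the paper's one-line justification exactly: one direction is Theorem~\ref{onefivenotsemilow}, and the other is the observation that every low \ce set has semilow complement (so a \ce degree containing a set without semilow complement must be nonlow). You have simply spelled out the same argument in more detail.
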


The corollary follows immediately from the theorem, since every low \ce set has semilow complement.

\begin{proof}

  Given a nonlow \ce set $D$, we will construct an $A\equivT D$ such
  that $\ov A$ is semilow$_{1.5}$ but not semilow.

  \subsection{Requirements:}

  To ensure that $D\leT A$, we will code $D$ into $A$.  To begin, we
  construct a computable list of disjoint finite sets
  $F_k\subset \omega$, with $F_0$ containing $1$ element and each
  other $F_k$ having $2k^2$ elements.

  C$_k$: $k\in D$ if and only if some element of $F_k$ is in $A$.

  To make $\ov A$ not semilow, we proceed similarly as in Theorem
  \ref{Q5} by meeting the following requirement for each total
  computable function $h$ on two inputs:

  P$_h$: $\displaystyle \lim_s h(e,s)$ is not the characteristic
  function of $\{e \ | \ W_e\cap \ov A\neq \es\}$.

  Let $\{\phi_i\}_{i\in\omega}$ be a listing of all partial computable
  functions on two inputs.  Then we must meet P$_h$ for all
  $h=\phi_i$.  We find it easier to refer to $h$ instead of $i$ in
  this situation.  However, when we say $h<j$, please note that if we
  are using $h$ to stand in for $\phi_i$, then we mean that $i<j$.



  We will try to meet P$_h$ for each $\alpha$ of length $i$ for
  $h=\phi_i$ on our tree of strategies, and for each $e\in \omega$, so
  we will label our requirements P$_{h,e}^\alpha$.  To meet
  P$_h^\alpha$, for each $e$, if P$_{h,e}^\alpha$ has been reset $n$
  times, then we build $S_{h,e}^\alpha=W_{r(\alpha, e, n)}$, where $r$
  is known in advance by the Recursion Theorem with Parameters.
  If $h$ guesses that $S_{h,e}^\alpha\cap \ov A$ is empty, we add an
  element, called a {\em witness}, to $S_{h,e}^\alpha$.  If $h$
  changes its guess, then we want to enumerate the witness into $A$.
  When P$_{h,e}^\alpha$ gets reset, its witnesses become inactive.  We
  will meet the requirement for some $e$ and $\alpha$, so that
  $W_{r(\alpha,e,n)}\cap \ov A$ is nonempty if and only if
  $\lim_s h(r(\alpha,e,n),s)\neq 1$.

  Define $Y_h=\{x \ | \ x$ is ever a witness for some
  P$_{h,e}^\alpha$, for any $e$ and $\alpha\}$ and

  $Z_h=\{x\ | \ x$ is a witness in $Y_h$ that becomes inactive but
  does not enter $A\}.$

  Let $Y^j$ be defined as $\displaystyle \bigcup_{h\leq j} Y_h$ and
  $Z^j$ be defined as $\displaystyle \bigcup_{h\leq j} Z_h$.  Thus
  $Y^j$ is the set of witnesses for nodes of length at most $j$ and
  $Z^j$ is the subset of $Y^j$ of witnesses that become inactive.

  For $A$ to have semilow$_{1.5}$ complement, we need to ensure:

  L: $\{ j\ | \ W_j\cap \ov A $ infinite$\}\leq_m $ Inf.

  To do this, we will show that $W_j\cap \ov A$ is infinite if and
  only if either $W_j\cap Z^j$ is infinite or $W_j-(Y^j\cup A)$ has
  infinitely many expansionary stages, where a stage is expansionary
  if $(W_j-(Y^j\cup A))[s]$ is larger than it has been at any previous
  stage.  Since $W_j\cap Z^j$ and the set of expansionary stages of
  $W_j-(Y^j\cup A)$ are both \ce sets, this will give us the desired
  $m$-reduction.

  As mentioned, we will work on a binary tree of strategies.  Let
  $\alpha$ be a length $i$ string on the tree.  For each $j<i$,
  $\alpha(j)=0$ indicates a guess that $W_j-(Y^j\cup A)$ has
  infinitely many expansionary stages and $\alpha(j)=1$ is a guess
  that it has finitely many.  P$_h^\alpha$ will work to satisfy P$_h$
  while assuming that $\alpha$ is correctly guessing the true outcome.

  The tree allows us to ensure that if $W_j-(Y^j\cup A)$ has
  infinitely many expansionary stages, it is in fact an infinite set.
  The nodes guessing that there are infinitely many expansionary
  stages are not allowed to add witnesses when L$_j$ is injured.
  Instead, they wait until a new expansionary stage is reached.  This
  way, they do not create witnesses that may later enter
  $W_j-(Y^j\cup A)$, only to be forced into $A$.  In addition, nodes
  guessing that $W_j-(Y^j\cup A)$ has finitely many expansionary
  stages are reset each time $W_j-(Y^j\cup A)$ reaches a new
  expansionary stage, meaning that any of their current witnesses will
  never enter $A$.  The combination of these two actions guarantees
  that if $W_j-(Y^j\cup A)$ has infinitely many expansionary stages,
  it has infinitely many elements that never enter $A$ and so it is
  infinite.

  In addition, we need that $A\leT D$, which we achieve by a nonlow
  permitting argument using a technique of Downey, Jockusch, and
  Schupp \cite{MR3125901}.  An element only becomes a witness for
  P$_{h,e}^\alpha$ when $\Phi^D_e(e)[s]\converges$.  If this computation
  later converges, such a witness gets enumerated into $A$.  For each
  $\alpha$, we build a computable function on two inputs,
  $\ell_\alpha(e,s)$.  If P$_{h,e}^\alpha$ wishes to enumerate a
  witness into $A$ at stage $s$, we set $\ell_\alpha(e,s)=1$.  
  Otherwise $\ell_\alpha(e,s)=0$.  If the computation
  $\Phi^D_e(e)[s]$ is injured, then $D$ permits the witness to enter
  $A$ and so the witness enters $A$ as desired.
  We will only enumerate witnesses when permitted, so that
  we will achieve $A\leT D$.  Thus, we must ensure that $D$ permits
  frequently enough.  Since $D$ is not low and $\ell_\alpha$ is computable,
  we know that $K^D(e)$ is not the limit of $\ell_\alpha(e,s)$ as $s$ 
  approaches infinity, else $K^D$ would be limit computable and thus
  $D$ would be low.  Therefore, there
  must be some $e$ such that $K^D(e)\neq \lim_s \ell_\alpha(e,s)$.  
  When P$_{h,e}^\alpha$ wishes to enumerate an element, 
  $\ell_\al(e,s)=1$, so $\Phi^D_e(e)$ must diverge.  When the current
  computation is injured by a new element entering $D$ below the use, P$_{h,e}^\al$
  has permission to enumerate the element into $A$.  Thus, for each $\al$ on the true path, there is some
  $e$ such that $D$ will give permission for P$_{h,e}^\al$ to enumerate elements into $A$ as often as needed.

  \subsection{Construction}

  For each $k$, we define a finite set $F_k$.  Each of these sets will
  be contained within $\omega^{[0]}$, where $\omega^{[n]}$ is the
  $n$th ``row'' of $\omega$, the set of all elements of the form
  $\<x,n\>$ using the standard pairing function.  We let $F_0$ be the
  singleton set $\{\<0,0\>\}$, and for $k>0$, we let $F_k$ be the
  least $2k^2$ elements in $\omega^{[0]}$ that have not appeared in
  any $F_{k'}$ for $k'<k$.

  We build an approximation $\delta_s$ to the true path.  We call $s$
  an $\alpha$-stage if $\alpha$ is a substring of $\delta_s$.  Define
  $\delta_0=\lambda$, the empty string.  Let $\delta_s$ be the length
  $s$ string defined by: $\delta_s(j)=0$ if $W_j-(Y^j\cup A)$ has
  reached a new expansionary stage since the last
  $\delta_s\res j$-stage, and $1$ otherwise.

  Let $\ell_\alpha(e,0)=0$ and $\ell_\alpha(e,s+1)=\ell_\alpha(e,s)$
  unless otherwise specified.
  We begin with all L$_j$ allowing addition of new witnesses for every
  P$_h^\alpha$.  During the construction, in order to prevent further injury
  to itself, L$_j$ may disallow some P$_h^\alpha$ from adding new witnesses.
  However, we will show that if $\alpha$ is on the true path, the construction
  ensures that P$_h^\alpha$ will be allowed to add witnesses infinitely often.

  Nodes on the tree may be put in 1-1 correspondence with
  $\mathbb N-\{0\}$ via a function $f:2^{<\omega}\ra \mathbb N-\{0\}$
  .  Using such a correspondence, let
  $\langle \alpha, e\rangle :=\langle f(\alpha), e\rangle$.  In Step
  3, witnesses come from $\omega^{[\langle \alpha, e\rangle]}$.

  {\em Stage $s=0$}: $\ell_\al(e,0)=0$ and all sets $S_{h,e}^\al$
  begin empty for all $\al$, $h$, and $e$.

  {\em Stage $s+1$}:

  {\bf Step 1} (new expansionary stage): For each $j\in \omega$, if
  $W_j-(Y^j\cup A)$ grows to a size we haven't seen before, i.e. it
  reaches a new expansionary stage, then do the following for each
  $\alpha$ of length greater than $j$: 
  If $\alpha$ guesses that $W_j-(Y^j\cup A)$ has infinitely many
  expansionary stages (i.e. $\alpha(j)=0$), then L$_j$ now allows
  addition of new witnesses for P$_h^\alpha$.  If $\alpha$ guesses
  that $W_j-(Y^j\cup A)$ has finitely many expansionary stages
  ($\alpha(j)=1$), we reset P$_{h,e}^\alpha$ for all $e$, making
  $\ell_\alpha(e,s)=0$ and declaring P$_{h,e}^\alpha$ and any
  witnesses for P$^\alpha_{h,e}$ inactive.  When reset, we will start
  a new $S_{h,e}^\alpha$ that begins empty.
  This will ensure in Lemma \ref{semilowonefive} that nodes guessing incorrectly that $W_j-(Y^j\cup A)$ has finitely many expansionary stages will not prevent $W_j\cap \ov A$ from being infinite by putting too many elements into $A$.

  {\bf Step 2} (allowing some disallowed witnesses): We say a node of
  length $j$ {\em is allowed} new witnesses if no $j'<j$ disallows
  P$_h^\alpha$ from adding witnesses.  For each $j$, $N$, and $\beta$,
  where $\beta$ has length $j$, if L$_j$ currently disallows for $N$
  (as defined in Step 5) and if $\beta$ is allowed new witnesses, then
  set L$_j$ to allow new witnesses for all $\alpha$ unless Step 2 has
  acted at some previous stage for these particular $j$, $N$, and
  $\beta$.
  Do this for all possible triples, of which there are only finitely
  many.  This step guarantees that nodes on the true path will be
  allowed to act in Step 3 infinitely often.

  {\bf Step 3} (adding witnesses): We act for each $\alpha$
  of length at most $s$
  such that P$_h^\alpha$
  is allowed to add witnesses by each L$_j$ with $j<s$ such that
  $\alpha(j)=0$, 
  and such that either $s$
  is an $\alpha$-stage
  or $\alpha$
  is to the left of $\delta_s$
  and if $s_\alpha$
  was the last $\alpha$-stage,
  P$_h^\alpha$
  has not been allowed to act at any stage $t$,
  $s_\alpha\leq
  t<s$.  In other words, we act on the approximation to the true path
  $\delta_s$
  and for any nodes to the left of $\delta_s$
  that have not been allowed to act since before they were last on the
  approximation to the true path.  We act in length-lexicographic
  order. For each such $\alpha$,
  we ask if there is any $e<s$ such that

  \begin{itemize}
  \item P$_{h,e}^\alpha$ is inactive,
  \item $\Phi_e^D(e)[s]\converges$,
  \item $h(r(\alpha,e,n))=0$
    for $n$ the number of times P$_{h,e}^\alpha$ has been reset, and
  \item for all $e'<
    e$, $\Phi_{e'}^D(e')[s]\converges\iff
    \ell_\alpha(e',s)=1$ or
    $\Phi_{e'}^D(e')[s]\converges$
    and has changed computations at least $e$
    times since the last time that P$_{h,e}^\alpha$ added a witness.
  \end{itemize}

  If so, then for each such $e$, let $x>s$,
  $x\in \omega^{[\langle \alpha, e\rangle]}$, be a fresh witness,
  larger than any previously chosen witness.  Enumerate
  $x\in S_{h,e}^\alpha$.  Declare $x$ to have use $s$, which we note
  is larger than the use of
  $\Phi_e^D(e)[s]$.  
  We call both P$_{h,e}^\alpha$ and $x$ {\em active}.

  {\bf Step 4} (changing $\ell_\alpha$ when $h$ changes its guess): If
  P$_{h,e}^\alpha$ is active, $\ell_\alpha(e,s)=0$, and
  $h(r(\alpha, e, n),s+1)=1$ where $n$ is the number of times
  P$_{h,e}^\alpha$ has been reset, then set $\ell_\alpha(e,s+1)=1$.
  Reset P$_{h,e'}^\alpha$ for all $e'>e$ by making P$_{h,e'}^\alpha$
  and all elements in $S_{h,e'}^\alpha$ inactive and starting new
  empty sets $S_{h,e'}^\alpha$, as well as setting
  $\ell_\alpha(e',s+1)=0$. Perform this step for each $\alpha$ and for
  the least $e$ such that it applies.

  {\bf Step 5} (enumerating witnesses into A and disallowing new
  witnesses): If $x\in S_{h,e}^\alpha$ is active with use $u$ and
  $D_{s+1}\res u\neq D_{s}\res u$, then put $x$ into $A_{s+1}$ and set
  $\ell_\alpha(e,s+1)=0$.  
  Declare P$_{h,e}^\alpha$ and $x$ inactive.

  If by such an enumeration, we cause an element of
  $(W_j-(Y^j\cup A))[s]$ to enter $A$, then we say L$_j$ does not {\em
    allow} P$_h^\alpha$ to add witnesses for any $\alpha$ such that
  $\alpha(j)=0$.  In addition, if the size of $(W_j-(Y^j\cup A))$ has
  reached the number $N$, we say that L$_j$ {\em disallows for
    $N$}.

%
%

  {\bf Step 6} (coding D): If $k\in D_{s+1}-D_s$, enumerate one
  element from $F_k$ into $A_{s+1}$ such that for each $M>0$, if
  $j+M=k$, then the element is not one of the
  least 
  $M$ elements in $(W_j-(Y^j\cup
  A))[s]$.  

  \subsection{Verification}

  \begin{lemma}\label{semilowonefive} For each $j\in \omega$, if the
    set of expansionary stages of $W_j-(Y^j\cup A)$ is infinite, then
    $W_j\cap \ov A$ is infinite.
  \end{lemma}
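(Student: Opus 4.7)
The plan is to fix $j$, assume that $(W_j-(Y^j\cup A))$ has infinitely many expansionary stages, and produce for each target $N$ at least $N$ elements of $W_j\cap\ov A$. I will pick $s^*$ a sufficiently late expansionary stage and show that, among the least several elements of $(W_j-(Y^j\cup A))[s^*]$, at least $N$ remain in $\ov A$ forever.

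The first observation, from the freshness clause in Step 3, is that any $x\in(W_j-(Y^j\cup A))[s^*]$ satisfies $x<s^*$ and so is never chosen as a new witness at any stage $\ge s^*$; in particular $x$ cannot enter $Y^j$ after $s^*$. Hence the only ways $x$ may still enter $A$ are (i) via Step 5, as an already-active witness for some P$_{h,e}^\al$ with $|\al|>j$, or (ii) via Step 6, when $x\in F_k$ for some $k$ entering $D$ after $s^*$. Since Step 3 draws witnesses from $\omega^{[\<\al,e\>]}$ with $\<\al,e\>\ge 1$ while the $F_k$'s partition $\omega^{[0]}$, these two routes are disjoint on each individual element.

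For route (i), the tree does the main work. If $x$'s hosting node $\al$ satisfies $\al(j)=1$, then Step 1 at $s^*$ resets P$_{h,e}^\al$, rendering $x$ permanently inactive and hence safe, so only witnesses for $\al$ with $\al(j)=0$ are genuinely dangerous. Each enumeration of such an $x$ into $A$ triggers L$_j$ to disallow all $\al(j)=0$ nodes from adding new witnesses, with re-allowance only through Step 1 at later expansionary stages or through Step 2, which acts at most once per triple $(j,N,\be)$ with $|\be|=j$; crucially, no new $\al(j)=0$ witnesses are created while L$_j$ is disallowed, bounding the Step 5 losses in terms of $j$ and the outcomes of L$_{j'}$ for $j'\le j$ on the true path. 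For route (ii), because elements entering $W_j$ after $s^*$ exceed $s^*$, the rank of any $x_i\in(W_j-(Y^j\cup A))[s^*]$ in the current set can only decrease after $s^*$; then Step 6's protection clause forces any Step 6 enumeration of $x_i$ to proceed through some $k_i<j+i$, and distinctness of the $k_i$'s over different $i$ together with the limited density of low-$k$ codings controls the loss.

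Assembling the two bounds, for $s^*$ late enough and $N^*$ sufficiently large, at least $N$ of the least $N^*$ elements of $(W_j-(Y^j\cup A))[s^*]$ remain in $W_j\cap\ov A$, proving the lemma. The main obstacle is the Step 5 bookkeeping: one must interlock the Step 5 disallowance, Step 2's per-triple budget, and Step 1's expansionary re-allowance to extract a bound uniform in $N^*$. The guiding idea, in line with the paper's heuristic, is that $\al(j)=0$ nodes create new witnesses only in the brief allowed windows immediately following expansionary stages, so such witnesses cannot later flood $(W_j-(Y^j\cup A))$ only to be forced into $A$.
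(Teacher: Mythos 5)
Your high-level picture of the dynamics is right in several places: route (i) vs.\ route (ii) disjointness via the rows $\omega^{[\langle\alpha,e\rangle]}$ vs.\ $\omega^{[0]}$; the observation that $\alpha(j)=1$ witnesses are inactivated at the chosen expansionary stage $s^*$; and the observation that elements of $(W_j-(Y^j\cup A))[s^*]$ can never become new witnesses because they are $<s^*$. Your Step~6 bound also works once you unpack it: take $s^*$ late enough that all $C_k$ with $k<j+N^*$ have already acted or never will; then any later $C_k$ has $k\geq j+N^*$ and its protection clause shields the least $N^*$ positions, whose membership (as you note) only improves over time.

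The genuine gap is exactly the one you flag at the end: there is no control on how many of the elements of $(W_j-(Y^j\cup A))[s^*]$ are currently \emph{active} $\alpha(j)=0$ witnesses. The $L_j$ disallowance and Step~2's per-triple budget govern the \emph{creation of new witnesses}, not the Step~5 enumeration into $A$ of witnesses that were already active before $s^*$. Those already-active witnesses are simply sitting there with uses below the current $D$-use, and a single $D$-change can sweep an unbounded number of them into $A$ simultaneously; nothing in the $L_j$ bookkeeping caps how many of the least $N^*$ elements at stage $s^*$ are such witnesses. So the "bound uniform in $N^*$'' you are hoping to extract does not exist in the form your argument needs, and the final assembling step does not go through.

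The paper avoids this counting problem entirely by proving, by induction on $M$, that $|W_j-(Y^j\cup A)|\geq M$. The induction hypothesis stabilizes the least $M-1$ positions, after which a stage $s_2$ is chosen so that $x_M$ is the \emph{least} element ever in position $M$. One is then arguing about a single position, not about a fraction of a block. When $x_M$ enters $A$ via Step~5, all active witnesses larger than $x_M$ (hence with larger use) enter too, so the new occupant of position $M$ cannot have been an active witness at that moment; since it is also too small to be freshly appointed, and the only witness role it could subsequently acquire before the next expansionary stage (an $\alpha(j)=1$ role) is killed off by Step~1 at that expansionary stage, the new occupant of position $M$ can never again enter $A$. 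The induction is what lets minimality of $x_M$ do the work that your global Step~5 bound would have had to do, and without it I do not see how to close the gap.
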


  \begin{proof}

    Assume the set of expansionary stages of $W_j-(Y^j\cup A)$ is
    infinite.  We will show that for each $M$, $W_j-(Y^j\cup A)$ has
    at least $M$ elements.

    No element in $(W_j-(Y^j\cup A))[s]$ ever enters $Y^j$ since new
    witnesses are chosen to be larger than $s$, and elements of
    $(W_j-(Y^j\cup A))[s]$ are smaller than $s$.%

    Induct on $M$.  Assume true for
    $M-1$.  

    For $k\leq
    j$, only finitely many elements are ever put into $A$ by
    C$_k$.
    For $k>j$,
    C$_k$
    can only bring the size of $W_j-(Y^j\cup
    A)$ below $M$ if $M+j>k$, which happens finitely often.  Let
    $s_0$
    be a stage by which the least $M-1$
    elements in $(W_j-(Y^j\cup
    A))[s_0]$ never enter $A$ and after which no
    C$_k$
    enumerates any of the least $M$
    elements of $(W_j-(Y^j\cup
    A))[s]$ into $A$.  Let
    $s_1>s_0$
    be a stage after which Step 2 never acts for ($j,$
    $M-1$,
    $\beta$)
    for any $\beta$
    of length $j$.
    We may assume that at every stage $s>s_1$,
    the $M$th
    least element of $(W_j-(Y^j\cup
    A))[s]$ is a witness for some
    P$_{h,e}^\alpha$,
    else it would never enter $A$
    and we would be guaranteed to have at least $M$
    elements in $W_j-(Y^j\cup
    A)$, as desired.  Note that the length of
    $\al$ is greater than $j$ since the witnesses are not in $Y^j$.

    Let $s_2>s_1$
    be a stage such the $M$th
    least element in $(W_j-(Y^j\cup
    A))[s_2]$, called
    $x_M$,
    is the least that will ever be in the $M$th
    position of any $(W_j-(Y^j\cup
    A))[s]$ for $s\geq s_2$.  Now, if $x_M$ never enters
    $A$
    by Step 5, we are done, so assume $x_M$
    enters $A$.
    When $x_M$
    enters $A$,
    all larger witnesses also enter $A$,
    and since the next element in the $M$th
    position is a witness, and it cannot be less than $x_M$
    due to minimality, then the next element that enters the
    $M$th
    position has yet to become a witness.  When $x_M$
    enters $A$,
    L$_j$
    does not allow any P$_h^\alpha$
    to add witnesses for any $\alpha$
    such that $\alpha(j)=0$.
    Step 2 has finished acting for $j$
    and $M-1$,
    so it will not cause L$_j$
    to later allow.  The next time L$_j$
    allows will be by Step 1, which means $(W_j-(Y^j\cup
    A))$ has already reached a new expansionary stage.  Thus, at the
    first expansionary stage after $x_M$
    enters $A$,
    the only witness that could be in the $M$th
    position must be a witness for an $\alpha$
    with $\alpha(j)=1$.
    At that new expansionary stage, Step 1 inactivates the witnesses
    for $\alpha$.
    Thus the element in the $M$th
    position can never enter $A$,
    so $(W_j-(Y^j\cup A))$ has at least $M$
    elements. 

  \end{proof}

  We say that $\alpha$ is on the {\em true path} if $\alpha(j)=0$ if
  and only if $W_j-(Y^j\cap \ov A)$ has infinitely many expansionary
  stages during the construction.


  \begin{lemma}\label{notsemi} Assume $\alpha$ is on the true path.
    Let $e$ be the least such that
    $\displaystyle \lim_s \ell_\alpha(e,s)\neq K^D(e)$.  Then
    P$_{h,e}^\alpha$ is reset finitely often ($n$ times) and
    $\displaystyle \lim_s h(r(\alpha, e,n), s)=0\iff W_{r(\alpha, e,
      n)}\cap \ov A $ is nonempty.  Thus, $\ov A$ is not semilow.
  \end{lemma}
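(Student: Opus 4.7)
My plan is to establish the two assertions of the lemma by exploiting that $\alpha$ is on the true path and $e$ is the least index with $\lim_s \ell_\alpha(e,s)\neq K^D(e)$. For the first assertion, I argue that $P_{h,e}^\alpha$ is reset only via Step 1 (triggered by a new expansionary stage of $W_j-(Y^j\cup A)$ for some $j$ with $\alpha(j)=1$) or via Step 4 acting on some $e'<e$. Since $\alpha$ is on the true path, $\alpha(j)=1$ forces only finitely many expansionary stages for $j$, giving finitely many Step 1 resets. By the minimality of $e$, $\ell_\alpha(e',\cdot)$ stabilizes at $K^D(e')$ for each $e'<e$, and each Step 4 firing at $e'$ is a transition of $\ell_\alpha(e',\cdot)$ from $0$ to $1$, so only finitely many Step 4 firings occur per $e'<e$. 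Hence $n$ is finite; set $W:=W_{r(\alpha,e,n)}$, the final incarnation of $S_{h,e}^\alpha$.

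For the biconditional, fix $s_0$ past the last reset and past which $\ell_\alpha(e',\cdot)=K^D(e')$ and $\Phi_{e'}^D(e')[s]\converges \iff K^D(e')=1$ hold uniformly for every $e'<e$. Then the fourth bullet of Step 3 is met for all $s\geq s_0$, and Step 2 yields infinitely many stages at which $\alpha$ is allowed to add witnesses. I first rule out the case $\lim_s\ell_\alpha(e,s)=1$ with $K^D(e)=0$: because $\Phi_e^D(e)\diverges$, for any witness with use $u$ chosen above the current computation use, $D$ must change below $u$ at some later stage, so Step 5 fires and sends $\ell_\alpha$ back to $0$, contradicting stabilization at $1$. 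For the remaining possibilities ($\lim_s \ell_\alpha(e,s)=0$ with $K^D(e)=1$, or $\ell_\alpha(e,\cdot)$ oscillating) I verify both directions of the biconditional. If $\lim_s h(r(\alpha,e,n),s)\neq 0$, then $h$ takes value $1$ at infinitely many stages, Step 4 fires for any active witness, and since $\lim_s\ell_\alpha(e,s)\neq 1$ and no further resets occur past $s_0$, Step 5 must subsequently fire, enumerating every added witness into $A$, so $W\cap\ov A=\emptyset$.

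The principal obstacle is the forward direction when $\lim_s h=0$, $\lim_s\ell_\alpha(e,s)=0$, and $K^D(e)=1$: I must exhibit a witness that persists in $W\cap\ov A$. With $h$ eventually $0$, Step 4 cannot fire for $e$, so the only way to lose a witness is via Step 5, which requires a $D$-change below its use. The plan is to exploit the stable use $u^*$ of $\Phi_e^D(e)$ together with the Downey--Jockusch--Schupp permitting structure built into $\ell_\alpha$ (in particular the fact that each new witness has its use chosen above the current use of $\Phi_e^D(e)[s]$) to produce a witness added sufficiently late whose use is never subsequently attacked by $D$; such a witness remains in $W\cap\ov A$. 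Combining both directions, the biconditional asserts that $h$ returns the \emph{opposite} of the characteristic function of $\{j:W_j\cap\ov A\neq\emptyset\}$ at the index $r(\alpha,e,n)$. Since $h$ was arbitrary among partial computable functions of two variables, the index set is not limit computable, and hence $\ov A$ is not semilow.
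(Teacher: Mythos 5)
Your first part (finite resets) and your elimination of the case $\lim_s \ell_\alpha(e,s)=1$ with $K^D(e)=0$ are sound and broadly parallel the paper's opening. Likewise, your contrapositive of the ``backward'' direction ($\lim_s h\neq 0\Rightarrow W_{r(\alpha,e,n)}\cap\ov A=\emptyset$) tracks the paper's Case 1, read in the other direction: a witness that persists in $\ov A$ was appointed against a permanent $\Phi_e^D(e)$-computation, so once $h$ switches to $1$ and Step 4 raises $\ell_\alpha(e,\cdot)$ to $1$ there is no Step 5 event to lower it, forcing $\lim_s\ell_\alpha(e,\cdot)=1=K^D(e)$, a contradiction.

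The genuine gap is in your forward direction. You propose to \emph{exhibit a permanent witness}: pick one appointed sufficiently late that its use is never subsequently attacked by $D$. That plan does not match the construction as written. In Step 3 of Theorem~\ref{onefivenotsemilow} the use assigned to a witness $x$ is the \emph{stage} $s$, not $\phi_e^D(e)[s]$; the paper even notes that $s$ is strictly larger than the computation use. Consequently, even after $\Phi_e^D(e)$ has settled with stable use $u^*$, a witness appointed at stage $s$ can still be sent into $A$ by any $D$-change in the window $[u^*,s)$, and nothing in the nonlow permitting machinery forbids $D$ from enumerating new elements into such windows indefinitely. So the ``witness added sufficiently late whose use is never attacked'' may simply not exist, and you have not shown it does. (The analogous construction in Theorem~\ref{TwoOSPnotOneFive} really does use $\phi_e^D(e)[s]$ as the witness use; under that convention your plan would be closer to workable, but that is not the theorem at hand.)

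The paper's own Case 2 sidesteps this entirely: it never asserts the existence of a permanent witness. Instead, assuming $S_{h,e}^\alpha\cap\ov A=\emptyset$, it splits on whether infinitely many witnesses are appointed. If infinitely many are appointed and all enter $A$, it argues via the behaviour of $\ell_\alpha$ and $h$ that $\lim_s h(r(\alpha,e,n),s)$ fails to exist; if only finitely many are appointed, it shows (using the ``allowed to act'' claim that you only gesture at) that under $\lim_s h=0$ the Step-3 conditions are eventually always satisfiable, so another witness must be appointed — a contradiction. In neither branch is a permanent witness produced; the contradiction is structural. To repair your proposal you would either have to supply that case split in place of your permanent-witness plan, or justify carefully, from the actual use convention and the dynamics of $\ell_\alpha$ and Step 5, why a witness must eventually survive — which your sketch does not do.
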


  \begin{proof} P$_{h,e}^\alpha$ can only be reset in two ways: In
    Step 1 by $W_j-(Y^j\cup A)$ growing while $\alpha$ is guessing it
    has finitely many expansionary stages, which can only happen
    finitely often for the correct $\alpha$, and by Step 4 acting for
    P$_{h,e'}^\alpha$, for $e'<e$.  Such a P$_{h,e'}^\alpha$ only
    resets P$_{h,e}^\alpha$ when $\ell_\alpha(e',s)$ changes to 1 from
    0.  This can only happen finitely often since
    $\displaystyle \lim_s \ell_\alpha(e',s)$ must exist for $e'<e$.
    Thus, P$_{h,e}^\alpha$ is reset $n$ times for some finite $n$.


    {\em Case 1}: Suppose $S^\al_{h,e}\cap \ov A$ is nonempty, for
    $S^\al_{h,e}=W_{r(\al, e, n)}$.

    Then there is some $x\in S_{h,e}^\al\cap \ov A$ that was
    enumerated into $S_{h,e}^\alpha$ by Step 3 at stage $s_0+1$ when
    $\Phi_e^D(e)[s_0]\converges$ and $\ell_\alpha(e,s_0)=0$.  We want
    to show that $\lim_s h(r(\alpha,e,n),s)=0$.

    Assume $h(r(\alpha,e,n),t)$ equals 1 at some stage $t>s_0$, so
    $\ell_\al(e,t)=1$.  Since $x\in \ov A$,
    $\Phi_e^D(e)[s_0]=\Phi_e^D(e)\converges$ and both computations are
    the
    same. 
    Since $e$ was chosen so that $\lim_s \ell_\alpha(e,s)\neq K^D(e)$,
    we know that $\ell_\alpha(e,s)$ must change back to 0, which can
    only happen in Step 5.  However, this can't happen since
    $\Phi_e^D(e)[s_0]\converges$ and never changes below the use.
    This gives a contradiction.  Thus, if $S_{h,e}^\alpha\cap \ov A$
    is nonempty, then $\displaystyle \lim_s h(r(\alpha, e, n),s)=0$.

    {\em Case 2:} Suppose $S_{h,e}^\alpha\cap \ov A$ is empty.

    First, note that if $S_{h,e}^\al$ has infinitely many witnesses,
    all of which enter $A$, then Step 5 acts infinitely often, meaning
    that $\Phi_e^D(e)[s]$ changes infinitely often, so $\Phi_e^D(e)$
    diverges, and thus $\lim_s \ell_\al(e,s)\neq 0$.  Since
    $\ell_\al(e,s)=1$ infinitely often, we must have that
    $h(r(\al,e,n),s)$ switches from $0$ to $1$ infinitely often, as it
    must be $0$ when each witness is appointed and $1$ when
    $\ell_\al(e,s)$ changes to 1.  Thus $\lim_s h(r(\al,e,n,s)$
    doesn't exist.

    We may thus assume that after a finite stage,
    $S_{h,e}^\alpha\cap \ov A$ is empty and never gets more elements.
    We may also assume that $\lim_s h(r(\alpha, e, n),s)=0$, else we
    have achieved our goal.  We now show that P$_{h,e}^\alpha$ will
    eventually be able to act by adding a new witness in Step 3,
    contradicting that $S_{h,e}^\alpha$ never gets a new element.  We
    check that all the conditions of Step 3 will eventually be met.

    For the first bullet point, we know that P$_{h,e}^\alpha$ will be
    inactive for almost all $s$ since it is only active when
    $S_{h,e}^\alpha\cap \ov A$ is nonempty.  Since P$_{h,e}^\alpha$ is
    inactive, we know that $\ell_\alpha(e,s)=0$ at almost all stages,
    as it only can be 1 when P$_{h,e}^\alpha$ is active.  Now, we also
    know that $\lim_s \ell_\alpha(e,s)\neq K^D(e)$, so
    $\Phi^D_e(e)\converges$.  This gives us the second bullet point
    for almost all $s$.  By the assumption in the previous paragraph,
    for almost all $s$, $h(r(\alpha, e, n),s)=0$, so we meet the third
    bullet point.

    To check that the fourth bullet point is met, note that for all
    $e'<e$, either $K^D(e')=1=\lim_s \ell_\alpha(e',s)$, in which case
    for almost all $s$, $e'$ does not prevent $e$ from acting, or
    $K^D(e')=0=\lim_s \ell_\alpha(e',s)$.  In the latter case, there
    are two possibilities.  If for almost all $s$,
    $\Phi^D_{e'}(e')[s]\diverges$, then for almost all $s$, $e'$ does
    not prevent $e$ from acting.  Otherwise, there are infinitely many
    stages such that $\Phi_{e'}^D(e')[s]$ converges, but since
    $\Phi_{e'}^D(e')\diverges$, the computation changes infinitely
    often, so eventually the computation will have changed at least
    $e$ times.  Thus, $e'$ only prevents $e$ from adding a new witness
    for finitely many stages.

    We have shown that for almost all $s$, the conditions in all the
    bullet points are met.  We still must show that $\alpha$ is {\em
      allowed} to add a witness at a stage where it is able to
    act.  

    {\em Claim}: For every $\alpha$ on the true path, $\alpha$ is
    allowed to add witnesses infinitely often.

    {\em Proof of Claim}: Induct on the length of $\alpha$.  Assume
    true for the predecessor of $\alpha$, $\alpha^-$.  Let $\alpha^-$
    have length $j$.  If $\alpha(j)=1$, then $\alpha$ is allowed to
    act at every stage that $\alpha^-$ is allowed to act.  If
    $\alpha(j)=0$, we need to show that infinitely often, when
    $\alpha^-$ is allowed to act, L$_j$ allows action as well.
    Suppose not.  Then there is some stage $s_0$ such that $\alpha$ is
    not allowed to act for any $s\geq s_0$.  %
    Each time $\alpha^-$ is allowed to act after stage $s_0$, L$_j$
    disallows $\alpha$ from adding new witnesses.  This means that
    Step 2 does not apply for $j$ at any of these
    stages.  
    We know that by the proof of Lemma \ref{semilowonefive}, L$_j$
    disallows for each $N$ at most finitely often.  Since $\alpha$ is
    on the true path, $W_j-(Y^j\cup A)$ has infinitely many
    expansionary stages, so L$_j$ eventually disallows only for $N$'s
    that Step 2 never acted for before stage $s_0$.  Thus eventually
    Step 2 will apply to $j$, $N$, and $\alpha^-$, so $\alpha$ will be
    allowed to add new elements, which gives a contradiction, proving
    the claim.

    Now wait until a stage $s_0$ such that the approximation to the
    true path never goes to the left of $\alpha$ and the bullet points
    do not prevent P$_{h,e}^\alpha$ from acting.  Let $s_1>s_0$ be the
    next $\alpha$-stage, and $s_2\geq s_1$ the next stage that
    $\alpha$ is allowed to add witnesses.  Then $s_2$ is either an
    $\alpha$-stage or $\delta_{s_2}$ is to the right of $\alpha$, so
    we may perform Step 3 for
    P$_h^\alpha$. 




    At infinitely many stages, Step 3 will be able to act by adding a
    witness.  Thus, it will act, contradicting that no new witnesses
    are added.  Therefore, if no new witnesses are ever added, then we
    must have that $\lim_s h(r(\alpha, e, n),s)$ does not equal $0$,
    satisfying the lemma.

%
  \end{proof}

  \begin{lemma}\label{subseteqstar} For each partial computable
    $\phi_i=h$, $Y_h\cap \ov A\subseteq^* Z_h$.
  \end{lemma}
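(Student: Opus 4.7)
The plan is to prove $(Y_h \cap \overline{A}) \setminus Z_h$ is finite, i.e., that only finitely many witnesses for $P_h$-strategies stay permanently active without entering $A$. I would begin from the structural observation that each strategy $P_{h,e}^\alpha$ has at most one currently active witness at any stage: Step 3 adds a new witness only when the strategy is inactive, and the strategy stays active until a Step 1 reset, a Step 4 reset from a smaller $e'$, or Step 5 enumeration into $A$ deactivates it. Consequently $|(Y_h \cap \overline{A}) \setminus Z_h|$ is bounded by the number of pairs $(\alpha, e)$ for which $P_{h,e}^\alpha$ has a permanently active witness not in $A$. With $h = \phi_i$ fixed, the $\alpha$ range over the finite set $\{0,1\}^i$, so it suffices to give a finite bound for each fixed $\alpha$.

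Next I would split by the position of $\alpha$ relative to the true path. If $\alpha$ lies to the right of the true path, let $j$ be the leftmost coordinate of difference, so $\alpha(j) = 1$ while the true path has $0$; then $W_j - (Y^j \cup A)$ has infinitely many expansionary stages, and by Step 1 each such stage resets $P_{h,e}^\alpha$ for every $e$, so no active witness for this $\alpha$ can survive forever. If $\alpha$ is to the left of the true path, the leftmost difference instead has $\alpha(j) = 0$ and true path $1$, so $\delta_s(j)$ is eventually $1$ forever, giving only finitely many $\alpha$-stages; since Step 3 allows at most one action per inter-$\alpha$-stage interval once $\alpha$ is to the left of $\delta_s$, only finitely many witnesses for $\alpha$ are ever appointed in total, which bounds the contribution.

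The main case is $\alpha$ on the true path. Here I would invoke the framework of Lemma \ref{notsemi} and let $e^*$ be the least $e$ with $\lim_s \ell_\alpha(e,s) \neq K^D(e)$. For $e < e^*$ the limit exists and equals $K^D(e)$: a short case analysis on $K^D(e) \in \{0,1\}$ shows each such strategy eventually has either no active witness (when $K^D(e) = 0$) or exactly one stable witness (when $K^D(e) = 1$), contributing at most $e^*$ permanent witnesses, with one more possibly from $e = e^*$. For $e > e^*$ I would argue that the fourth bullet of Step 3 eventually fails at $e' = e^*$: in the case $K^D(e^*) = 1$ and $\lim_s \ell_\alpha(e^*,s) = 0$, the computation $\Phi_{e^*}^D(e^*)$ stabilizes after some finite number $N$ of changes, so for $e > N$ the ``changed $\geq e$ times since last witness'' clause fails forever while the iff also fails; in the case where $\lim_s \ell_\alpha(e^*,s)$ does not exist, Step 4 is triggered at $e^*$ infinitely often, resetting every $P_{h,e}^\alpha$ for $e > e^*$ infinitely often, so no active witness can remain. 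The remaining possibility $\lim_s \ell_\alpha(e^*,s) = 1$ with $K^D(e^*) = 0$ can be excluded since a stable $\ell_\alpha(e^*, s) = 1$ would block Step 5 for $P_{h,e^*}^\alpha$, forcing $\Phi_{e^*}^D(e^*)\converges$ and contradicting $K^D(e^*) = 0$.

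I expect the main obstacle to be the careful bookkeeping needed to ensure that the finitely many $P_{h,e}^\alpha$ which happen to be active at the stage when bullet point $4$ begins to fail really do form the complete list of candidates for permanent active witnesses, and to handle any delayed Step 4 resets propagating from smaller $e'$. Once these details are in place, combining the three cases yields the desired finite bound on $(Y_h \cap \overline{A}) \setminus Z_h$.
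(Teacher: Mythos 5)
Your proof is correct and follows essentially the same strategy as the paper's: observe that each $P_{h,e}^\alpha$ has at most one active witness at a time, take the least $e^*$ with $\lim_s \ell_\alpha(e^*,s)\neq K^D(e^*)$ as in Lemma~\ref{notsemi}, and argue via the dichotomy between $\lim_s\ell_\alpha(e^*,s)$ not existing (Step 4 resets all $e'>e^*$ infinitely often) versus the limit being $0$ (so $\Phi^D_{e^*}(e^*)\converges$ and the fourth bullet of Step 3 eventually blocks all but finitely many $e'>e^*$). Your explicit trichotomy on $\alpha$ relative to the true path fills in cases the paper handles implicitly; otherwise the two arguments coincide.
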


  \begin{proof} Proving this lemma amounts to showing that there are
    only finitely many elements in $Y_h\cap \ov A$ that remain active.
    For each $e$ and each $\alpha$, there can only be one active
    element at a time.  Let $e$ be such that P$_{h,e}^\alpha$ is met,
    as in Lemma \ref{notsemi}.  Either $e$ resets all $e'>e$
    infinitely often or $\lim_s \ell_\alpha(e,s)$ exists.

    If $\lim_s \ell_\alpha(e,s)=0$, then $\Phi_e^D(e)\converges$.
    Then $\Phi_e^D(e)[s]$ changes computations some finite number of
    times, so by the final bullet of Step 3, only finitely many $e'>e$
    will ever be able to add witnesses, and will only be able to do so
    finitely
    often.  
    If $\lim_s \ell_\alpha(e,s)=1$, then $\Phi_e^D(e)\diverges$, but
    this is not possible since $\ell_\alpha(e,s)=0$ if
    $\Phi_e^D(e)[s]\diverges$.

    Thus, for a fixed $\alpha$ and $h$, either almost all
    P$_{h,e}^\alpha$ are reset infinitely often or only finitely many
    ever become active.  Since there are only finitely many $\alpha$
    of length $i$, $h=\phi_i$, we conclude that
    $Y_h\cap \ov A\subseteq^* Z_h$.
  \end{proof}

  \begin{lemma}\label{semiparttwo} For each $j\in \omega$,
    $W_j\cap \ov A$ is infinite if and only if either the set of
    expansionary stages of $W_j-(Y^j\cup A)$ is infinite or
    $W_j\cap Z^j$ is infinite.  Thus, $\ov A$ is semilow$_{1.5}$.
  \end{lemma}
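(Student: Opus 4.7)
My plan is to establish the biconditional directly by a case split on the finiteness of $W_j-(Y^j\cup A)$, and then turn it into the desired index-set reducibility using that both alternatives on the right-hand side are uniformly c.e.\ in $j$.

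For the backward direction, first note that $Z^j\subseteq\ov A$ by the very definition of $Z^j$ (its elements are witnesses that become inactive but never enter $A$), so $W_j\cap Z^j\subseteq W_j\cap\ov A$, and infinitude of the former forces infinitude of the latter.  If instead $W_j-(Y^j\cup A)$ has infinitely many expansionary stages, then Lemma~\ref{semilowonefive} gives immediately that $W_j\cap\ov A$ is infinite.

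For the forward direction, suppose $W_j\cap\ov A$ is infinite.  If the limit set $W_j-(Y^j\cup A)$ is itself infinite, then fixing any $N$ of its elements shows that eventually the stagewise approximation has size at least $N$ (these elements enter $W_j$ at some stage and by definition never join $Y^j\cup A$), so there are infinitely many expansionary stages.  Otherwise $W_j-(Y^j\cup A)$ is finite; since $W_j\cap\ov A-Y^j\subseteq W_j-(Y^j\cup A)$, the set $W_j\cap\ov A\cap Y^j$ must be infinite.  Writing $Y^j\cap\ov A=\bigcup_{h\leq j}(Y_h\cap\ov A)$ and invoking Lemma~\ref{subseteqstar} for each $h\leq j$ yields $Y^j\cap\ov A\subseteq^* Z^j$, and hence $W_j\cap Z^j$ is infinite.

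To conclude that $\ov A$ is semilow$_{1.5}$, I will observe that both the set of expansionary stages of $W_j-(Y^j\cup A)$ and the set $W_j\cap Z^j$ are c.e.\ uniformly in $j$: the former because being expansionary is a decidable property of the current finite approximation, and the latter because an element joins $Z^j$ exactly at the single stage at which Step~1 or Step~4 declares it inactive (the only deactivations that do not enumerate the witness into $A$).  Therefore one can computably produce an index $f(j)$ for the disjoint union of these two c.e.\ sets, and by the biconditional $W_{f(j)}$ is infinite iff $W_j\cap\ov A$ is infinite.  This gives $\{j:W_j\cap\ov A\text{ is infinite}\}\leq_m\operatorname{Inf}$, which upgrades to $\leq_1$ by standard padding, so by complementing $\{j:W_j\cap\ov A\text{ is finite}\}\leq_1\mathbf{\emptyset''}$, i.e.\ $\ov A$ is semilow$_{1.5}$.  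The only mildly delicate point, which I regard as the main item to verify, is the c.e.\ nature of $Z^j$; once one pins this down to a concrete stage-by-stage witness (the Step 1 or Step 4 deactivation event), the rest of the argument is routine.
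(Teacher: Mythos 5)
Your proof is correct and follows essentially the same approach as the paper's: establish both directions of the biconditional using $Z^j\subseteq\ov A$, Lemma~\ref{semilowonefive}, and Lemma~\ref{subseteqstar}, and then reduce $\{j: W_j\cap\ov A\text{ infinite}\}$ to $\operatorname{Inf}$ via the c.e.\ union of $W_j\cap Z^j$ with the set of expansionary stages. Your remark pinning down exactly which steps of the construction put elements into $Z^j$ (and noting once-inactive witnesses never later enter $A$) is a useful extra verification that the paper leaves implicit.
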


\begin{proof}
  This Lemma gives us that $\ov A$ is semilow$_{1.5}$ because to tell
  if $W_j\cap \ov A$ is infinte, we can ask Inf if the \ce set that is
  the union of $W_j\cap Z^j$ and the set of expansionary stages of
  $W_j-(Y^j\cup A)$ is infinite.  This gives us an $m$-reduction from
  $W_j\cap \ov A$ to Inf.

  First, note that $Z^j\subseteq \ov A$, so if $W_j\cap Z^j$ is
  infinite, so is $W_j\cap \ov A$.  By Lemma \ref{semilowonefive}, we
  know that if the set of expansionary stages of $W_j-(Y^j\cup A)$ is
  infinite, then $W_j\cap \ov A$ is infinite.

  Assume $W_j\cap \ov A$ is infinite.  Suppose $W_j-(Y^j\cup A)$ has
  finitely many expansionary stages.  Then $W_j-(Y^j\cup A)$ is
  finite.  Thus, $W_j\cap \ov A\subseteq^* W_j\cap Y^j$.  By Lemma
  \ref{subseteqstar}, $Y_h\cap \ov A\subseteq^* Z_h$, so
  $W_j\cap \ov A\subseteq^* W_j\cap Z^j$.  Thus $W_j\cap Z^j$ is
  infinite.
\end{proof}

\begin{lemma} $A\leT D$.
\end{lemma}

\begin{proof} To determine if $k\in D$, ask if any elements of the set
  $F_k$ are in $A$.  The only way any of the elements can enter $A$ is
  if $k\in D$.  It is always possible to enumerate one of the elements
  into $A$ because we must keep out at most
  $\displaystyle\sum_{M=1}^k M=k(k+1)/2$ elements, and
  $\frac{(k+1)k}2=\frac{k^2}2+\frac k2<2k^2$, which is the size of
  $F_k$ for $k>0$.  For $k=0$, we do not need to keep out any
  elements, and $F_0$ is nonempty.
\end{proof}

\begin{lemma} $D\leT A$.
\end{lemma}

\begin{proof}
  $\omega^{[0]}\cap A\leT D$ since no element from $\omega^{[0]}$
  enters $A$ unless it is in $F_k$ and $k$ enters $D$.

  For $\omega-\omega^{[0]}$, $x\in A$ if and only if $x$ becomes a
  witness and then enters $A$.  The set of witnesses is computable
  because if $x$ is not a witness by stage $x$, it will never become a
  witness. If $x$ is a witness, we ask what its use $u$ is as a
  witness.  Let $s_x$ be such that $D\res u=D_{s_x}\res u$.  Then
  $x\in A$ if and only if $x\in A_{s_x}$.
\end{proof}

\end{proof}


\section{Semilow$_{2}$, O.S.P, not semilow$_{1.5}$ in all nonlow
  degrees}

\label{semilow2OSP}

As mentioned previously, a set $B$ is {\em semilow$_2$} if $\{e\ | \ W_e\cap B$ is
infinite$\}\leT \0''$.  It follows immediately that if $B$ is
semilow$_{1.5}$, then $B$ is semilow$_2$.

A set $A$ has the {\em outer splitting property} if there exist total
computable functions $f$ and $g$ such that, for each $i\in \omega$,
\begin{itemize}
\item[(a)] $W_i=W_{f(i)}\sqcup W_{g(i)}$,
\item[(b)] $W_{f(i)}\cap \ov A$ is finite, and
\item[(c)] if $W_i\cap \ov A$ is infinite, then $W_{f(i)}\cap \ov A$
  is nonempty.
\end{itemize}

Maass \cite{Maass:83} showed that if a set $A$ has semilow$_{1.5}$ complement,
then $A$ has the outer splitting property.  Thus, the class of \ce sets with semilow$_{1.5}$ complement is contained in the intersection of
the sets with semilow$_2$ complement and the sets with the outer splitting property.  We show that in every nonlow \ce degree, this containment is strict.

\begin{theorem}\label{TwoOSPnotOneFive} For every nonlow \ce degree
  $\bf d$, there is a \ce set $A\in \bf d$ such that $A$ has the outer
  splitting property and semilow$_{2}$ complement, but does not have
  semilow$_{1.5}$ complement.
\end{theorem}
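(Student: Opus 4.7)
The plan is to adapt the construction of Theorem \ref{onefivenotsemilow} in three coupled ways: relax the lowness-type bookkeeping from semilow$_{1.5}$ to semilow$_2$, strengthen the diagonalization from ``not semilow'' to ``not semilow$_{1.5}$,'' and impose the outer splitting property throughout. The coding of $D$ into $A$ via the sets $F_k$, and the Downey--Jockusch--Schupp nonlow-permitting module, would be inherited essentially unchanged from Theorem \ref{onefivenotsemilow}.

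For the outer splitting property, I would uniformly define $W_{f(i)}$ to consist of every $x$ that is, at some stage $s$, the least element of $W_{i,s}\setminus A_s$, and set $W_{g(i)} = W_i \setminus W_{f(i)}$. If $W_i\cap\ov A$ is infinite, then its minimum $x^*$ is eventually and permanently the least element of $W_{i,s}\setminus A_s$, so $x^*\in W_{f(i)}\cap\ov A$; moreover $W_{f(i)}\cap\ov A\subseteq\{y : y\leq x^*\}$ is finite. In the finite case $W_{f(i)}\cap\ov A\subseteq W_i\cap\ov A$ is also finite. Uniformity in $i$ yields computable $f$ and $g$.

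For the NS$_h$ requirement (one for each partial computable $h$ of one variable), the goal is to produce indices $r = r(\alpha,e,n)$, known in advance by the Recursion Theorem with Parameters, such that $W_r\cap\ov A$ is infinite iff $h(r)\in\emptyset''$. Since $h(r)\in\emptyset''$ is $\Sigma^0_2$ I would approximate it by $\Phi_{h(r)}^{\emptyset'_s}(h(r))[s]\converges$, and use a tree of strategies whose nodes guess this $\Sigma^0_2$ predicate together with the $\Pi^0_2$ outcomes that occur naturally during the construction. When the current guess at a node is ``yes,'' NS$_h$ appoints a fresh large witness meant to stay in $\ov A$; when it flips to ``no,'' that witness is enumerated into $A$ via $D$-permission. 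As in Theorem \ref{onefivenotsemilow}, to absorb the infinitely many possible flips that can occur when $h(r)\in\emptyset''$ truly holds, I would build a family of candidate sets $S_{h,e}^\alpha = W_{r(\alpha,e,n)}$ and show that at least one of them stabilizes and witnesses the diagonalization. Because the true path is $\emptyset''$-computable, the predicate ``$W_j\cap\ov A$ is infinite'' is also $\emptyset''$-computable, so $\ov A$ is semilow$_2$.

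The main obstacle will be reconciling the positive action of NS$_h$ with OSP: every witness pushed into $W_r\cap\ov A$ is also enumerated into whichever external $W_i$ it happens to join, and it must not corrupt the OSP candidate bookkeeping for any such $W_i$. I would address this by always taking witnesses fresh and large, strictly above every currently-relevant OSP candidate and above the use of the finitely many jump computations the permitting module cares about, and by arranging the OSP update rule so that it never reclassifies an element that is still under NS$_h$'s control. A secondary technical point is that, unlike in Theorem \ref{onefivenotsemilow}, the tree must now guess a genuine $\Sigma^0_2$ predicate rather than a merely $\Pi^0_2$ one, so the true path is computed from $\emptyset''$ instead of $\emptyset'$; meshing this coarser guessing with the nonlow-permitting (which still only delivers $\emptyset'$-level information through $K^D$) is what will make the verification lemmas correspondingly more delicate than their counterparts in Section~\ref{semilow15}.
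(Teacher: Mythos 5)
Your outline has the right global shape (coding $D$ via the $F_k$, nonlow permitting via $\Phi_e^D(e)$, a tree of strategies with $\emptyset''$-computable true path, the usual $S_{h,e}^\alpha$ witness sets), but the outer-splitting module is where the proposal breaks down, and that is not a cosmetic issue.

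You define $W_{f(i)}$ to be the set of all $x$ that are at some stage the least element of $W_{i,s}\setminus A_s$, and $W_{g(i)}=W_i\setminus W_{f(i)}$. The first set is c.e., but the second is not: to know that $x$ belongs to $W_{g(i)}$ you must certify that $x$ is \emph{never} the least element of $W_{i,s}\setminus A_s$, and that can fail at any later stage when smaller elements of $W_{i,s}\setminus A_s$ enter $A$. The outer splitting property requires a computable split $W_i=W_{f(i)}\sqcup W_{g(i)}$ with both halves c.e., so this definition does not satisfy even clause (a). The obvious repairs fail: enumerating $x$ into $W_{g(i)}$ as soon as it enters $W_i\cap A$ leaves a gap (elements of $W_i\cap\ov A$ that are never the least end up in neither half), and enumerating $x$ into $W_{g(i)}$ eagerly when it is not currently least can strand $x^*=\min(W_i\cap\ov A)$ in $W_{g(i)}$ before the smaller spurious elements clear out, killing clause (c). (A smaller slip: $W_{f(i)}\cap\ov A$ need not be $\subseteq\{y:y\le x^*\}$ since elements larger than $x^*$ may be appointed before $x^*$ ever enters $W_i$; finiteness still holds, but for the reason that only boundedly many elements can be the running minimum after $x^*$ appears.)

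What the ``lazy'' OSP definition is missing is precisely the interaction with the rest of the construction. The paper's OSP step makes an irrevocable classification at the moment $x$ enters $W_i$, which is what makes both halves c.e., and it can afford to do this because the split rule is woven into the witness machinery: putting a ``virgin'' element into $W_{f(i)}$ triggers a reset of the lower-priority $P_{h,e}^\alpha$ so that the element cannot later be dumped into $A$ as a witness; inactive witnesses are known to stay out of $A$ forever; and active witnesses for the same $P_{h,e}^\alpha$ share a use with the $D$-permission, so if one enters $A$ they all do. Your proposal defers to ``arranging the OSP update rule so that it never reclassifies an element still under NS$_h$'s control,'' but the problem runs the other way: it is the OSP module that must preempt and constrain the positive requirements, not merely avoid being overwritten by them. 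Without that hook, you have no c.e. $W_{g(i)}$ and no way to guarantee clause (c) inside the construction.

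Two smaller points. First, the claim that the tree in Section~\ref{semilow15} has a $\emptyset'$-computable true path while here it is $\emptyset''$ is not the right dichotomy: in both theorems the tree guesses $\Pi^0_2$ outcomes (expansionary stages of $W_j-(Y^j\cup A)$) and the true path is $\emptyset''$-computable; what changes is that the semilow$_{1.5}$ verification is an $m$-reduction to $Inf$ not referencing the true path, whereas the semilow$_2$ verification genuinely consults the true path. Second, asserting ``the true path is $\emptyset''$-computable, hence $W_j\cap\ov A$ infinite is $\emptyset''$-computable'' skips the actual content of the paper's Lemma~\ref{zerodouble}, which has to analyze which witnesses persist in $\ov A$ node by node (left of, on, or right of the true path) to produce a $\emptyset''$-computable index $m(j)$ with $W_{m(j)}=^*Y^j\cap\ov A$.
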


\begin{Cor} The nonlow \ce degrees are precisely the degrees of \ce sets that have the outer splitting property and semilow$_2$ complement but not semilow$_{1.5}$ complement.
\end{Cor}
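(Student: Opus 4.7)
The plan is to adapt the proof of Theorem~\ref{onefivenotsemilow} in three ways: the P-requirements are recast to defeat $1$-reductions into $\mathrm{Fin}$ rather than limit computations; a splitting module is added to produce the OSP; and the semilow$_2$ verification replaces the semilow$_{1.5}$ verification. Given a nonlow \ce set $D$, we build $A \eT D$ on a binary tree of strategies, reusing the coding module from Theorem~\ref{onefivenotsemilow} (disjoint finite blocks $F_k$ with $k \in D$ iff some element of $F_k$ enters $A$) to get $D \leT A$, and a nonlow permitting module with auxiliary functions $\ell_\alpha$ to get $A \leT D$. For each total computable $h$, each node $\alpha$ on the tree, and each $e$, we build $S_{h,e}^\alpha = W_{r(\alpha,e,n)}$ via the Recursion Theorem with Parameters, where $n$ counts resets. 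The strategy appoints a sequence of fresh witnesses into $S_{h,e}^\alpha$; each time $W_{h(r(\alpha,e,n))}$ grows and $D$ permits, the least witness not yet in $A$ is enumerated into $A$. If $W_{h(r(\alpha,e,n))}$ is infinite, every witness is eventually absorbed, so $S_{h,e}^\alpha \cap \ov A$ is finite; if $W_{h(r(\alpha,e,n))}$ is finite, only finitely many witnesses enter $A$ and $S_{h,e}^\alpha \cap \ov A$ is infinite. Either outcome contradicts $h$ being a $1$-reduction from $\{e : W_e \cap \ov A\text{ finite}\}$ to $\mathrm{Fin}$ at index $r(\alpha,e,n)$.

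For the OSP, a splitting module defines computable $f$ and $g$ with $W_i = W_{f(i)} \sqcup W_{g(i)}$ by maintaining a single candidate in $W_{f(i)}$ at each stage: new elements of $W_i$ default to $W_{g(i)}$, and only when the current candidate enters $A$ does the next arriving element of $W_i$ become the new candidate. This keeps $W_{f(i)} \cap \ov A$ to at most one element at any stage, hence finite in the limit. The tree outcomes, which guess a suitable $\Pi^0_2$ condition (analogous to the expansionary-stages guess in Theorem~\ref{onefivenotsemilow}) forcing $W_i \cap \ov A$ to be infinite, allow the node on the true path to protect its candidate from absorption by lower-priority modules, making $W_{f(i)} \cap \ov A$ nonempty whenever $W_i \cap \ov A$ is infinite. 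For semilow$_2$, the approximation to the true path is $\Delta^0_2$ and the true path itself is $\Delta^0_3$; an analogue of Lemma~\ref{semiparttwo}, now using $\emptyset'$-c.e.\ auxiliary objects in place of c.e.\ ones, characterizes ``$W_j \cap \ov A$ infinite'' in terms of true-path data, giving the required $\leT \emptyset''$ reduction.

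The main obstacle will be coordinating the P-module, which aggressively dumps witnesses into $A$ whenever $W_{h(r(\alpha,e,n))}$ grows, with the OSP module, which must preserve at least one candidate outside $A$ for each $W_i$ with $W_i \cap \ov A$ infinite, and doing so without accidentally making the complement semilow$_{1.5}$. A careful priority order on the tree, together with the nonlow permitting mechanism keeping $A \leT D$, should resolve these conflicts; the delicate point is that the surviving P-witnesses needed to defeat every candidate $1$-reduction must not, together with the OSP candidates, themselves yield a $1$-reduction from $\{e : W_e \cap \ov A\text{ finite}\}$ to $\mathrm{Fin}$.
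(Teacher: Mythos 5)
The corollary's proof in the paper is a one-liner: the forward inclusion (every nonlow degree contains such a set) is exactly Theorem~\ref{TwoOSPnotOneFive}, and the reverse inclusion holds because lowness is degree-invariant and every low c.e.\ set has semilow, hence semilow$_{1.5}$, complement. Your proposal instead sketches a construction proving the forward inclusion from scratch---essentially re-deriving the theorem rather than invoking it. That is far more than the corollary needs, but, more to the point, you never address the reverse inclusion: you must also argue that no low c.e.\ degree contains a set whose complement fails to be semilow$_{1.5}$, which is the content the paper's one-line proof is actually supplying.

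Turning to your construction sketch (really an attempt at the theorem): the OSP module is too weak. Your single-candidate scheme---keeping at most one element of $W_{f(i)}$ in $\ov{A}$ at a time and picking a replacement only when the candidate enters $A$---fails in the case where every candidate you happen to choose is a P-witness later dumped into $A$, while the elements of $W_i$ that stay out of $A$ forever arrive at stages when you already hold a candidate and so default into $W_{g(i)}$. You gesture at tree outcomes ``protecting'' the candidate, but an OSP requirement cannot shield its candidate from higher-priority P-requirements that need to dump it, and ``$W_i \cap \ov{A}$ infinite'' is $\Pi^0_3$, not a $\Pi^0_2$ tree outcome of the sort used in Theorem~\ref{onefivenotsemilow}. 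The paper's Step~6 is substantially more elaborate precisely to handle this: it maintains up to $i+1$ non-witness candidates whose entry into $W_{f(i)}$ \emph{resets} all lower-priority P-requirements (so one such candidate is guaranteed to survive), falls back to an inactive witness when available (inactive witnesses never enter $A$), and finally to active witnesses whose $D$-uses are tied to other witnesses in the same $S^\alpha_{h,e}$, so that absorption of one implies absorption of the whole block, contradicting the assumption that $W_i \cap \ov{A}$ is infinite. Without something of this kind, part~(c) of the outer splitting property can fail.
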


The corollary follows immediately from the theorem, since every low \ce set has semilow$_{1.5}$ complement.

\begin{proof}

  Given a nonlow \ce set $D$, we will construct an $A\equivT D$ such
  that $\ov A$ is semilow$_{2}$ but not semilow$_{1.5}$ and such that
  $A$ satisfies the outer splitting property.

  \subsection{Requirements:}

  To ensure that $D\leT A$, we will code $D$ into $A$.  To begin, we
  construct a computable list of disjoint finite sets
  $F_k\subset \omega^{[0]}$, such that $F_0$ contains two elements and
  each other $F_k$ has $6k^2$ elements.

  C$_k$: $k\in D$ if and only if some element of $F_k$ is in $A$.

  To make $\ov A$ not semilow$_{1.5}$, we meet the following
  requirement for each total computable function $h$:

  P$_h$: $\{e \ | \ W_e \cap \ov A$
  infinite$\} \neq \{e \mid W_{h(e)} $ infinite$\}$.

  Let $\{\phi_i\}_{i\in\omega}$ be a listing of all partial computable
  functions on two inputs.  Then we must meet P$_h$ for all
  $h=\phi_i$.  As in Theorem \ref{onefivenotsemilow}, we find it
  easier to refer to $h$ instead of $i$ in this situation and when we
  say $h<j$, we mean that $i<j$.



  We will try to meet P$_h$ for each $\alpha$ of length $i$ for
  $h=\phi_i$ on our tree of strategies and each $e\in\omega$, so we
  will label our requirements P$_{h,e}^\alpha$.  To meet P$_h^\alpha$,
  for each $e$, if P$_{h,e}^\alpha$ has been reset $n$ times, then we
  build $S_{h,e}^\alpha=W_{r(\alpha, e,
    n)}$, 
  where $r$is  known in advance by the Recursion Theorem with
  Parameters.  We add elements, called witnesses, to $S_{h,e}^\alpha$.
  For some $e$, we will guarantee that if $W_{h(r(\alpha,e,n))}$ is
  infinite, then we dump the witnesses into $A$, so $S_{h,e}^\alpha\cap \ov A$
  is finite, and if $W_{h(r(\alpha,e,n))}$ is finite, $S_{h,e}^\alpha$
  gets infinitely many witnesses that never enter $A$.  Thus,
  $r(\alpha,e,n)$ will satisfy that $W_{r(\alpha,e,n)}\cap \ov A$ is
  infinite if and only if $W_{h(r(\alpha,e,n))}$ is
  finite. 

  Define

  $Y_{h,e}^\alpha=\{x\ | \ x$ is ever a witness for
  P$_{h,e}^\alpha\},$

  $Y_h=\{x \ | \ x$ is in $Y_{h,e}^\alpha$ for some $e$ and
  $\alpha\}$, and

  $Y^j=\displaystyle \bigcup_{h\leq j} Y_h$.
  
  Thus, $Y^j$ is the set of all $x$ that are ever witnesses for any $P_{h,e}^\al$ for any $h\leq j$.  

  To meet the outer splitting property, we build functions $f$ and $g$
  so that for each $i$, we satisfy requirement O$_i$:
  \begin{itemize}
  \item[(a)] $W_i=W_{f(i)}\sqcup W_{g(i)}$,
  \item[(b)] $W_{f(i)}\cap \ov A$ is finite, and
  \item[(c)] if $W_i\cap \ov A$ is infinite, then $W_{f(i)}\cap \ov A$
    is nonempty.
  \end{itemize}

  For $A$ to have semilow$_{2}$ complement, we need to ensure $\{ j\ | \ W_j\cap \ov A $ infinite$\}\leT \bf 0''$.

  To achieve this, we will meet for each $j\in\omega$:

  L$_j$: $W_j\cap \ov A $ is infinite if and only if either
  $W_j-(Y^j\cup A)$ has infinitely many expansionary stages or
  $W_j\cap Y^j\cap \ov A$ is infinite.

  We will also show that $\bf 0''$ is able to determine if
  $W_j\cap Y^j\cap \ov A$ is infinite.

  We use a binary tree of strategies.  Let $\alpha$ be a length $i$
  string on the tree.  For each $j<i$, $\alpha(j)=0$ indicates a guess
  that $W_j-(Y^j\cup A)$ has infinitely many expansionary stages and
  $\alpha(j)=1$ is a guess that it has finitely many.  P$_h^\alpha$
  will work to satisfy P$_h$ while assuming that $\alpha$ is correctly
  guessing the true outcome.

  The tree serves two purposes.  As in Theorem
  \ref{onefivenotsemilow}, it allows us to ensure that if
  $W_j-(Y^j\cup A)$ has infinitely many expansionary stages, it is in
  fact an infinite set.  The nodes guessing that there are infinitely
  many expansionary stages are not allowed to add witnesses when L$_j$
  is injured.  Instead, they wait until a new expansionary stage is
  reached.  This way, they do not create witnesses that may later
  enter $W_j-(Y^j\cup A)$, only to be forced into $A$.  In addition,
  nodes guessing that $W_j-(Y^j\cup A)$ has finitely many expansionary
  stages are reset each time $W_j-(Y^j\cup A)$ reaches a new
  expansionary stage, meaning that any of their current witnesses will
  never enter $A$.  The combination of these two actions guarantees
  that if $W_j-(Y^j\cup A)$ has infinitely many expansionary stages,
  it has infinitely many elements that never enter $A$ and so it is
  infinite.

  The other purpose of the tree is to allow $\0''$ to compute whether
  $W_j\cap Y^j\cap \ov A$ is infinite.  In Lemma \ref{zerodouble}, we
  analyze which elements of $Y^j$ stay in $\ov A$ forever by examining
  what happens to witnesses for P$_h^\alpha$ when $\alpha$ has various
  relationships to the true path.  Since $\0''$ can determine the true
  path, we can use $\0''$ to find the index for a \ce set that is
  finitely different from $Y^j\cap \ov A$.  Thus $\0''$ can determine
  whether $W_j\cap Y^j\cap \ov A$ is infinite.

  In addition, we need that $A\leT D$, which we achieve by a nonlow
  permitting argument, as in Theorem \ref{onefivenotsemilow}.

  \subsection{Construction}

  We build an approximation $\delta_s$ to the true path.  We call $s$
  an $\alpha$-stage if $\alpha$ is a substring of $\delta_s$.  Define
  $\delta_0=\lambda$, the empty string.  Let $\delta_s$ be the length
  $s$ string defined by: $\delta_s(j)=0$ if $W_j-(Y^j\cup A)$ has
  reached a new expansionary stage since the last
  $\delta_s\res j$-stage, and $1$ otherwise.

  Let $\ell_\alpha(e,0)=0$ and $\ell_\alpha(e,s+1)=\ell_\alpha(e,s)$
  unless otherwise specified.

  We begin with all L$_j$ allowing addition of new witnesses for every
  P$_h^\alpha$.  In Step 5, if L$_j$ is injured, then it will disallow
  new witnesses for P$_h^\alpha$, where $\alpha$ guesses the infinite
  outcome for L$_j$.  L$_j$ will again allow new witnesses if
  $W_j-(Y^j\cup A)$ reaches a new expansionary stage (Step 1).  Step 2
  also has the ability to make L$_j$ allow again, and serves the
  purpose of ensuring that if $W_j-(Y^j\cup A)$ has infinitely many
  expansionary stages, it allows each P$_h^\alpha$ to add witnesses
  infinitely often.

  Nodes on the tree may be put in 1-1 correspondence with
  $\mathbb N-\{0\}$ via a function $f:2^{\in\omega}\ra \mathbb N-\{0\}$
  .  Using such a correspondence, let
  $\langle \alpha, e\rangle :=\langle f(\alpha), e\rangle$.  In Step
  3, witnesses come from $\omega^{[\langle \alpha, e\rangle]}$.
  
  Note that in the following, Steps 1 and 2 are identical to those in Theorem \ref{onefivenotsemilow}.

  {\em Stage $s+1$}:

%

{\bf Step 1} (new expansionary stage): For each $j\in \omega$, if
  $W_j-(Y^j\cup A)$ grows to a size we haven't seen before, i.e. it
  reaches a new expansionary stage, then do the following for each
  $\alpha$ of length greater than $j$: If $\alpha$ guesses that
  $W_j-(Y^j\cup A)$ has infinitely many expansionary stages
  (i.e. $\alpha(j)=0$), then L$_j$ now allows addition of new
  witnesses for P$_h^\alpha$.  If $\alpha$ guesses that
  $W_j-(Y^j\cup A)$ has finitely many expansionary stages
  ($\alpha(j)=1$), we reset P$_{h,e}^\alpha$ for all $e$, making
  $\ell_\alpha(e,s)=0$ and declaring P$_{h,e}^\alpha$ and any
  witnesses for P$^\alpha_{h,e}$ inactive.  When reset, we will start
  a new $S_{h,e}^\alpha$ that begins empty.

  {\bf Step 2} (allowing some disallowed witnesses): We say a node of
  length $j$ {\em is allowed} new witnesses if no $j'<j$ disallows
  P$_h^\alpha$ from adding witnesses.  For each $j$, $N$, and $\beta$,
  where $\beta$ has length $j$, if L$_j$ currently disallows for $N$
  (as defined in Step 5) and if $\beta$ is allowed new witnesses, then
  set L$_j$ to allow new witnesses for all $\alpha$ unless Step 2 has
  acted at some previous stage for these particular $j$, $N$, and
  $\beta$.
  Do this for all possible triples, of which there are only finitely
  many.  This step guarantees that nodes on the true path will be
  allowed to act in Step 3 infinitely often.

  {\bf Step 3} (adding witnesses): We act for each $\alpha$ of length
  at most $s$ such that P$_h^\alpha$ is allowed to add witnesses by
  each L$_j$ with $j<s$ such that $\alpha(j)=0$, 
  and such that either $s$ is an $\alpha$-stage or $\alpha$ is to the
  left of $\delta_s$ and if $s_\alpha$ was the last $\alpha$-stage,
  P$_h^\alpha$ has not been allowed to act at any stage $t$,
  $s_\alpha\leq t<s$.  In other words, we act on the approximation to
  the true path $\delta_s$ and for any nodes to the left of $\delta_s$
  that have not been allowed to act since before they were last on the
  approximation to the true path.  We act in length-lexicographic
  order. For each such $\alpha$, we ask if there is any $e<s$ such
  that

  \begin{itemize}
  \item $\ell_\alpha(e,s)=0$,
  \item $\Phi_e^D(e)[s]\converges$, and
  \item for all $e'< e$,
    $\Phi_{e'}^D(e')[s]\converges\iff \ell_\alpha(e',s)=1$ or
    $\Phi_{e'}^D(e')[s]\converges$ and has changed computations at
    least $e$ times since the last time that P$_{h,e}^\alpha$ added a
    witness.  
  \end{itemize}

  If so, then for each such $e$, let $x>s$,
  $x\in \omega^{\langle \alpha, e\rangle}$, be a fresh witness, larger
  than any previously chosen witness.  Enumerate
  $x\in S_{h,e}^\alpha$.  Declare $x$ to have use $\phi_e^D(e)[s]$,
  which is defined to be the use of $\Phi_e^D(e)[s]$.  We call the
  witness $x$ {\em active}.

  {\bf Step 4} (changing $\ell_\alpha$ when $W_{h(r(\alpha,e,n)}$
  grows): Suppose 
  $\ell_\alpha(e,s)=0$, $S_{h,e}^\alpha\cap \ov A$ contains at least
  one witness, and $|W_{h(r(\alpha,e,n),s+1}|>|W_{h(r(\alpha,e,n),s}|$
  where $n$ is the number of times P$_{h,e}^\alpha$ has been reset.
  Then set $\ell_\alpha(e,s+1)=1$ and reset P$_{h,e'}^\alpha$ for all
  $e'>e$
  by 
  starting new empty sets $S_{h,e'}^\alpha$, as well as setting
  $\ell_\alpha(e',s+1)=0$ and declaring any witnesses for
  P$_{h,e'}^\alpha$ inactive.  Perform this step for each $\alpha$ and
  for the least $e$ such that it applies.

  {\bf Step 5} (enumerating witnesses into A and disallowing new
  witnesses): If $x\in S_{h,e}^\alpha$ is active with use $u$ and
  $D_{s+1}\res u\neq D_{s}\res u$, then put $x$ into $A_{s+1}$ and set
  $\ell_\alpha(e,s+1)=0$.  

  If by such an enumeration, we cause an element of
  $(W_j-(Y^j\cup A))[s]$ to enter $A$, then we say L$_j$ does not {\em
    allow} P$_h^\alpha$ to add witnesses for any $\alpha$ such that
  $\alpha(j)=0$.  In addition, if the size of $(W_j-(Y^j\cup A))$ has
  become $N$, we say that L$_j$ {\em disallows for
    $N$}.


%
%

  {\bf Step 6} (outer splitting property): Let $X_i=\{x\ | \ x$ is
  ever a witness for any P$_{h,e}^\alpha$ with
  $\langle h,e\rangle<i\}$.  For each $x\in W_{i,s+1}$ such that $x$
  is not yet in $W_{f(i)}$ or $W_{g(i)}$, do the first of the
  following that applies:

  \begin{itemize}

  \item[(a)] If $x\in W_i-(X_i\cup A)$ and
    $|W_{f(i)}-(X_i\cup A)|<i+1$, put $x$ in $W_{f(i)}$.  Reset all
    P$_{h,e}^\alpha$ for $\langle h,e\>\geq i$.
  \item[(b)] If there is an $\langle h, e\rangle<i$ and any $\alpha$
    such that $x$ is an inactive witness for P$_{h,e}^\alpha$, and
    $W_{f(i)}$ does not already contain an inactive witness, then put
    $x$ into $W_{f(i)}$.
  \item [(c)] If $\Wfi$ contains no inactive witness and there is an
    $\langle h, e\rangle<i$ such that $x\in W_i\cap S_{h,e}^\alpha$
    and $W_{f(i)}\cap S_{h,e}^\alpha$ is empty, then put $x$ into
    $W_{f(i)}$.
  \item [(d)] If none of the above applies, put $x$ into $W_{g(i)}$.
  \end{itemize}

  {\bf Step 7} (coding D): If $k\in D_{s+1}-D_s$, enumerate one
  element from $F_k$ into $A_{s+1}$ such that it obeys the following:
  for each $M>0$ and $j+M=k$, the element is not one of the
  least 
  $M$ elements in $(W_j-(Y^j\cup A))[s]$, and for each $i\leq k$, the
  element is not one of the least $i+1$ elements in
  $W_{f(i)}-(X_i\cup A)$.

  \subsection{Verification}

  \begin{lemma}\label{semilowtwoprep} For each $j\in\omega$,
    $W_j\cap \ov A$ is infinite if and only if either
    $W_j\cap \ov A\cap Y^j$ is infinite or $W_j-(Y^j\cup A)$ has
    infinitely many expansionary stages.
  \end{lemma}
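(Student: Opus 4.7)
My plan is to reduce this lemma to the single claim that $W_j - (Y^j \cup A)$ is infinite (in the limit) if and only if it has infinitely many expansionary stages. To this end I would first observe the set-theoretic decomposition
$$W_j\cap\ov A \;=\; (W_j\cap\ov A\cap Y^j)\;\sqcup\;(W_j - (Y^j\cup A)),$$
using the definition of $Y^j$ and the fact that $Y^j\subseteq \omega$ is disjoint from the latter piece. Consequently $W_j\cap\ov A$ is infinite iff at least one of these two pieces is infinite, so the lemma is equivalent to saying that the second piece is infinite iff it has infinitely many expansionary stages.

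The forward direction is routine: if $W_j-(Y^j\cup A)$ is infinite in the limit, then witnesses never migrate back out of $Y^j$ and elements never leave $\ov A$ once enumerated, so given any $M$ there is some stage $s_M$ after which $|(W_j-(Y^j\cup A))[s]|\ge M$ holds at every $\alpha$-stage where the $M$ limit elements have appeared in $W_j$. This gives infinitely many expansionary stages.

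For the converse I would mimic the induction on $M$ from Lemma \ref{semilowonefive}, adapted to the Step~1--Step~7 construction here. Assume infinitely many expansionary stages; I would show by induction that $|W_j-(Y^j\cup A)|\ge M$. The induction step picks a stage $s_0$ by which the least $M-1$ witnesses of the final set are fixed and no $C_k$ or outer-splitting-property bookkeeping (Step~7 and Step~6(a)) will dump any of the least $M$ elements of the current approximation into $A$, using that Step~7 explicitly protects the least $M$ elements whenever $j+M=k$ and that Step~6(a) only enumerates $i+1$ many elements into $W_{f(i)}$ for each $i\le\langle h,e\rangle$. Next pick a stage $s_1>s_0$ after which Step~2 never acts again for the triple $(j, M-1, \beta)$ for any $\beta$ of length $j$. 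If the $M$th least element $x_M$ in $(W_j-(Y^j\cup A))[s]$ never enters $A$ we are done; otherwise when it does enter $A$, $L_j$ ceases to allow any $\alpha$ with $\alpha(j)=0$ to add witnesses, and Step~2 cannot re-enable it for this $N$. The next time $L_j$ allows must be via Step~1 at a new expansionary stage, at which point Step~1 deactivates all witnesses for $\alpha$ with $\alpha(j)=1$. Thus the $M$th element of the set at that expansionary stage is pinned and never enters $A$.

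The main obstacle is checking that no side requirement---the coding $C_k$, the outer splitting bookkeeping of Step~6, or the permitting-injured witnesses of Step~5---can spoil the protection of the $M$th element across the window between one expansionary stage and the next. This is why Step~7 explicitly avoids the least $M$ elements of $(W_j-(Y^j\cup A))[s]$ whenever $j+M=k$ and Step~6(a) is capped at $i+1$ elements; once this bookkeeping is tallied, only finitely many $k$ or $i$ can interfere with a fixed $M$, and the tree structure (as in the proof of Lemma \ref{semilowonefive}) handles the rest. Everything else in the argument---the $Y^j$-versus-$\ov A$ decomposition and the forward direction---is purely combinatorial.
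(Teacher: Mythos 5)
Your proposal follows the same route as the paper: decompose $W_j\cap\ov A$ as the disjoint union of $W_j\cap\ov A\cap Y^j$ and $W_j-(Y^j\cup A)$, reduce to the claim that $W_j-(Y^j\cup A)$ is infinite iff it has infinitely many expansionary stages, and prove the nontrivial direction by the same induction on $M$ as in Lemma~\ref{semilowonefive}, using $s_0,s_1,s_2$ and the $L_j$/Step~1/Step~2 bookkeeping. The one adaptation you leave implicit is the choice of $x_M$: in this construction a witness's use is $\phi_e^D(e)[s]$ rather than its appointment stage, so the paper selects the future occupant of the $M$th position having \emph{least use} (not least value), which is exactly what guarantees that when Step~5 dumps $x_M$ into $A$ it simultaneously dumps all witnesses of equal or larger use, and hence the next $M$th element has not yet become a witness.
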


\begin{proof}

Note that the proof of this lemma is a very slight variation on the proof of Lemma \ref{semilowonefive}.

  If $W_j\cap \ov A$ is infinite then either $W_j\cap \ov A\cap Y^j$
  is infinite or $W_j-(Y^j\cup A)$ is infinite, in which case it has
  infinitely many expansionary stages.

  If $W_j\cap \ov A\cap Y^j$ is infinite, then $W_j\cap \ov A$ is
  infinite.  Suppose $W_j-(Y^j\cup A)$ has infinitely many
  expansionary stages.  We will show that for each $M$,
  $W_j-(Y^j\cup A)$ has at least $M$ elements.

  Induct on $M$.  Assume true for
  $M-1$.  

  No element in $(W_j-(Y^j\cup A))[s]$ ever enters $Y^j$ since new
  witnesses are chosen to be larger than $s$, and elements of
  $(W_j-(Y^j\cup A))[s]$ are smaller than $s$.%

  For $k\leq
    j$, only finitely many elements are ever put into $A$ by
    C$_k$.
    For $k>j$,
    C$_k$
    can only bring the size of $W_j-(Y^j\cup
    A)$ below $M$ if $M+j>k$, which happens finitely often.  Let
    $s_0$
    be a stage by which the least $M-1$
    elements in $(W_j-(Y^j\cup
    A))[s_0]$ never enter $A$ and after which no
    C$_k$
    enumerates any of the least $M$
    elements of $(W_j-(Y^j\cup
    A))[s]$ into $A$.  Let
    $s_1>s_0$
    be a stage after which Step 2 never acts for ($j,$
    $M-1$,
    $\beta$)
    for any $\beta$
    of length $j$.
    We may assume that at every stage $s>s_1$,
    the $M$th
    least element of $(W_j-(Y^j\cup
    A))[s]$ is a witness for some
    P$_{h,e}^\alpha$,
    else it would never enter $A$
    and we would be guaranteed to have at least $M$
    elements in $W_j-(Y^j\cup
    A)$, as desired.  Note that the length of
    $\al$ is greater than $j$ since the witnesses are not in $Y^j$.

  Let $s_2>s_1$
  be a stage such the $M$th
  least element in $(W_j-(Y^j\cup
  A))[s_2]$, called
  $x_M$,
  has the least use of any that will ever be in the $M$th
  position of any $(W_j-(Y^j\cup
  A))[s]$ for $s\geq s_2$.  Now, if $x_M$ never enters
  $A$
  by Step 5, we are done, so assume $x_M$
  enters $A$.
  When $x_M$
  enters $A$,
  all witnesses with equal or larger uses also enter $A$,
  and since the next element in the $M$th
  position is a witness, and it cannot have smaller use than $x_M$
  due to minimality, then the next element that enters the
  $M$th
  position has yet to become a witness.  When $x_M$
  enters $A$,
  L$_j$
  does not allow any P$_h^\alpha$
  to add witnesses for any $\alpha$
  such that $\alpha(j)=0$.
  Step 2 has finished acting for $j$
  and $M-1$,
  so it will not cause L$_j$
  to later allow.  The next time L$_j$
  allows will be by Step 1, which means $(W_j-(Y^j\cup
  A))$ has already reached a new expansionary stage.  Thus, at the
  first expansionary stage after $x_M$
  enters $A$,
  the only witness that could be in the $M$th
  position must be a witness for an $\alpha$
  with $\alpha(j)=1$.
  At that new expansionary stage, Step 1 inactivates the witnesses for
  $\alpha$.
  Thus the element in the $M$th
  position can never enter $A$,
  so $(W_j-(Y^j\cup A))$ has at least $M$
  elements. 

\end{proof}

We say that $\alpha$ is on the {\em true path} if $\alpha(j)=0$ if and
only if $W_j-(Y^j\cap \ov A)$ has infinitely many expansionary stages
during the construction.


\begin{lemma}\label{notsemilowonefive} Let $\alpha$ be on the true
  path.  Let $e$ be the least such that 
  either $\displaystyle \lim_s \ell_\alpha(e,s)$ does not exist or it exists and
  $\displaystyle \lim_s \ell_\alpha(e,s)\neq K^D(e)$.  Then
  P$_{h,e}^\alpha$ is reset finitely often ($n$ times) and
  $W_{h(r(\alpha,e,n))}$ is infinite if and only if
  $W_{r(\alpha,e,n)}\cap \ov A$ is finite.  Thus $\ov A$ is not
  semilow$_{1.5}$.
\end{lemma}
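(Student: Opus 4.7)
The proof follows the structure of Lemma \ref{notsemi}, adapted to the present construction. First, I would show that $P_{h,e}^\alpha$ is reset only finitely often, letting $n$ denote the total number of resets so that $S := W_{r(\alpha,e,n)} = S_{h,e}^\alpha$ is fixed in its final form. There are three sources of resets. Step 1 resets happen only when $\alpha(j)=1$ and $W_j-(Y^j\cup A)$ reaches a new expansionary stage; since $\alpha$ is on the true path, such $j$ can contribute only finitely many expansionary stages. Step 4 resets of $P_{h,e}^\alpha$ require $\ell_\alpha(e',s)$ to flip to $1$ for some $e'<e$, and by minimality of $e$ we have $\lim_s \ell_\alpha(e',s) = K^D(e')$ for such $e'$, so this occurs finitely often. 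Step 6(a) resets apply only for $i \leq \langle h, e\rangle$, a finite range; for each such $i$, the construction eventually secures $|W_{f(i)} - (X_i \cup A)| \geq i+1$ permanently, and Step 6(a) ceases to act.

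Next, with $r = r(\alpha,e,n)$ and $S = W_r$ fixed, I would prove the biconditional by cases on $\lim_s \ell_\alpha(e,s)$. The case $\lim_s \ell_\alpha(e,s) = 1$ is ruled out under the hypothesis: if $\ell$ stabilizes at $1$, then Step 5 never fires past some stage, so $D \res u$ is stable for the use $u$ of the last active witness, hence $\Phi_e^D(e)\converges$ and $K^D(e)=1$, contradicting $\lim_s \ell_\alpha(e,s) \neq K^D(e)$. In the case $\lim_s \ell_\alpha(e,s) = 0$, the hypothesis forces $K^D(e)=1$, so $\Phi_e^D(e)\converges$ with stable use $u^*$; from some stage on $\ell=0$ and Step 3 adds a fresh witness of use $u^*$ at every $\alpha$-stage, and none of these ever enter $A$ since $D\res u^*$ is fixed. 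Thus $S \cap \ov A$ is infinite, and simultaneously $W_{h(r)}$ must be finite, because otherwise a later growth of $W_{h(r)}$ with a witness present in $S \cap \ov A$ would fire Step 4 and push $\ell$ back to $1$. Finally, if $\lim_s \ell_\alpha(e,s)$ fails to exist, then $\ell$ oscillates: Step 4 firings give $W_{h(r)}$ infinite, Step 5 firings give $\Phi_e^D(e)\diverges$ (so $K^D(e)=0$), and analyzing how each $D$-change ejects every active witness whose use is affected simultaneously shows $S \cap \ov A$ is finite.

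To conclude that $\ov A$ is not semilow$_{1.5}$, I note that $r = r(\alpha, e, n)$ is obtained computably from $\alpha$, $e$, and $n$ by the Recursion Theorem with Parameters. The biconditional then exhibits, for every total computable $h$, an index $r$ at which the equivalence ``$W_r \cap \ov A$ is finite iff $W_{h(r)}$ is finite'' fails (indeed, our biconditional makes these conditions opposite), ruling out an $m$-reduction to $\mathrm{Fin}$. The main obstacle is the oscillating case: one must verify the finiteness of $S \cap \ov A$ without a clean stabilization of $\ell_\alpha$. This hinges on the fact that Step 5 applies simultaneously to every active witness whose use is affected by the $D$-change, so each oscillation flushes a whole batch of witnesses into $A$. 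A secondary subtlety is the finite-reset argument for Step 6(a), which must account for the ongoing interaction of the outer-splitting construction with the ever-growing set $X_i$ of witnesses from lower-priority nodes.
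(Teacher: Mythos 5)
Your proof is correct, and it reorganizes the case analysis in a way that differs noticeably from the paper's. The paper cases on the \emph{conclusion} side: it splits on whether $W_{h(r(\alpha,e,n))}$ is infinite or finite, and within each of those on whether $\Phi^D_e(e)$ converges or diverges, ruling out two of the four sub-cases as contradictory and extracting one direction of the biconditional from each of the remaining two. You instead case on the \emph{hypothesis} side, namely the three possibilities for $\lim_s\ell_\alpha(e,s)$ ($=1$, $=0$, or nonexistent). The ingredients are the same — the observation that every appointed witness carries use $\phi^D_e(e)[s]$ so that injury of that $\Phi$-computation ejects the witness via Step 5; the observation that Step 4 only fires when $W_{h(r)}$ grows and a witness is present; and the observation that $\lim\ell_\alpha(e',s)=K^D(e')$ for $e'<e$ by minimality — but your arrangement is a little cleaner because the hypothesis of the lemma is stated directly in terms of $\lim\ell_\alpha(e,s)$, so you never need to prove two sub-cases impossible and then discard them. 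A small benefit of the paper's arrangement is that it isolates the direction of the biconditional being proved at the top of each sub-case; your arrangement requires one to notice at the end that the three cases jointly establish the biconditional, which you do, but implicitly.

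Two small gaps worth naming. First, in your $\lim_s\ell_\alpha(e,s)=0$ case you claim Step 3 adds a fresh witness ``at every $\alpha$-stage,'' but Step 3 is also gated by the ``allowed'' discipline imposed by the $L_j$ requirements; the paper proves a separate Claim (inherited from Lemma~\ref{notsemi}) that any $\alpha$ on the true path is allowed to add witnesses infinitely often, and you would need to invoke that claim rather than assert unrestricted access. Second, in the finite-resets preamble for Step 6(a), your statement that the construction ``eventually secures $|W_{f(i)}-(X_i\cup A)|\geq i+1$ permanently'' does hold, but it rests on two facts you should make explicit: new witnesses are always chosen larger than the current stage (so no element already in $W_{f(i)}-(X_i\cup A)$ can later fall into $X_i$), and Step 7 protects the least $i+1$ elements of $W_{f(i)}-(X_i\cup A)$ from coding action $C_k$ for every $k\geq i$, leaving only the finitely many $C_k$ with $k<i$ (each acting once) as possible removers. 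Neither gap is an error of substance, but both are load-bearing and should be stated.
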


\begin{proof} Since $\alpha$ is on the true path, P$_{h,e}^\alpha$ is
  reset only finitely often by L$_j$, $j<h$.  P$_{h,e}^\alpha$ can
  only be reset by O$_i$ in Step 6 when $\langle h,e\rangle \geq i$, and each of
  these O$_i$ can only reset it finitely often, so it only gets reset
  finitely often by O$_i$.

  P$_{h,e}^\alpha$ can only be reset by P$_{h,e'}^\alpha$ for $e'<e$
  when $W_{h(r(\alpha,e',n'))}$ grows, where $n'$ is the number of
  times P$_{h,e'}^\alpha$ has been reset.  When it grows, we set
  $\ell_\alpha(e',s+1)=1$ from $\ell_\alpha(e',s)=0$, so if
  P$_{h,e'}^\alpha$ resets P$_{h,e}^\alpha$ infinitely often, then
  $\lim_s \ell_\alpha(e',s)$ does not exist, contradicting the
  assumption that it exists and equals $K^D(e')$.  Thus
  P$_{h,e}^\alpha$ is reset finitely often.

  {\em Case 1}: $W_{h(r(\alpha,e,n))}$ is infinite.

  {\em Case 1a}: $\Phi_e^D(e)$ diverges.  Thus,
  $\lim_s \ell_\alpha(e,s)\neq 0$.  If $S_{h,e}^\alpha$ is finite,
  we're done, since $W_{r(\alpha,e,n)}=S_{h,e}^\alpha$.  If
  $S_{h,e}^\alpha$ is infinite, then $\Phi_e^D(e)[s]$ converges
  infinitely often.  Any elements that we add when $\Phi_e^D(e)[s]$
  converges will be enumerated into $A$ when $\Phi_e^D(e)[t]$ diverges
  for $t>s$ because the use of each witness equals the use of the
  computation $\Phi_e^D(e)[s]$.  Thus
  $W_{r(\alpha,e,n)}\cap \ov A=S_{h,e}^\alpha\cap \ov A$ is finite.

  {\em Case 1b}: $\Phi_e^D(e)$ converges.  Thus,
  $\lim_s \ell_\alpha(e,s)\neq 1$.  If $W_{r(\alpha,e,n)}\cap \ov A$
  is infinite, then $\lim_s \ell_\alpha(e,s)$ cannot equal 0 because
  there will be infinitely many stages where Step 4 applies, setting
  $\ell_\alpha(e,s+1)$ to 1.  Thus, if $\Phi_e^D(e)\converges$, then
  $\lim_s \ell_\alpha(e,s)$ does not exist, but this is also
  impossible since the only time that $\ell_\alpha(e,s)$ changes from
  $1$ to $0$ is when $\Phi_e^D(e)[s]$ diverges or changes its
  computation, meaning that $\Phi_e^D(e)$ must diverge.

Thus, if $W_{h(r(\al,e,n))}$ is infinite, then $W_{r(\al,e,n)}\cap \ov A$ is finite.

  {\em Case 2}: $W_{h(r(\alpha,e,n))}$ is finite.  Note that
  $\lim_s \ell_\alpha(e,s)$ must exist because it can change to $1$
  only finitely often since $W_{h(r(\alpha,e,n))}$ is finite.

  {\em Case 2a}: $\Phi_e^D(e)$ diverges.  Thus,
  $\lim_s \ell_\alpha(e,s)=1$.  Note that at the final stage $s$ when
  $\ell_\alpha(e,s)$ changes from $0$ to $1$, $S_{h,e}^\alpha$
  contains an active witness not in $A$.  This means that
  $\Phi_e^D(e)[s]\converges$, else the active witness would have
  entered $A$.  Since $\Phi_e^D(e)\diverges$, then the computation
  $\Phi_e^D(e)[s]$ must eventually be injured, at which point
  $\ell_\alpha(e,t)$ will become $0$, contradicting that the limit is
  $1$.

  {\em Case 2b}: $\Phi_e^D(e)$ converges.  Thus,
  $\lim_s \ell_\alpha(e,s)=0$.  Because $\Phi_e^D(e)$ converges, if
  infinitely many elements enter $S_{h,e}^\alpha$, then infinitely
  many elements stay in $S_{h,e}^\alpha\cap \ov A$, since they only
  enter if $\Phi_e^D(e)[s]$ has its computation injured.  Thus it
  suffices to show that infinitely elements enter $S_{h,e}^\alpha$.
  To do so, we will show that at infinitely many stages, Step 3 acts.
  We already know that for almost all $s$, $\ell_\alpha(e,s)=0$ and
  $\Phi_e^D(e)[s]\converges$, so the first two bullet points of Step 3 are met
  for almost all $s$.

  To check that the third bullet point is met, note that for each
  $e'<e$, either $K^D(e')=1=\lim_s \ell_\alpha(e',s)$, in which case
  for almost all $s$, $e'$ does not prevent $e$ from acting, or
  $K^D(e')=0=\lim_s \ell_\alpha(e',s).$ In the latter case, there are
  two possibilities.  If for almost all $s$,
  $\Phi_{e'}^D(e')[s]\diverges$, then for almost all $s$, $e'$ does not
  prevent $e$ from meeting the third bullet point.  Otherwise, there
  are infinitely many stages such that $\Phi_{e'}^D(e')[s]\converges$,
  but since $\Phi_{e'}^D(e')\diverges$, the computation changes
  infinitely often, so eventually the computation will have changed at
  least $e$ times.  Thus, $e'$ only prevents $e$ from adding a new
  witness for finitely many stages.

  Now that we have seen that for almost all $s$, the three bullet
  points are met, we must still show that $\alpha$ is {\em allowed} to
  add a witness at a stage where it is able to act.

  {\em Claim}: For every $\alpha$ on the true path, $\alpha$ is
  allowed to add witnesses infinitely often.

For the proof of the claim, see Lemma \ref{notsemi}, replacing the reference to Lemma \ref{semilowonefive} with Lemma \ref{semilowtwoprep}.

  Now wait until a stage $s_0$ such that the approximation to the true
  path never goes to the left of $\alpha$ and the bullet points do not
  prevent P$_{h,e}^\alpha$ from acting after stage $s_0$.  Let
  $s_1>s_0$ be any $\alpha$-stage, and $s_2\geq s_1$ the next stage
  that $\alpha$ is allowed to add witnesses.  Then $s_2$ is either an
  $\alpha$-stage or $\delta_{s_2}$ is to the right of $\alpha$, so we
  may perform Step 3 for P$_{h,e}^\alpha$.  Thus, there are infinitely
  many stages at which we will add witnesses to $S_{h,e}^\alpha$, so
  it will be infinite and have infinite intersection with $\ov A$.

\end{proof}

\begin{lemma}\label{zerodouble} $\bf 0''$ can compute
  $\{ j\ | \ W_j\cap \ov A$ is infinite$\}$.
\end{lemma}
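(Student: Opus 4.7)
The plan is to decompose the question via Lemma~\ref{semilowtwoprep}: $W_j\cap\ov A$ is infinite if and only if either (i) the set of expansionary stages of $W_j-(Y^j\cup A)$ is infinite, or (ii) $W_j\cap Y^j\cap\ov A$ is infinite.  Condition~(i) concerns a c.e.\ set, so ``infinite'' is a $\Pi^0_2$ question, which $\bf 0''$ decides.  My remaining task is to show that $\bf 0''$ decides~(ii).

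I will construct, uniformly from $\bf 0''$, a c.e.\ index $v^j$ such that $W_{v^j}=^* Y^j\cap\ov A$; then $W_j\cap Y^j\cap\ov A$ is infinite iff $W_j\cap W_{v^j}$ is infinite, a $\Pi^0_2$ question in the parameter $v^j$ and hence decidable from $\bf 0''$.  Let $Z^j$ denote the c.e.\ set of $x$ that become inactive witnesses for some P$_{h,e}^\alpha$ with $h\leq j$ without entering $A$; this set is c.e.\ because Step~5 acts only on \emph{active} witnesses and inactivation (via Steps~1, 4, 6) is permanent.  I will use $\bf 0''$ to locate the true path: for each $\alpha$ to the left of the true path, P$_h^\alpha$ is eventually disallowed by L$_{j'}$ at the first disagreement level $j'$ and so contributes only finitely many witnesses; for each $\alpha$ to the right of the true path, Step~1 resets P$_h^\alpha$ infinitely often at $j'$, forcing $Y_{h,e}^\alpha\cap\ov A\subseteq Z^j$.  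For each $\alpha$ on the true path with $h\leq j$, let $E_\alpha=\{e : S_{h,e}^\alpha\cap\ov A\text{ is infinite}\}$.  For each $e\in E_\alpha$, I will use $\bf 0''$ to find the final reset count $n_e$ and take $W_{r(\alpha,e,n_e)}$ as a c.e.\ description of $S_{h,e}^\alpha\cap\ov A$ up to a finite set (Case~2b of Lemma~\ref{notsemilowonefive}).  A c.e.\ index for the finite union of $Z^j$ with these $W_{r(\alpha,e,n_e)}$ is the desired $v^j$.

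The main obstacle is verifying that $E_\alpha$ is finite for each $\alpha$ on the true path, so that the union above is finite and $v^j$ is genuinely $\bf 0''$-computable.  Let $e^*$ be the critical index for $\alpha$ furnished by Lemma~\ref{notsemilowonefive}.  For $e<e^*$ we have $\lim_s\ell_\alpha(e,s)=K^D(e)$, and the case analysis of Lemma~\ref{notsemilowonefive} shows $S_{h,e}^\alpha\cap\ov A$ is finite in every possibility.  For $e>e^*$: if $\lim_s\ell_\alpha(e^*,s)=0$ (so $\Phi_{e^*}^D(e^*)\converges$), then $\Phi_{e^*}^D(e^*)$ changes computation only boundedly often, and the fourth bullet of Step~3 prevents P$_{h,e}^\alpha$ from adding witnesses for all sufficiently large $e$ once this computation stabilizes, so $S_{h,e}^\alpha$ is finite; if $\lim_s\ell_\alpha(e^*,s)$ fails to exist, then Step~4 fires at $e^*$ infinitely often and resets P$_{h,e}^\alpha$ infinitely often for every $e>e^*$, placing all its witnesses into $Z^j$.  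The case $\lim_s\ell_\alpha(e^*,s)=1$ is incompatible with $\Phi_{e^*}^D(e^*)\diverges$, since each injury to that computation triggers Step~5 and returns $\ell_\alpha(e^*,s)$ to $0$.  In every subcase $E_\alpha$ is finite, and the reduction goes through.
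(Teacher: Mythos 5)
Your proposal follows the paper's own argument quite closely: decompose via Lemma~\ref{semilowtwoprep}, use $\bf 0''$ to locate the true path, and produce a c.e.\ index $v^j$ with $W_{v^j}=^*Y^j\cap\ov A$ by splitting the contribution of each $Y^\alpha_{h,e}$ according to the position of $\alpha$ relative to the true path, handling the critical index $e^*$ with Lemma~\ref{notsemilowonefive} and determining whether $S^\alpha_{h,e^*}\cap\ov A$ is infinite by asking $\bf 0''$ whether $W_{h(r(\alpha,e^*,n_{e^*}))}$ is infinite. The overall structure, the c.e.\ set of inactivated witnesses for the right-of-true-path nodes, and the finiteness analysis for $e\neq e^*$ all mirror the paper's proof.

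There is, however, a local misstep. For $\alpha$ strictly to the left of the true path you assert that P$_h^\alpha$ is ``eventually disallowed by L$_{j'}$'' at the first disagreement level $j'$, and infer finiteness from that. This is not generally true: L$_{j'}$ disallows only when a Step~5 enumeration removes an element from $W_{j'}-(Y^{j'}\cup A)$, and nothing forces such a removal to occur, so L$_{j'}$ may never disallow $\alpha$. The paper's reason is different and is what actually works: such an $\alpha$ has only finitely many $\alpha$-stages, and the action rule of Step~3 permits a node strictly left of $\delta_s$ to act at most once between its $\alpha$-stages (in particular at most once after its final $\alpha$-stage), so the contribution is finite regardless of what L$_{j'}$ does. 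Two smaller slips worth fixing: the Step~3 condition you invoke is the \emph{third} bullet (Theorem~\ref{TwoOSPnotOneFive}'s Step~3 has three bullets; the four-bullet list is from Theorem~\ref{onefivenotsemilow}); and the ``case analysis of Lemma~\ref{notsemilowonefive}'' only addresses the critical index, so it does not directly cover $e<e^*$. For those $e'<e^*$ the paper argues directly: if $Y^\alpha_{h,e'}$ is infinite then $\lim_s\ell_\alpha(e',s)=0=K^D(e')$, so $\Phi^D_{e'}(e')$ diverges, so every witness has its use injured and enters $A$, leaving $Y^\alpha_{h,e'}\cap\ov A$ finite. None of these affects the decomposition you chose, which is the right one, but the left-of-true-path justification needs replacing.
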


\begin{proof}
  By Lemma \ref{semilowtwoprep}, it suffices to show that $\bf 0''$
  can compute $\{j \mid W_j\cap \ov A\cap Y^j$ is infinite$\}$, since
  $\bf 0''$ can determine if the set of expansionary stages of
  $W_j-(Y^j\cup A)$ is infinite since the set of expansionary stages
  is a \ce set.

  We will show that $\ov A\cap Y^j$ is a \ce set and that there is a
  $\bf 0''$-computable function $m(j)$ such that
  $W_{m(j)}=^*\ov A\cap Y^j$, so $W_j\cap\ov A\cap Y^j$ is infinite if
  and only if $W_j\cap W_{m(j)}$ is infinite, which is
  $\0''$-computable.

  Note that $\bf 0''$ can determine the true path on the tree of
  strategies.  Consider each $Y_{h,e}^\al$ that makes up $Y^j$.  If
  $\alpha$ is to the right of the true path, then P$_{h,e}^\alpha$
  gets reset infinitely often, so the set of elements in
  Y$_{h,e}^\alpha\cap \ov A$ is the \ce set given by the set of all
  $x$ in Y$_{h,e}^\alpha\cap \ov A$ at a stage when P$_{h,e}^\alpha$
  is reset.  Put these into $W_{m(j)}$.

  If $\alpha$ is to the left of the true path, there are only finitely
  many $\alpha$-stages.  Step 3 is either allowed to add witnesses at
  the final $\alpha$-stage or at one later stage.  Thus
  S$_{h,e}^\alpha$ only gets finitely many elements, so it may be
  ignored.

  For each $\alpha$ on the true path of length $i\leq j$, use
  $\bf 0''$ to find the least $e$ such that
  $\lim_s \ell_\alpha(e,s)\neq K^D(e)$.  For $e'<e$, note that if
  there are infinitely many witnesses for P$_{h,e'}^\alpha$, then
  $\ell_\alpha(e',s)$ must be 0 infinitely often, so its limit must be
  0.  We also have that $\Phi^D_{e'}(e')[s]$ must converge infinitely
  often for us to add infinitely many witnesses, so it must also
  diverge infinitely often so that
  $K^D(e')=\lim_s \ell_\alpha(e',s)=0$.  Thus, once P$_{h,e'}^\alpha$
  stops being reset, which we know must happen by the proof of Lemma
  \ref{notsemilowonefive}, all new witnesses eventually enter $A$.
  Thus, P$_{h,e'}^\alpha$ only contributes finitely much to
  $Y^j\cap \ov A$ and can be ignored.

  Now consider $e'>e$.  If P$_{h,e}^\alpha$ resets P$_{h,e'}^\alpha$
  infinitely often, then the contribution of P$_{h,e'}^\alpha$ to
  $Y^j\cap \ov A$ is given by the set of elements that are witnesses
  at stages when it gets reset, and these can be added to $W_{m(j)}$.
  Suppose P$_{h,e}^\alpha$ resets P$_{h,e'}^\alpha$ only finitely
  often.  First, consider the case where $\lim_s\ell_\alpha(e,s)=1$. Then
  $\Phi_e^D(e)\diverges$ by the choice of $e$.  However, if $\lim_s\ell_\alpha(e,s)=1$, then when $\ell_\alpha(e,s)$ gets defined as $1$, it contains an active witness, and the witness cannot enter $A$ else it would reset $e'$, so $\Phi_e^D(e)\converges$.  This is a contradiction, so $\lim_s\ell_\alpha(e,s)\neq 1$.  Since P$_{h,e}^\alpha$ resets P$_{h,e'}^\alpha$ only
  finitely often, we must have that $\lim_s \ell_\alpha(e,s)=0$.
  Thus, $\Phi_e^D(e)\converges$, so $\Phi_e^D(e)[s]$ changes
  computations some finite number of times. By the final bullet of
  Step 3, only finitely many $e'>e$ will ever be able to add
  witnesses, and will only be able to do so finitely often.  Thus
  after a finite stage, no P$_{h,e'}^\alpha$ can add any witnesses,
  for $e'>e$, so these collectively contribute only finitely many
  elements to $Y^j\cap \ov A$ and can be ignored.

  For $e$ itself, $S_{h,e}^\alpha\cap \ov A$ is infinite if and only
  if $W_{h(r(\alpha,e,n))}$ is finite, for $n$ the number of times
  P$_{h,e}^\alpha$ is reset, by Lemma \ref{notsemilowonefive}.  We can
  ask $\0''$ to determine this.  If we know that
  $S_{h,e}^\alpha\cap \ov A$ is finite, we can ignore it.  If we know
  it is infinite, then $W_{h(r(\alpha,e,n))}$ is finite, so $\lim_s \ell_\alpha(e,s)=0$, and thus
  $\Phi^D_e(e)\converges$ and so almost all elements that we put into
  $S_{h,e}^\alpha$ remain in $\ov A$, so we can put them all into
  $W_{m(j)}$.
\end{proof}

%
%
%
%
%

\begin{lemma} The following hold for all $i\in\omega$:\begin{itemize}
  \item[(a)] $W_i=W_{f(i)}\sqcup W_{g(i)}$,
  \item[(b)] $\Wfi\cap \ov A=^* \es$, and
  \item[(c)] $W_i\cap \ov A$ infinite $\implies$ $\Wfi\cap \ov A$ is
    nonempty.
  \end{itemize}
\end{lemma}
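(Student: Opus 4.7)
The plan is to verify each of (a), (b), (c) by direct analysis of Step 6 of the construction. Part (a) is immediate: Step 6 routes each $x \in W_{i,s+1}$ not yet placed into exactly one of $W_{f(i)}$ (via rule (a), (b), or (c)) or $W_{g(i)}$ (via rule (d)), and no $x \notin W_i$ is ever enumerated into either set. Hence $W_i = W_{f(i)} \sqcup W_{g(i)}$ follows at once.

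For part (b), I would bound the contribution to $W_{f(i)} \cap \ov A$ from each of the three rules that enumerate into $W_{f(i)}$. Rule (a) leaves at most $i+1$ elements in $W_{f(i)} - (X_i \cup A)$ at any stage, by its size condition; Step 7 is arranged to never put any of the least $i+1$ elements of this set into $A$ via coding, and for an (a)-added element the only other route into $A$ is Step 5, which requires it later to become an active witness and have its use injured. Rule (b) contributes at most one inactive witness by its gating condition. For rule (c), the key observation is that the first moment any witness added via (b) or (c) becomes inactive --- that is, as soon as any controlling P$_{h,e}^\alpha$ is reset --- $W_{f(i)}$ permanently contains an inactive witness, disabling both (b) and (c) from then on. So I only need to bound the number of active witnesses added by (c) before the first such reset. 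For this I would use the tree of strategies: $\alpha$ strictly to the right of the true path has its P's reset by Step 1 infinitely often, $\alpha$ strictly to the left acts only finitely often in Step 3, and on the true path the constraint $\langle h, e\rangle < i$ leaves only finitely many relevant $(h,e,\alpha)$ triples.

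For part (c), I would argue by contradiction. Assume $W_{f(i)} \cap \ov A = \es$; then $W_{f(i)} - (X_i \cup A) = \es$ at every stage, so rule (a)'s size condition is vacuously met throughout the construction and rule (a) deposits every $x \in W_i - (X_i \cup A)$ into $W_{f(i)}$ when Step 6 processes it. By Lemma \ref{semilowtwoprep} applied with $j = i$, together with the inclusion $X_i \subseteq Y^i$, the hypothesis that $W_i \cap \ov A$ is infinite either produces an element $x \in W_i - (X_i \cup A)$ that permanently stays out of $A$ (via the expansionary-stage clause, argued as in the proof of Lemma \ref{semilowtwoprep}), contradicting $W_{f(i)} \cap \ov A = \es$ through rule (a); or produces an inactive witness for some P with $\langle h, e\rangle < i$ inside $W_i \cap \ov A$, which rule (b) then moves into $W_{f(i)}$, again a contradiction.

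The main obstacle I anticipate lies in part (b), specifically the bound on rule (c)'s contribution in the scenario where no witness added by (c) ever becomes inactive; this will require a careful count across the tree of strategies, reusing the convergence analysis of Lemma \ref{notsemilowonefive} to rule out a hidden infinite source of contributions from distinct $(h,e,\alpha)$ triples. A secondary technical difficulty is ensuring that the second subcase of part (c) actually provides an inactive witness with $\langle h, e\rangle < i$ rather than merely $h \leq i$; if the infinite portion of $W_i \cap Y^i \cap \ov A$ happens to consist of witnesses with $h = i$, I would need either to refine the application of Lemma \ref{semilowtwoprep} to the sets defined by $\langle h, e\rangle < i$, or to reduce to the first subcase directly.
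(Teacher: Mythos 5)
Part (a) is fine, and the governing idea you identify for rule (c) in part (b) --- that once an inactive witness lands in $W_{f(i)}\cap\ov A$ it stays forever and permanently disables both 6b and 6c --- is exactly the right observation and matches the paper. However, you then propose to bound \emph{the total number of 6c-additions before the first such inactivation}, and that is the wrong quantity: it can be infinite. For example, if $\Phi_e^D(e)$ diverges, active witnesses for P$_{h,e}^\alpha$ enter $A$ via Step 5 again and again, each time making $W_{f(i)}\cap S_{h,e}^\alpha$ empty so that 6c can act again; infinitely many 6c-additions can occur with no reset at all. The paper instead bounds the number of 6c-added elements that \emph{permanently remain outside $A$}: for each of the finitely many triples $(h,e,\alpha)$ with $\langle h,e\rangle<i$ there can be at most one, because a lingering active one keeps $W_{f(i)}\cap S_{h,e}^\alpha$ nonempty (blocking 6c for that triple), and a lingering inactive one disables 6c globally. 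Once you track the right quantity, your feared ``hidden infinite source of contributions'' evaporates, and the appeal to Lemma \ref{notsemilowonefive} and a tree-by-tree count is unnecessary.

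Part (c) has two genuine gaps. First, the inference ``$W_{f(i)}\cap\ov A=\es$ implies $W_{f(i)}-(X_i\cup A)=\es$ at every stage'' is false: the left-hand side is a statement about the final set, while the right-hand side is a stage-by-stage condition, and an element can sit in $W_{f(i)}-(X_i\cup A)[s]$ for many stages before entering $A$, filling the quota to $i+1$ and blocking rule (a). So rule (a) is not ``vacuously met throughout the construction,'' and the subsequent claim that rule (a) deposits every $x\in W_i-(X_i\cup A)$ is unjustified. Second, your fallback via rule (b) fails for a structural reason you did not flag: Step 6 processes each $x\in W_i$ exactly once, at the moment $x$ first appears unplaced in $W_{i,s+1}$; an element that becomes an inactive witness only after having been routed to $W_{g(i)}$ is never revisited, so ``an inactive witness inside $W_i\cap\ov A$'' does not force rule (b) to fire. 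The paper's actual argument for (c) runs a case split on whether $i+1$ elements eventually enter $W_{f(i)}$ via 6a, and in the affirmative case it relies essentially on two protections your proposal omits: the Step-7 coding rule, which forbids enumerating any of the least $i+1$ elements of $W_{f(i)}-(X_i\cup A)$ into $A$ when acting for $k\geq i$, and the explicit resetting of all P$_{h,e}^\alpha$ with $\langle h,e\rangle\geq i$ at the moment of each 6a-addition. Without these, nothing prevents all 6a-added elements from being coded or dumped into $A$, and the desired survivor in $W_{f(i)}\cap\ov A$ is not guaranteed.
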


\begin{proof}
  By construction, we put every element of $W_i$ into either $\Wfi$ or
  $\Wgi$, but not both, so part (a) holds.

  {\em Proof of (b)}: We may add elements to $\Wfi$ by Steps 6a, 6b,
  and 6c.  The set $(\Wfi\cap\ov A)-X_i=\Wfi-(X_i\cup A)$ must be
  finite because only Step 6a can add elements to it and only when its
  size is less than $i+1$, so at most $i+1$ elements will stay in
  $W_{f(i)}-(X_i\cup A)$
  forever.  

  To see that $\Wfi\cap\ov A\cap X_i$ is finite, we look at Steps 6b
  and 6c.  
  If Step 6b adds an inactive witness to $W_{f(i)}$, it will never be
  able to do so again, as inactive witnesses remain inactive witnesses
  forever for the remainder of the construction.  Step 6c allows us to
  take an element from $W_i\cap S_{h,e}^\alpha\cap\ov A$ only when
  there is not already an element in
  $\Wfi\cap S_{h,e}^\alpha\cap\ov A$, for $\langle h,e\rangle<i$.
  Thus, even if we put infinitely many elements in by Step 6c, only
  finitely many remain forever.

  {\em Proof of (c)}: Suppose $W_i\cap \ov A$ is infinite.  Suppose
  there are eventually $i+1$ elements that enter $\Wfi$ by Step 6a.
  When we add elements by Step 6a, they are taken from
  $W_i-(X_i\cup A)$, which means that they cannot be put into $A$ by
  action of P$_{h,e}^\alpha$ for any $\langle h,e\rangle<i$.  When we
  put the elements into $\Wfi$, we also reset P$_{h,e}^\alpha$ for
  each $\langle h,e\rangle\geq i$, so they cannot enter $A$ from
  action of these requirements, either.  They can only be enumerated
  into $A$ by C$_k$, for $k<i$, which each only act once.  Thus, at
  least one element will remain in $\Wfi\cap \ov A$.  If there are not
  eventually $i+1$ elements that enter $\Wfi$ by Step 6a, then
  $W_i-(X_i\cup A)$ must be finite, so $W_i\cap \ov A\subseteq^* X_i$,
  and so $W_i\cap\ov A\cap X_i$ is infinite.  If Step 6b ever acts by enumerating into $W_{f(i)}$,
  then $\Wfi$ contains an inactive witness, which will never enter
  $A$.  Otherwise, we know there is some $\langle h,e\rangle<i$ such
  that there are infinitely many witnesses for P$_{h,e}^\alpha$
  contained in $W_i\cap \ov A$, by the pigeonhole principle.  Since
  Step 6b never acts, all of the witnesses are active when they enter
  $W_i$.  Suppose $\Wfi\cap \ov A$ is empty.  Then every time Step 6c
  acts by adding an active witness to $\Wfi$, that element later
  enters $A$ when the current computation $\Phi_e^D(e)[s]$ changes
  because $D$ changes below the use.  For each $x$ that is an active
  witness for P$_{h,e}^\alpha$ when it enters $W_i$, it either enters
  $W_{f(i)}$, in which case it later enters $A$, or there is already
  an active witness $y$ for P$_{h,e}^\alpha$ in $\Wfi\cap \ov A$ when
  it enters $W_i$.  In that case, we know that $y$ goes into $A$, at
  which stage $x$ must go into $A$ as well because they have the same
  use.  
  Thus, if $\Wfi\cap \ov A$ is empty, then $W_i\cap \ov A$ is finite,
  proving part
  (c). 
\end{proof}

\begin{lemma} $D\leT A$.
\end{lemma}

\begin{proof}
  To determine if $k\in D$, ask if $A$ contains any elements of $F_k$.
  $F_k\cap A$ is nonempty if and only if $k\in D$.  This is because
  only C$_k$ can enumerate elements of $F_k$ into $A$, and C$_k$ is
  always able to act because $F_k$ has more elements in it than are
  prohibited.  To see that $F_k$ has more elements than are
  prohibited, recall that for each $M>0$ and $j+M=k$, we prohibit the
  least $M$ elements in $(W_j-(Y^j\cup A))[s]$, and for each
  $i\leq k$, we prohibit the least $i+1$ elements in
  $W_{f(i)}-(X_i\cup A)$.  For $k=0$, one element is prohibited and
  $F_0$ has two elements.  For $k>0$, the first clause prohibits less
  than $2k^2$ elements because
  $\displaystyle\sum_{M=1}^k M=\frac{(k+1)k}2=\frac{k^2}2+\frac
  k2<2k^2$.
  The second clause prohibits less than $4k^2$ elements because
  $\displaystyle\sum_{i=0}^k
  i+1=\frac{(k+1)(k+2)}2=\frac{k^2}2+\frac{3k}2+1<4k^2$.
  Thus fewer than $6k^2$ elements are prohibited and $F_k$ contains
  $6k^2$ elements.

\end{proof}

\begin{lemma} $A\leT D$.
\end{lemma}

\begin{proof}

  $\omega^{[0]}\cap A\leT D$ since no element from $\omega^{[0]}$
  enters $A$ unless it is in $F_k$ and $k$ enters $D$.

  For $\omega-\omega^{[0]}$, $x\in A$ if and only if $x$ becomes a
  witness and then enters $A$.  The set of witnesses is computable
  because if $x$ is not a witness by stage $x$, it will never become a
  witness. If $x$ is a witness, we ask what its use $u$ is as a
  witness.  Let $s_x$ be such that $D\res u=D_{s_x}\res u$.  Then
  $x\in A$ if and only if $x\in A_{s_x}$.
\end{proof}

This concludes the proof of the theorem.
\end{proof}

\section{Non-Low$_2$ degrees and sets whose complement is not
  semilow$_2$}

In this section we will provide an index set argument that every
nonlow$_2$ c.e\ degree contains a c.e.\ set whose complement is
not semilow$_2$.  Soare \cite[IV 4.11]{Soare:87} shows via an index
set argument that every non-low c.e\ degree contains a c.e.\ set
whose complement is not semilow.

Recall that $Inf$ is the set of indices for infinite c.e.\ sets.
$Inf^B$ is $\Pi^0_2$ complete. $Inf^B = \{e \mid W^B_e$ is infinite$\}$
and $\overline{B''} \leq_1 Inf^B$.  Let $Inf(X) = \{e \mid W_e \cap X $
is infinite$\}$. Recall $\overline{A}$ is semilow$_{1.5}$ if
$Inf(\overline{A}) \leq_1 Inf$ and $\overline{A}$ is semilow$_2$ if
$Inf(\overline{A}) \leq_T Inf$

\begin{theorem}\label{notlow22}
  For every \ce set $B$, there is a \ce $A\equivT B$ such that 
  $Inf^B\leq_1 Inf(\ov A)$.
\end{theorem}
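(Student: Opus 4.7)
The plan is to build the c.e.\ set $A$ together with a computable function $f$ so that, for each index $e$, the c.e.\ set $W_{f(e)}\cap \ov A$ contains precisely one surviving witness per element of $W_e^B$. This will make $W_{f(e)}\cap \ov A$ infinite exactly when $W_e^B$ is infinite, giving $Inf^B\leq_m Inf(\ov A)$, which is upgraded to $\leq_1$ by padding $f$ to be injective. To obtain $B\leT A$, I reserve column $\omega^{[0]}$ to code $B$ directly into $A$, enumerating the copy $\<b,0\>$ of $b$ into $A$ as soon as $b$ enters $B$; witnesses intended for row $e$ of the reduction are drawn from $\omega^{[e+1]}$ so that the coding and the witness mechanism do not interact.

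At each stage $s$ the construction does two things. First, for each pair $e,n<s$ with $n\in W^{B_s}_{e,s}$ via a computation of use $u$, if $n$ does not currently have an active witness attached to a computation with that use (either because $n$ has just appeared in the approximation or because the previously recorded computation for $n$ has just been injured by a $B$-change below its use), appoint a fresh $x=x_{e,n,s}\in\omega^{[e+1]}$, enumerate $x$ into $W_{f(e)}$, and record the use $u$. Second, for every previously appointed witness $x_{e,n,t}$ with recorded use $u_{e,n,t}$, check whether $B_s\res u_{e,n,t}\neq B_t\res u_{e,n,t}$; if so, enumerate $x_{e,n,t}$ into $A$ and mark it inactive. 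The index $f(e)$ is obtained uniformly in $e$ from the $s$-$m$-$n$ theorem and then padded to be injective.

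The verification splits by cases on each $n$. If $n\in W_e^B$, then some computation witnessing $n\in W^B_e$ is eventually permanent from some stage $s_0$ on, and the witness appointed for that computation is never injured, so it stays in $W_{f(e)}\cap \ov A$; all earlier witnesses for $n$ are enumerated into $A$. If $n\notin W_e^B$, every witness ever appointed for $n$ is attached to a non-permanent computation and so eventually sees a $B$-change below its use, at which point it is enumerated into $A$. Therefore $W_{f(e)}\cap \ov A$ has exactly one element per element of $W_e^B$ and nothing else, giving $Inf^B\leq_m Inf(\ov A)$, and then $\leq_1$ after padding. For the Turing equivalence, $B\leT A$ is immediate from the coding column $\omega^{[0]}$; $A\leT B$ holds because, using $B$ as an oracle, one can recompute the construction, determine each witness's recorded use, and check whether $B$ changed below that use, thereby deciding membership in $A$.

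There is no genuine technical obstacle here: the theorem is essentially an index-set observation packaged as a uniform construction, the relativized analogue of Soare's $Inf\leq_1 Inf(\ov A)$ argument in the nonlow case. The only care required is in bookkeeping, specifically keeping the $B$-coding column $\omega^{[0]}$ disjoint from the witness columns $\omega^{[e+1]}$ so that each case of the verification is independent of the other, and verifying that at most one witness per pair $(e,n)$ survives into $\ov A$ by tying witnesses to specific recorded uses rather than to the element $n$ alone.
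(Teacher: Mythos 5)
Your proposal is correct and follows essentially the same strategy as the paper's proof: both track the approximating computations $\Phi_e^{B_s}(n)[s]$ with fresh witnesses (the paper's markers $M_{\langle i,x\rangle}$), enumerate a witness into $A$ exactly when its associated computation is injured by a $B$-change below the use, and set $W_{f(e)}$ to be the collection of all such witnesses, so that $W_{f(e)}\cap\ov A$ is infinite iff $W_e^B$ is infinite. The one real difference is the mechanism for $B\leT A$: you reserve a column $\omega^{[0]}$ and code $b\in B$ directly as $\<b,0\>\in A$, whereas the paper uses a secondary family of movable markers $\Gamma_b$ with use $b$ that settle on a permanent element of $\ov A$ precisely when $b\notin B$; your direct coding is a bit more transparent, while the paper's $\Gamma_b$-scheme keeps the entire construction in the uniform marker framework, and both yield the same Turing equivalence.
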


\begin{Cor}
  If $B$ is nonlow$_2$, then there exists a \ce $A\equivT B$ such that
  $\ov A$ is not semilow$_2$ (and not semilow$_{1.5}$).
\end{Cor}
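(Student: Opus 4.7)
The plan is to build a c.e.\ set $A\eT B$ together with a computable function $f$ such that, for every $e$, $W_e^B$ is infinite iff $W_{f(e)}\cap\ov A$ is infinite. This gives the desired $1$-reduction $Inf^B\leq_1 Inf(\ov A)$, and the corollary follows immediately: if $\ov A$ were semilow$_2$ we would have $Inf^B\leT\0''$, hence $B''\leT\0''$, contradicting $B$ being nonlow$_2$.

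Split $\omega$ into columns $\omega^{[c]}$, reserve $\omega^{[0]}$ for coding $B$ into $A$ via disjoint finite sets $F_k$ (so $k\in B\iff F_k\cap A\neq\es$, as in the previous sections), and arrange that $W_{f(e)}\subseteq\omega^{[e+1]}$, with $f(e)$ a computable index obtained by the Recursion Theorem with Parameters. At stage $s+1$ the construction does three things. First (\emph{witness creation}): for each $(e,x)$ with $e,x\leq s+1$ such that $\Phi_e^{B_{s+1}}(x)[s+1]\converges$ with use $u$ and no witness $y^{e,x}$ is currently active, appoint a fresh element $y^{e,x}\in\omega^{[e+1]}$ larger than $s$ and every previously used witness, enumerate it into $W_{f(e)}$, and record the use $u$. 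Second (\emph{witness retraction}): for each active $y^{e,x}$ whose recorded use $u$ satisfies $B_{s+1}\res u\neq B_s\res u$, enumerate $y^{e,x}$ into $A$ and deactivate the pair $(e,x)$. Third (\emph{coding}): for each $k\in B_{s+1}-B_s$, enumerate one unused element of $F_k$ into $A$.

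The verification is then routine. $A$ is c.e.\ since the enumeration uses only the computable approximation $B_s$. $B\leT A$ by coding. $A\leT B$ because, using $B$ as an oracle, the first $x$ stages of the construction suffice to decide whether and when $x$ was appointed (the ``larger than $s$'' rule forbids any first appointment of $x$ after stage $x$), after which $x\in A$ iff $B$ later changed below the recorded use, a $B$-decidable condition. For the $1$-reduction: if $W_e^B$ is infinite, each $x\in W_e^B$ eventually yields $\Phi_e^{B_s}(x)[s]\converges$ with some use $u^\ast_x$ and $B\res u^\ast_x$ never again changing, so the last appointed $y^{e,x}$ is never retracted and contributes a distinct element of $W_{f(e)}\cap\ov A$; infinitely many $x$'s give infinitely many such witnesses. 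Conversely, if $W_e^B$ is finite, each $x\in W_e^B$ contributes exactly one stable witness, while every $y^{e,x}$ appointed for some $x\notin W_e^B$ is eventually retracted: otherwise $B$ would stabilize below the recorded use and $\Phi_e^B(x)\converges$ would place $x\in W_e^B$, a contradiction.

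The only subtle point is this contrapositive in the finite case, which forces every false-positive approximation to contribute only to $A$ and not to $W_{f(e)}\cap\ov A$. No tree of strategies, expansionary-stage analysis, or outer-splitting bookkeeping is needed; the argument is essentially a permitting-plus-index-set construction in the spirit of Soare's IV.4.11, with $Inf$ replacing semilowness.
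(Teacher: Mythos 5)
Your proof is correct and is essentially the same as the paper's: the paper first establishes Theorem 5.1 (for every c.e.\ $B$ there is a c.e.\ $A\equivT B$ with $Inf^B\leq_1 Inf(\ov A)$) by precisely this marker-dumping construction and then derives the corollary exactly as you do. The only differences from the paper's construction are organizational --- you isolate the $(e,x)$-witnesses in dedicated columns $\omega^{[e+1]}$ and code $B$ into $A$ via disjoint finite sets $F_k\subseteq\omega^{[0]}$, whereas the paper places its markers $M_{\langle i,x\rangle}$ on the least available element of $\ov A_s$ and codes $B$ by a second family of markers $\Gamma_b$ with use $b$ --- but the core mechanism (appoint a fresh witness for $(e,x)$ whenever $\Phi_e^B(x)[s]$ converges, dump it into $A$ when $B$ changes below the recorded use, so the survivors in $W_{f(e)}\cap\ov A$ are in bijection with $W_e^B$) is the same.
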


\begin{proof} Let $B$ be nonlow$_2$.  Then there exists a \ce
  $A\equivT B$ such that $Inf<_T Inf^B\leq_1 Inf (\ov A)$, so $\ov A$
  is not semilow$_2$ (and not semilow$_{1.5}$).
\end{proof}

\begin{proof}[Proof of Theorem~\ref{notlow22}]
  Let $\{\Phi_i\}_{i\in \omega}$ be a listing of all Turing
  functionals.  Let $\{B_s\}_{s\in \omega}$ be a computable
  enumeration of $B$.  We may assume that if
  $\Phi_i^B(x)[s]\converges\neq \Phi_i^B(x)[t]\converges$, then there
  is a stage between $s$ and $t$ such that the computation diverges.
  We may also assume that at each stage $s$, there is at most one pair
  $\<i,x\>$ such that $\Phi^B_i(x)[s]\converges$ and
  $\Phi^B_i(x)[s-1]\diverges$.  The use function of the computation
  $\Phi^B_i(x)[s]$ is denoted $\phi^B_i(x)[s]$ and is the maximal
  element of $B_{i,s}$ seen in the computation.

  {\bf Construction}:

  {\em Stage $s+1$}:

  \emph{Step 1}: If $b\in B_{s+1}-B_s$, enumerate all marked elements
  into $A_{s+1}$ whose markers have uses $u\geq b$.  Whenever an
  element is enumerated into $A$, its marker is removed.

  \emph{Step 2}: If $\Phi_i^B(x)[s+1]\converges$, and there is no
  current marker $M_{\<i,x\>}$, then choose the least element
  $y\in \ov A_s$, $y>s+1$, without a marker and place marker
  $M_{\<i,x\>}$ on it.  The use of this marker is $\phi_i^B(x)[s+1]$.

  \emph{Step 3:} Let $b\notin B_{s+1}$ be the least such that there is
  no current marker $\Gamma_b$.  Place $\Gamma_b$ on the least
  $y\in\ov A_s$, $y>s+1$, without any marker, and let $b$ be the use
  of this marker.
  
  End construction.\\

  \begin{lemma}
    $A\leT B$.
  \end{lemma}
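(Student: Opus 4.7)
The plan is to use $B$ as an oracle to decide membership in $A$, exploiting the fact that each element can only receive a marker at a very early stage of the construction.

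I would begin by recording two structural observations. First, a marker (either $M_{\langle i,x\rangle}$ or $\Gamma_b$) placed on an element $y$ is removed only in Step~1, and only simultaneously with the enumeration of $y$ into $A$; thereafter $y\notin\ov A_s$, so neither Step~2 nor Step~3 can place a new marker on $y$. Hence each $y$ carries at most one marker over the whole construction. Second, Steps~2 and~3 place markers only on elements $y$ satisfying $y>s+1$ at stage $s+1$, so $y$ can receive its (unique) marker only at some stage $s+1\leq y-1$; from stage $y$ onward the marker status of $y$ is frozen forever.

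Combining these with the obvious remark that $y$ enters $A$ only when it carries a marker of use $u$ and some $b\leq u$ enters $B$, I obtain a $B$-computable decision procedure for $A$ as follows. Given $y$, use the oracle $B$ to simulate the construction through stage $y-1$, which is possible because each step only asks bounded questions about $B$. If $y\in A_{y-1}$, output ``yes''. If $y\notin A_{y-1}$ and $y$ carries no marker at the end of stage $y-1$, output ``no'', since by the second observation $y$ will never carry a marker and hence will never enter $A$. Finally, if $y\notin A_{y-1}$ and $y$ carries a marker of use $u$ at the end of stage $y-1$, then $y$ will eventually enter $A$ iff some $b\leq u$ eventually enters $B$, i.e.\ iff $B\spaceres(u+1)\neq B_{y-1}\spaceres(u+1)$, which is a finite $B$-query.

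The only point requiring a small verification is that a marker, once placed, is genuinely immutable unless the element enters $A$ — in particular, that a marker is not silently replaced by a later one of the same name; this is ensured by the ``no current marker'' guards in Steps~2 and~3. I do not anticipate any significant obstacle, since the construction has been arranged precisely so that markers are placed on fresh witnesses larger than the current stage and preserved until they are emptied into $A$, which reduces $A\leq_T B$ to a bounded simulation.
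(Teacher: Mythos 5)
Your proof is correct and takes essentially the same approach as the paper: simulate the (computable) construction through a stage bounded by $y$ to determine whether $y$ ever acquires a marker and with what use, then decide $y\in A$ by a single bounded $B$-query about whether that use is ever injured. One small overstatement worth noting: the marker status of $y$ is not literally ``frozen'' from stage $y$ onward, since the marker can still be removed later when $y$ enters $A$, but your algorithm accounts for this correctly by the final $B$-query, so the argument goes through.
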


  \begin{proof} To determine if $y\in A$, first run the construction
    until stage $y$ to see if $y$ ever has a marker.  If not, then
    $y\notin A$.  If so, then if the marker was added at stage $s$ and
    the use of the marker on $y$ is $u$, ask if $B$ ever changes at or
    below $u$ after stage $s$.  If so, then $y\in A$.  Otherwise
    $y\notin A$.
  \end{proof}

  \begin{lemma}
    $B\leT A$.
  \end{lemma}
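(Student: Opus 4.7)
The plan is to decide membership in $B$ by induction on $b$ using $A$ as oracle, exploiting the $\Gamma_b$ markers as ``delayed witnesses'' whose eventual membership in $A$ reflects whether $b$ enters $B$, once lower-use interference has been ruled out.

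Assume inductively that the $A$-oracle procedure already decides $B\restriction b$. From this I can compute a stage $s_0$ past which $B\restriction b$ is stable, namely the maximum first-enumeration stage of those $b'<b$ known to belong to $B$ (or $s_0=0$ if there are none). Now simulate the construction and wait for the first stage $s_1>s_0$ at which one of two events occurs: either (i) $b\in B_{s_1}$, in which case I output ``$b\in B$'', or (ii) Step 3 places the marker $\Gamma_b$ on some element $y$. To see that one of these must happen when $b\notin B_{s_0}$: after $s_0$ no $b'<b$ enters $B$, so any marker $\Gamma_{b'}$ with $b'<b$ that is ever placed stays placed (its use $b'<b$ is below all subsequent $B$-changes). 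Thus Step 3 exhausts the candidates below $b$ after at most finitely many stages, at which point $b$ becomes the least candidate and $\Gamma_b$ is placed.

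In case (ii) I query the oracle on $y$. The element $y$ carries only $\Gamma_b$ (markers are placed only on elements without a marker), and the sole mechanism moving $y$ into $A$ is Step 1, which is triggered by some $b''\leq b$ entering $B$ at a stage $\geq s_1$. By stability of $B\restriction b$ past $s_0<s_1$, the only such $b''$ is $b$ itself. Hence $y\in A$ if and only if $b\in B$, completing the inductive step and providing the required $A$-oracle procedure to compute $B$.

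The main obstacle is verifying that event (ii) really occurs whenever $b\notin B$; this rests on the combinatorial fact, above, that after $s_0$ the smaller $\Gamma_{b'}$-markers persist and so the Step 3 queue below $b$ drains in finitely many stages. The side-conditions — that $y$ has no other marker that might drag it into $A$ independently, and that the stage $s_0$ is computable from the inductive $A$-oracle information about $B\restriction b$ — are straightforward by inspection of Steps 1--3.
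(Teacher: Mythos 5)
Your proposal is correct and uses essentially the same idea as the paper: the $\Gamma_b$ marker, having use exactly $b$, serves as a witness for $b\in B$, because once $B\restriction b$ has stabilized, an element carrying $\Gamma_b$ is thrown into $A$ if and only if $b$ itself later enters $B$. The paper's version is terser --- it runs the construction, queries $A$ about each successive landing spot of $\Gamma_b$, and halts either when $b$ appears in $B_s$ or when $\Gamma_b$ lands on an element the oracle confirms is outside $A$ --- whereas you first compute the stabilization stage $s_0$ by induction on $b$ and then need only a single oracle query about the one placement of $\Gamma_b$ after $s_0$; the two procedures are interchangeable and your added detail about why Step 3 eventually reaches $b$ is a welcome clarification.
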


\begin{proof}
  Recall that the use of any marker $\Gamma_b$ is $b$.  Thus, once
  $B_s$ has settled up through $b$, no current or future marker
  $\Gamma_b$ will ever enter $A$.  To determine if $b\in B$, run the
  construction until either $\Gamma_b$ is placed on an element in
  $\ov A$, in which case $b\notin B$, or $b$ enters $B_s$.  If
  $b\notin B$, then eventually a marker $\Gamma_b$ will appear in Step
  3 on an element in $\ov A$, as desired.
\end{proof}

\begin{lemma}
  Let $W_{f(i)}=\{y\mid (\exists s)(\exists x) [M_{\<i,x\>}$ is
on $y$ at stage $s]\}.$ $W_i^B$ is infinite if and only if
$W_{f(i)}\cap \ov A$ is infinite.
\end{lemma}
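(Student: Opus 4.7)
My plan is to establish a bijection between $W_i^B$ and $W_{f(i)}\cap \ov A$ via the correspondence $x\mapsto y_x$, where $y_x$ is the unique element carrying marker $M_{\<i,x\>}$ in the limit. The first key observation I will record is that, by Step 1, a marker is ever removed from an element exactly when that element enters $A$; consequently, any element of $W_{f(i)}$ that lies in $\ov A$ still carries, at the limit, some marker $M_{\<i,x\>}$. Also, at most one marker of a given index $\<i,x\>$ is present on the machine at any stage, since Step 2 only places $M_{\<i,x\>}$ when none is current, so distinct elements of $W_{f(i)}\cap \ov A$ correspond to distinct indices $x$.

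For the backward direction I will argue: if $y\in W_{f(i)}\cap \ov A$ carries a marker $M_{\<i,x\>}$ with use $u$ placed at stage $s$, then since $y$ never enters $A$, Step 1 never fires on $y$. Hence no $b\leq u$ ever enters $B$ after stage $s$, so $B\res (u+1)=B_s\res (u+1)$, and the computation $\Phi_i^B(x)$ agrees with its value at stage $s$. In particular $\Phi_i^B(x)\converges$, i.e., $x\in W_i^B$. Since the map $y\mapsto x$ is injective by the uniqueness observation above, this gives $|W_{f(i)}\cap \ov A|\leq |W_i^B|$.

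For the forward direction I will fix $x\in W_i^B$ with final use $u_x$ and choose a stage $s_x$ after which $B\res (u_x+1)$ has settled and $\Phi_i^B(x)[s]$ is permanently at its final value with use $u_x$. If at stage $s_x$ no marker $M_{\<i,x\>}$ is present, Step 2 places one at the next stage on some fresh $y_x$; either way, the marker on $y_x$ has use $u_x$, and since no $b\leq u_x$ subsequently enters $B$, Step 1 never removes it, so $y_x$ remains in $W_{f(i)}\cap \ov A$ forever. As markers for distinct $x$ sit on distinct elements, this yields $|W_{f(i)}\cap \ov A|\geq |W_i^B|$, completing the biconditional.

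The main obstacle I anticipate is verifying cleanly what happens when $\Phi_i^B(x)[s]$ converges infinitely often but with fluctuating uses (so $x\notin W_i^B$): one must check that such an $x$ contributes nothing to $W_{f(i)}\cap \ov A$ in the limit. Here I will use the construction's assumption that between two distinct convergent computations there lies a divergent stage, together with the fact that any divergent stage is forced by a $B$-change below the current use; each such change triggers Step 1 and removes the current marker $M_{\<i,x\>}$ (putting its element into $A$). So any marker that is not permanent is ultimately cashed into $A$, and only the $x\in W_i^B$ survive to contribute to $W_{f(i)}\cap \ov A$.
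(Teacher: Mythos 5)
Your proof is correct and takes essentially the same approach as the paper's: the paper argues the two implications separately (if $W_i^B$ is infinite then each final computation's marker lands permanently in $\ov A$; if $W_i^B$ is finite then for almost all $x$ every marker $M_{\<i,x\>}$ is eventually cashed into $A$ by an injury), which are exactly the two halves of your bijection. The bijection framing is a clean reorganization of the same mechanism — markers survive precisely when their underlying computation is final — so there is no substantive difference in the argument.
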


\begin{proof}
  If $W_i^B$ is infinite, then there exist infinitely many $x$ such
  that $\Phi_i^B(x)\converges$.  For such $x$, for almost all $s$,
  $\Phi_i^B(x)[s]\converges$ and $B_s$ never later changes at or below
  the use $\phi_i^B(x)[s]$.  For each such $x$, $M_{\<i,x\>}$ will be
  undefined before the final computation $\Phi_i^B(x)[s]$ converges,
  because any previous computation would have been injured below the
  use, causing the marker to be removed.  When the final computation
  appears, the marker will be placed by Step 2 on an element in
  $\ov A_s$, and this element will never enter $A$ since nothing
  enters $B$ at or below the use.  Thus for each of these infinitely
  many $x$ values, the final resting place of $M_{\<i,x\>}$ is in
  $\ov A$, and thus $W_{f(i)}\cap \ov A$ is infinite.

  If $W_i^B$ is finite, then for almost all $x$, $\Phi_i^B(x)$
  diverges, which means that for almost all $x$, every computation
  $\Phi_i^B(x)[s]$ that converges is injured below the use.  Thus, for
  almost all $x$, whenever $M_{\<i,x\>}$ is placed on an element, that
  element later enters $A$.  There are only finitely many $x$ values
  where $M_{\<i,x\>}$ is ever on an element that remains outside of
  $A$, and once a marker is placed on an element outside $A$, it
  remains there forever, and there is always only one current
  $M_{\<i,x\>}$.  Thus, only finitely many elements of $W_{f(i)}$ are
  in $\ov A$.
\end{proof}

\end{proof}

\bibliographystyle{plain}
\bibliography{paper}

\end{document}